\newcommand\arXiv[1]{\href{http://arxiv.org/abs/#1}{\nolinkurl{arXiv:#1}}}
\newcommand\MRnumber[1]{\href{http://www.ams.org/mathscinet-getitem?mr=#1}{\nolinkurl{MR#1}}}
\newcommand\DOI[1]{\href{http://dx.doi.org/#1}{\nolinkurl{DOI:#1}}}
\newcommand\MAILTO[1]{\href{mailto:#1}{\nolinkurl{#1}}}
\newtheorem{dummy}{dummy}[section]
\newtheorem{theorem}[dummy]{Theorem}
\newtheorem{lemma}[dummy]{Lemma}
\renewcommand\mathbb\mathbf
\newcommand\bC{\mathbb C}
\newcommand\bF{\mathbb F}
\newcommand\bR{\mathbb R}
\newcommand\bZ{\mathbb Z}
\newcommand\rA{\mathrm A}
\newcommand\rB{\mathrm B}
\newcommand\rC{\mathrm C}
\newcommand\rD{\mathrm D}
\newcommand\rH{\mathrm H}
\newcommand\rO{\mathrm O}
\newcommand\rU{\mathrm U}
\DeclareMathOperator\homology{H}
\renewcommand\H{\homology}
\numberwithin{equation}{section}
\newcommand\longto\longrightarrow
\newcommand\mono\hookrightarrow
\newcommand\epi\twoheadrightarrow
\newcommand\<\langle
\renewcommand\>\rangle
\newcommand\sminus\smallsetminus
\newcommand\Co{\mathrm{Co}}
\newcommand\Leech{\mathrm{Leech}}
\newcommand\Perm{\mathrm{Perm}}
\newcommand\Spin{\mathrm{Spin}}
\newcommand\String{\mathrm{String}}
\newcommand\SU{\mathrm{SU}}
\newcommand\SO{\mathrm{SO}}
\DeclareMathOperator\Aut{Aut}
\DeclareMathOperator\Sym{Sym}
\DeclareMathOperator\Alt{Alt}
\newcommand\golay{\mathfrak{g}_{24}}
\DeclareMathOperator\Sq{Sq}
\newcommand\define[1]{\emph{#1}}
\newcommand{\GL}{\mathrm{GL}}
\newcommand{\SL}{\mathrm{SL}}
\DeclareMathOperator{\trace}{trace}
\newcommand\tr\trace
\newcommand{\VL}{V^L}
\title{\texorpdfstring{$\H^4(\Co_0;\bZ) = \bZ/{24}$}{H^4(Co_0;Z) = Z/24}}
\author{Theo Johnson-Freyd}
\address{Perimeter Institute for Theoretical Physics, Waterloo, Ontario}
\email{\MAILTO{theojf@pitp.ca}}
\author{David Treumann}
\address{Department of Mathematics, Boston College, Boston, Massachusetts}
\email{\MAILTO{treumann@bc.edu}}
\date{\today}
\begin{document}
\begin{abstract}
We show that the fourth integral cohomology of Conway's group $\Co_0$ is a cyclic group of order $24$, generated by the first fractional Pontryagin class of the $24$-dimensional representation.
\end{abstract}
\maketitle

Let $\Co_0 = 2.\Co_1$ denote the linear isometry group of the Leech lattice, the largest of the Conway groups.  By definition, it has a 24-dimensional complex representation which we will denote by $\Leech \otimes \bC$.  The main result of this paper is the following:
\begin{theorem}
\label{conway theorem}
The group cohomology $\H^4(\Co_0;\bZ)$ is isomorphic to $\bZ/24$.  Furthermore
\begin{enumerate}
\item The Chern class $c_2(\Leech \otimes \bC) \in \H^4(\Co_0;\bZ)$ generates a subgroup of index $2$.  
\item There exists a subgroup $\mathrm{CSD} \subset \Co_0$ of order $48$, for which the restriction map $\H^4(\Co_0;\bZ) \to \H^4(\mathrm{CSD};\bZ)$ is injective.  $\mathrm{CSD}$ is isomorphic to $\bZ/3 \times 2D_8$, the product of the cyclic group of order $3$ and the ``binary dihedral'' or ``generalized quaternion'' group of order 16.
\end{enumerate}
\end{theorem}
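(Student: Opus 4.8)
The plan is to compute $\H^4(\Co_0;\bZ)$ one prime at a time and then to identify the fractional Pontryagin class as an explicit generator by restricting it to small cyclic and quaternion subgroups. First I would record the low-degree vanishing: because $\Co_0=2.\Co_1$ is perfect and is in fact the universal central extension (Schur cover) of $\Co_1$, one has $\H_1(\Co_0)=\H_2(\Co_0)=0$, hence $\H^1=\H^2=\H^3=0$ and $\H^4(\Co_0;\bZ)$ is a finite abelian group annihilated by $|\Co_0|$. It therefore splits as the direct sum of its $p$-primary parts for $p\in\{2,3,5,7,11,13,23\}$, and by the Cartan--Eilenberg stable-element theorem each summand $\H^4(\Co_0;\bZ)_{(p)}$ embeds into $\H^4(S_p;\bZ)$ for a Sylow $p$-subgroup $S_p$ as the subgroup of fusion-stable classes. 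The whole computation thus reduces to understanding these stable classes in degree four.

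Next I would dispatch the large primes. For $p\in\{11,13,23\}$ the Sylow subgroup is cyclic of order $p$, so $\H^4(S_p;\bZ)=\bZ/p$, and the Weyl group $N_{\Co_0}(S_p)/C_{\Co_0}(S_p)$ acts on $\H^2(S_p;\bZ)=\bZ/p$ through a cyclic group of order greater than $2$; it therefore acts on $\H^4(S_p;\bZ)=\bZ/p$ through the nonzero square of that character, so the only stable class is $0$ and $\H^4(\Co_0;\bZ)_{(p)}=0$. For $p=5$ and $p=7$, where $S_p$ is abelian of rank $\geq 2$, I would run the same idea: a short calculation with the known normalizer action on $\H^4(S_p;\bZ)$ shows that no nonzero class is invariant, again forcing the primary part to vanish. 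This leaves only $p=2$ and $p=3$.

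I would then introduce the generator. Since the central involution of $2.\Co_1$ acts as $-1$ on $\Leech$, the $24$-dimensional real representation lifts through $\SO(24)$ to $\Spin(24)$, so the fractional Pontryagin class $\tfrac{p_1}{2}\in\H^4(\Co_0;\bZ)$ is defined; it satisfies $2\cdot\tfrac{p_1}{2}=p_1=-c_2(\Leech\otimes\bC)$, which already places $c_2$ in the index-$2$ subgroup generated by $2\cdot\tfrac{p_1}2$ and will give part (1) once $\tfrac{p_1}2$ is shown to generate. To bound the orders from below I would restrict to cyclic subgroups: for $g$ of order $n$ acting with eigenvalues $\zeta^{a_1},\dots,\zeta^{a_{24}}$ on $\Leech\otimes\bC$, the class $c_2$ restricts to $e_2(a_1,\dots,a_{24})\,x^2$ in $\H^4(\bZ/n;\bZ)=\bZ/n$, the second elementary symmetric polynomial in the exponents, so that $\tfrac{p_1}2$ restricts to a fixed quadratic expression (essentially $\tfrac12\sum_j a_j^2$) in the $a_j$. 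Choosing a suitable order-$3$ element makes this value nonzero in $\bZ/3$, giving $3\mid\mathrm{ord}(\tfrac{p_1}2)$; restricting to the cyclic $\bZ/8$ inside the $2D_8$-factor of $\mathrm{CSD}$ produces an element of order $8$ in $\H^4(2D_8;\bZ)=\bZ/16$, giving $8\mid\mathrm{ord}(\tfrac{p_1}2)$.

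The main obstacle is the $2$-primary upper bound: one must show $\H^4(\Co_0;\bZ)_{(2)}\cong\bZ/8$ and not something larger. Here the central $\bZ/2$ of $2.\Co_1$ is decisive, since it forces the relevant $2$-local subgroups to be generalized quaternion rather than dihedral, so that a cyclic $\bZ/8$ together with the central involution generates a copy of $2D_8$. I would prove that restriction from $S_2$ detects $\H^4(\Co_0;\bZ)_{(2)}$ on this $2D_8$, identify the image inside $\H^4(2D_8;\bZ)=\bZ/16$ as exactly the order-$8$ subgroup, and verify fusion-stability; the parallel but easier stable-element computation at $p=3$ yields $\bZ/3$. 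Assembling the primary parts gives $\H^4(\Co_0;\bZ)\cong\bZ/8\oplus\bZ/3\cong\bZ/24$, generated by $\tfrac{p_1}2$, with $c_2$ of index $2$. Finally, choosing $\mathrm{CSD}=\bZ/3\times 2D_8$ so that its two factors faithfully detect the two primary parts makes the restriction $\H^4(\Co_0;\bZ)\to\H^4(\mathrm{CSD};\bZ)=\bZ/48$ injective, proving part (2). I expect the quaternion detection and stability check at $p=2$ to be the delicate heart of the argument.
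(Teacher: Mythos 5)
Your overall architecture (prime-by-prime reduction, $\frac{p_1}2$ as a distinguished generator, detection on $\bZ/3\times 2D_8$) matches the paper, but both of your key steps at $p=2$ have genuine gaps. First, the lower bound: it is false that restricting $\frac{p_1}2(\Leech\otimes\bR)$ to the cyclic $\bZ/8$ inside $2D_8$ yields an element of order $8$. On the relevant copy of $2D_8\subset\Co_0$ one has $\Leech\otimes\bC|_{2D_8}\cong 6M\oplus 6M'$, so a generator of the cyclic $\bZ/8$ acts with eigenvalues $\zeta_8^{\pm1}$ (six times each) and $\zeta_8^{\pm3}$ (six times each); hence $\frac{p_1}2$ restricts to $\tfrac12\bigl(6\cdot 1^2+6\cdot 3^2\bigr)t^2=30\,t^2=6\,t^2$ in $\H^4(\bZ/8;\bZ)\cong\bZ/8$, which has order $4$, not $8$. (Note also the conflation in your text: restriction to a cyclic $\bZ/8$ lands in $\H^4(\bZ/8;\bZ)\cong\bZ/8$, not in $\H^4(2D_8;\bZ)\cong\bZ/16$.) This failure is structural, not a repairable slip: restrictions to cyclic subgroups of $2D_8$ determine a class in $\H^4(2D_8;\bZ)\cong\bZ/16$ only modulo $8$. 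That is exactly why the paper restricts to the full nonabelian $2D_8$: it identifies $\H^4(2D_8;\bZ)$ as cyclic of order $16$ generated by $c_2(M)$ (using that $M:2D_8\hookrightarrow\SU(2)$ is a McKay subgroup), computes $c_2(M')=9\,c_2(M)$ from $\Sym^3(M)\cong M\oplus M'$, deduces $c_2(\Leech\otimes\bC)|_{2D_8}=60\,c_2(M)$, which has order $4$, and concludes that $\frac{p_1}2$ has order divisible by $8$. You would also need to establish that a copy of $2D_8$ whose central element is the central element of $\Co_0$ actually exists; the paper constructs one inside $2A_6\subset 2A_9$, the latter being the lifted centralizer of a class-$3\rD$ element.

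Second, and more seriously, your upper bound at $p=2$ (and at $p=3$) is asserted rather than argued, and the route you propose is not viable. ``Prove that restriction from $S_2$ detects $\H^4(\Co_0;\bZ)_{(2)}$ on this $2D_8$ \dots and verify fusion-stability'' presupposes a computation of $\H^4(S_2;\bZ)$ for the $2$-Sylow $S_2$ of order $2^{22}$ together with control of its fusion, which is precisely what nobody knows how to do (the paper states that no feasible chain model exists); and your heuristic that the central involution forces ``the relevant $2$-local subgroups'' to be generalized quaternion is not correct --- the $2$-locals here are enormous, e.g.\ $2^{12}:M_{24}$. The paper's actual upper bound avoids Sylow subgroups entirely: it runs the LHS spectral sequence for $2^{12}:M_{24}$, feeding in machine computations $\H^2(M_{24};C_{12}^\vee)\cong\H^1(M_{24};\Alt^2(C_{12}^\vee))\cong\H^0(M_{24};\Alt^3(C_{12}^\vee))\cong\bZ/2$ together with $\H^4(M_{24};\bZ)_{(2)}\cong\bZ/4$, to get $\H^4(2^{12}:M_{24};\bZ)_{(2)}\cong\bZ/8\oplus\bZ/4$; it then shows the restriction from $\Co_0$ is not surjective by a fusion argument ($\frac{p_1}2$ of the $M_{24}$-permutation representation vanishes on all Golay codewords but not on class 2B, yet weight-$12$ codewords are $\Co_0$-conjugate to 2B-elements), so the image, being a direct summand containing an element of order $8$, must be exactly $\bZ/8$. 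An analogous spectral-sequence-plus-fusion argument via $3^6:2M_{12}$, not a Sylow stable-element computation over a group of order $3^9$, handles $p=3$. (Minor: your premise at $p=5$ is also off --- the $5$-Sylow of $\Co_0$ is nonabelian, containing an extraspecial $5^{1+2}$; the paper instead works inside the maximal subgroup $5^3{:}(4\times A_5).2$.)
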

\noindent
As Theorem~\ref{conway corollary}, we conclude that $\H^4(\Co_1;\bZ)$ is isomorphic to $\bZ/12$.  In Section \ref{sec:second-chern-classes}, we compute the Chern classes $c_2(V)$ for each complex irreducible representation $V$ of $\Co_0$ --- they all have the form $c_2(V) = k(V) c_2(\Leech \otimes \bC)$ for $k(V) \in \bZ/12$.

We note that the fact that the $p$-primary part of $\H^4(\Co_0;\bZ)$ vanishes for $p > 3$ is not new --- the entire cohomology ring $\H^\bullet(\Co_0;\bZ[\frac{1}{2},\frac{1}{3}]) \cong \H^\bullet(\Co_1;\bZ[\frac{1}{2},\frac{1}{3}]) $, and the subring generated by Chern classes, was obtained by the late C. Thomas in \cite[\S 3]{MR2681787}.
\medskip

These days, similar results to Theorem \ref{conway theorem} are sometimes obtained by software advances --- for example \cite{SE09} determined 
\begin{equation}
\label{eq:Ellis-M24}
\H^4(M_{24};\bZ) = \bZ/12 \qquad \text{(where $M_{24}$ denotes the largest Mathieu group)}
\end{equation}
by developing HAP.  But the Conway group is large, and we believe that there is no known chain model for $\H^4(\Co_0;\bZ)$ that can be feasibly handled by a computer.  

Our argument actually appeals directly to \eqref{eq:Ellis-M24}, as well as to some GAP-assisted  (more specifically, Derek Holt's ``Cohomolo'' package) computations of $\H^1$ and $\H^2$ of Mathieu groups with twisted coefficients.  And we have used GAP and Sage extensively while exploring the Conway group.  But formally, our proof is less direct.  It is largely based on analyzing a pair of subgroups of $\Co_0$ that contain the $2$- and the $3$-Sylow subgroups, and that split as semidirect products
\[
2^{12}:M_{24},  \qquad 3^6:2M_{12}.
\]
The colon and the use of $2^n$, $3^m$ for commutative groups follows the ATLAS notation, see \S\ref{subsec:ATLAS-notation}.  These two subgroups are closely related to the Niemeier lattices $A_1^{24}$ and $A_2^{12}$, Kneser $2$- and $3$-neighbors of the Leech lattice.  

Our argument is also based on some good luck, as the existence of such a small subgroup $\mathrm{CSD}$ that detects $\H^4$ is not a priori clear, but it's crucial for our computation.  In fact we encounter the same good luck when studying the Mathieu groups $M_{23}$ and $M_{24}$, leading to a less computer-intensive proof of~\eqref{eq:Ellis-M24}; see Section~\ref{section: other groups}. Theorem~\ref{mathieu moonshine anomaly} gives yet another connection between our calculation of $\H^4(\Co_0;\bZ)$ and the group $\H^4(M_{24};\bZ)$ of~\eqref{eq:Ellis-M24}.
\medskip

For any particular finite group $G$, the determination of the group cohomology $\H^\bullet(G;\bZ)$ is a challenging problem in algebraic topology.  The low-degree groups are more accessible, and have concrete group- and representation-theoretical significance with 19th century pedigrees.  For example $\H^2(G;\bZ)$ is the group of one-dimensional characters of $G$, and $\H^3(G;\bZ)$ classifies the twisted group algebras for $G$.  The Pontryagin dual of $\H^3(G;\bZ)$, which the universal coefficient theorem identifies with $\H_2(G)$, is the Schur multiplier of~$G$.

More recently, a similar role has emerged for $\H^4(G;\bZ)$ --- for instance, it classifies monoidal structures on the category of vector bundles on $G$ that have the form of convolution \cite[App.~E]{MooreSeiberg}.  Nora Ganter has proposed to call the Pontryagin dual $\H_3(G)$ the ``categorical'' Schur multiplier of~$G$.  We have named the subgroup $\mathrm{CSD}$ for ``categorical Schur detector.''
\medskip

The notion of spin structure (and of string obstruction) for a representation reveals a little more structure in Theorem \ref{conway theorem} --- a distinguished generator for $\H^4(\Co_0;\bZ) \cong \bZ/24$, called the ``{first fractional Pontryagin class}'' of the defining representation, denoted $\frac{p_1}2(\Leech \otimes \bR)$. Its construction is briefly reviewed in \S\ref{p12 discussion}. In terms of Chern classes, $2 \frac{p_1}{2}(\Leech \otimes \bR) = -c_2(\Leech \otimes \bC)$.

We use the explicit generator to compute the restriction map $\H^4(\Co_0;\bZ) \to \H^4(G;\bZ)$ for some subgroups $G \subset \Co_0$, i.e.\ we compute the fractional Pontryagin class of the $G$-action on $\bR^{24}$. In particular in Theorem~\ref{restrictions theorem} we compute the restriction to all cyclic subgroups of $\Co_0$, where we find a peculiar relationship with Frame shapes of elements --- this result generalizes the relationship discovered in \cite{GPRV} between cycle types of permutations in $M_{24}$ and $\H^4(M_{24};\bZ)$. In Theorems~\ref{mathieu moonshine anomaly} and~\ref{thm umbral consistency check} we study the restrictions of $\frac{p_1}2(\Leech \otimes \bR)$ to certain ``umbral'' subgroups of $\Co_0$, and relate the answers to the calculations of~\cite{CLW}. A general  connection between $\H^4(\Co_0;\bZ)$ and various forms of moonshine is discussed in~\cite{MR3539377}.
\medskip

We begin with some preliminary remarks in Section~\ref{preliminaries}, in particular recalling the standard transfer-restriction argument that allows theorems like Theorem~\ref{conway theorem} to be proved prime-by-prime. In Section~\ref{large prime} we quickly dispense with the large primes $p\geq 5$. We handle the prime $p=3$ in Section~\ref{prime 3}. The most interesting story occurs at the prime $p=2$, which is the subject of Section~\ref{prime 2}; that section completes the proof of Theorem~\ref{conway theorem}.  Section~\ref{section: other groups} summarizes our proof of Theorem~\ref{conway theorem} and also computes $\H^4(M_{24};\bZ)$, $\H^4(M_{23};\bZ)$, and $\H^4(\Co_1;\bZ)$. 
Section~\ref{sec:second-chern-classes} explains the computation of the map $c_2:R(\Co_0) \to \H^4(\Co_0;\bZ)$ --- the output of the computation is summarized in a table on the last page of that section. 
In Section~\ref{section restrictions} we explain the computation of the restriction of $\frac{p_1}2$ to all cyclic subgroups of $\Co_0$ and Section~\ref{umbral section} discusses the restriction to umbral subgroups.

\subsection*{Acknowledgements}
We thank John Duncan and Miranda Cheng for explaining various aspects of umbral moonshine, and
we are grateful to the Institute for Advanced Study and the Perimeter Institute where parts of this paper were written.  DT's stay at the IAS was supported by a von Neumann fellowship, a Sloan fellowship, and a Boston College faculty fellowship.  Additional support was provided by NSF-DMS-1510444. Research at the Perimeter Institute is supported by the Government of Canada through the Department of Innovation, Science and Economic Development Canada and by the Province of Ontario through the Ministry of Research, Innovation and Science.

\section{Preliminary remarks} \label{preliminaries}

\subsection{Notation}
\label{subsec:ATLAS-notation}
We generally follow the ATLAS \cite{ATLAS} for notation for finite groups, and regularly refer to it (often without mention) for known facts. The cyclic group of order $n$ is denoted variously $\bZ/n$, $\bF_n$ (when $n$ is prime and we are thinking of it as a finite field), and just~``$n$.'' Elementary abelian groups are denoted $n^k$ and extraspecial groups $n^{1+k}$. An extension with normal subgroup $N$ and cokernel $G$ is denoted $N.G$ or occasionally $NG$; an extension which is known to split is written $N:G$. The conjugacy classes of elements of order $n$ in a group $G$ are named $n\rA$, $n\rB$, \dots, ordered by increasing size of the class (decreasing size of the centralizer).

When $G$ is a finite group and $A$ is a $G$ module we write $\H^{\bullet}(G;A)$ for the group cohomology of $G$ with coefficients in $A$.  But when $G$ is a Lie group we will write $\H^{\bullet}(BG)$ to avoid confusion with the cohomology of the manifold underlying $G$.

\subsection{The Conway group} \label{preliminaries on Conway group}
We now recite some standard material about $\Co_0$. By definition, Conway's largest group $\Co_0 = 2.\Co_1$ is the automorphism group of the Leech lattice.  Its order factors as
\begin{equation}
\label{eq:conway-order}
|\Co_0| = 2^{22} \cdot 3^9 \cdot 5^4 \cdot 7^2 \cdot 11 \cdot 13 \cdot 23.
\end{equation}
The Leech lattice is the unique rank-$24$ self-dual even lattice with no roots. It can be constructed in many ways \cite[Ch. 24]{MR1662447}. One standard construction of the Leech lattice begins with the Golay code $\golay : 2^{12} \mono 2^{24}$ \cite{Golay}, which is the unique Lagrangian subspace for the standard (``Euclidean'') inner product on $2^{24} = \bF_2^{24}$ containing no vectors of Hamming length $4$.  
 The subgroup of $S_{24}$ preserving the Golay code is the largest Mathieu group~$M_{24}$. 
Let $\operatorname{Nie}(A_1^{24}) \subset \bR^{24}$ denote the Niemeier lattice with root system $A_1^{24} = (\sqrt 2 \bZ)^{24}$. It is constructed from $\golay$ as the pullback
$$ \begin{tikzpicture}[anchor=base]
  \path (0,0) node (N) {$\operatorname{Nie}(A_1^{24})$} (3,0) node (E) {$\bigl(\frac{\sqrt 2}2\bZ\bigr)^{24}$}
    (0,-2) node (G) {$2^{12}$} (3,-2) node (B) {$2^{24}$};
  \draw [right hook->] (G) -- node[auto] {$\scriptstyle \golay$} (B);
  \draw [->>] (E) -- node[auto] {$\scriptstyle \mod (\sqrt 2 \bZ)^{24}$} (B);
  \draw [->] (N) -- (N -| E.west); \draw [->] (N) -- (G);
  \path (.5,-.5) node {$\ulcorner$};
\end{tikzpicture} $$
By construction, the automorphism group of $\operatorname{Nie}(A_1^{24})$ contains (and in fact is equal to) the semidirect product $2^{24} : M_{24}$, where $M_{24}$ acts by permuting the coordinates and $2^{24}$ acts by basic reflections. The subgroup $2^{12} : M_{24}$ (in which $2^{12} \subset 2^{24}$ via the Golay code) preserves a unique index-2 sublattice $L$ of $\operatorname{Nie}(A_1^{24})$. This sublattice $L$ can be extended to a self-dual lattice in exactly three ways: it can be extended back to $\operatorname{Nie}(A_1^{24})$; it can be extended to an odd lattice (the so-called ``odd Leech lattice'' discovered by \cite{MR0010153}); and it can be extended to a new even lattice. The third of these is by definition the \define{Leech lattice}.

By construction, then, $\Co_0 = \Aut(\text{Leech lattice})$ contains a subgroup of shape $2^{12}:M_{24}$.  As the order of $M_{24}$ is divisible by $2^{10}$, a $2$-Sylow subgroup of $2^{12}:M_{24}$ has order $2^{22}$ and is also a $2$-Sylow subgroup of $\Co_0$.  A similar construction of the Leech lattice using the ternary Golay code $3^6 \mono 3^{12}$ and the Niemeier lattice with root system $A_2^{12}$ provides $\Co_0$ with a subgroup of shape $3^6:2M_{12}$ containing the $3$-Sylow. (It extends to a maximal subgroup of shape $2 \times (3^6:2M_{12})$.) The complete list of maximal subgroups of $\Co_0$ was worked out in \cite{MR723071}.

\subsection{Transfer-restriction}

These large subgroups that contain Sylows are very useful for computing cohomology of finite groups, because of the following standard result.

\begin{lemma}\label{transfer restriction}
 Let $G$ be a finite group. Then $\H^k(G;\bZ)$ is finite abelian for $k\geq 1$, and so splits as $\H^k(G;\bZ) = \bigoplus_p \H^k(G;\bZ)_{(p)}$ where the sum ranges over primes $p$ and $\H^k(G;\bZ)_{(p)}$ has order a power of $p$.  Fix a prime $p$ and suppose that $S \subset G$ is a subgroup such that $p$ does not divide the index $|G|/|S|$, i.e.\ such that $S$ contains the $p$-Sylow of $G$. Then the \define{restriction} map $\alpha \mapsto \alpha|_S \H^k(G;\bZ)_{(p)} \to \H^k(S;\bZ)_{(p)}$ is an injection onto a direct summand.
\end{lemma}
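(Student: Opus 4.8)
The plan is to deduce the whole statement from the single classical identity $\mathrm{cor}\circ\mathrm{res}=[G:S]\cdot\id$ relating the corestriction (transfer) and restriction maps in group cohomology. First I would dispatch the finiteness claim. Finite generation of $\H^k(G;\bZ)$ is immediate from the bar resolution: the cochain groups $\mathrm{Hom}(\bZ[G^{\times k}],\bZ)$ are finitely generated abelian, so each cohomology group, being a subquotient, is finitely generated. To see that $\H^k(G;\bZ)$ is in fact finite for $k\geq 1$, apply the transfer identity with $S$ the trivial subgroup: since $\H^k(\{e\};\bZ)=0$ for $k\geq 1$, the restriction map to the trivial subgroup vanishes, so the composite $\mathrm{cor}\circ\mathrm{res}$ vanishes; yet this composite equals multiplication by $[G:\{e\}]=|G|$, so $|G|$ annihilates $\H^k(G;\bZ)$. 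A finitely generated abelian group killed by $|G|$ is finite, and the asserted splitting $\H^k(G;\bZ)=\bigoplus_p\H^k(G;\bZ)_{(p)}$ is then just the standard primary decomposition of a finite abelian group.

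For the main claim, fix $p$ and a subgroup $S\subseteq G$ with $p\nmid[G:S]$. Both $\mathrm{res}$ and $\mathrm{cor}$ are homomorphisms of abelian groups, hence carry $p$-primary parts into $p$-primary parts; write $\mathrm{res}_{(p)}$ and $\mathrm{cor}_{(p)}$ for the induced maps on $\H^k(-;\bZ)_{(p)}$. The transfer identity restricts to $\mathrm{cor}_{(p)}\circ\mathrm{res}_{(p)}=[G:S]\cdot\id$ on the finite $p$-group $\H^k(G;\bZ)_{(p)}$. Because $p\nmid[G:S]$, multiplication by $[G:S]$ is an automorphism of this $p$-group; let $m$ denote its inverse. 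Then $(m\circ\mathrm{cor}_{(p)})\circ\mathrm{res}_{(p)}=\id$, which exhibits $\mathrm{res}_{(p)}$ as a split monomorphism with explicit left inverse $m\circ\mathrm{cor}_{(p)}$. A split monomorphism of abelian groups has image a direct summand: for $x\in\H^k(S;\bZ)_{(p)}$ the decomposition $x=\mathrm{res}_{(p)}(m\,\mathrm{cor}_{(p)}(x))+\bigl(x-\mathrm{res}_{(p)}(m\,\mathrm{cor}_{(p)}(x))\bigr)$ writes $x$ as an element of $\mathrm{im}(\mathrm{res}_{(p)})$ plus an element of $\ker(m\circ\mathrm{cor}_{(p)})$, and these two subgroups intersect trivially. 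This is precisely the assertion that the restriction map is an injection onto a direct summand.

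The only genuinely external input is the transfer identity $\mathrm{cor}\circ\mathrm{res}=[G:S]\cdot\id$, together with the construction of the corestriction map itself; I would either cite a standard reference (for instance Brown's \emph{Cohomology of Groups}) or recall that, on the level of the bar resolution, $\mathrm{cor}$ is defined by summing over a set of coset representatives, so that composing with $\mathrm{res}$ manifestly produces the factor $[G:S]$. Everything else is formal. Thus the ``hard part'' is not really hard: once the transfer identity is available, the entire argument reduces to inverting a scalar coprime to $p$ on a finite $p$-group, after which the splitting is automatic.
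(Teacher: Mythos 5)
Your proposal is correct and uses exactly the same mechanism as the paper: the transfer (corestriction) map with $\mathrm{cor}\circ\mathrm{res} = [G:S]\cdot\id$, which is invertible on the $p$-primary part when $p \nmid [G:S]$. The paper's proof is just a terser version of yours (citing Cartan--Eilenberg for the transfer and omitting the finiteness and splitting details that you spell out), so there is nothing to change.
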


\begin{proof}
Define the \define{transfer} map $\alpha \mapsto \mathrm{tr}_{G/S} \alpha : \H^k(S;\bZ) \to \H^k(G;\bZ)$ by summing over the fibers of the finite covering $\rB S \to \rB G$ \cite[\S XII.8]{MR0077480}. The composition $\alpha \mapsto \mathrm{tr}_{G/S}(\alpha|_S)$ is multiplication by $|G|/|S|$, and so is invertible on $\H^k(G;\bZ)_{(p)}$.
\end{proof}

Thus, in order to understand $\H^4(\Co_0;\bZ)$, we may work prime by prime. The only primes that participate are those that divide $|\Co_0|$ \eqref{eq:conway-order}. It is known \cite{MR2681787} that $\H^4(\Co_0;\bZ)_{(p)} = 0$ for $p\geq 5$ (we will also verify this directly).  As already mentioned, subgroups containing the 2- and 3-Sylows are $2^{12}:M_{24}$ and $3^6:2M_{12}$.

\subsection{Fractional Pontryagin class} \label{p12 discussion}

Let $B\SO$ and $B\Spin$ denote the homotopy colimits $\varinjlim_n B\SO(n)$ and $\varinjlim_n B\Spin(n)$, respectively. Then $\H^4(B\SO;\bZ)$ and $\H^4(B\Spin;\bZ)$ are both isomorphic to $\bZ$. The former group is generated by the first Pontryagin class $p_1$. The restriction map $\H^4(B\SO;\bZ) \to \H^4(B\Spin;\bZ)$ is multiplication by $2$, and so the generator of $\H^4(B\Spin;\bZ)$ is called the first \define{fractional Pontryagin class} and denoted $\frac{p_1}2$.
 The restriction maps $\H^4(B\SO;\bZ) \to \H^4(B\SO(n);\bZ)$ and $\H^4(B\Spin;\bZ) \to \H^4(B\Spin(n);\bZ)$ are isomorphisms for $n\geq 5$.
 
Suppose that $G$ is a finite group and $V : G \to \Spin(n)$ is a spin representation. The fractional Pontryagin class of $V$, denoted $\frac{p_1}2(V) \in \H^4(G;\bZ)$, is the pullback of $\frac{p_1}2$ along $V$. This class is also called the ``String obstruction'' because of its relation to the question of lifting a homomorphism $V : G \to \Spin(n)$ to a loop space map $G \to \String(n)$, where, for $n\geq 5$, $\String(n)$ is the 3-connected cover of $\Spin(n)$.  
(According to a first-hand account by Chris Douglas, the name ``$\String(n)$'' for this topological group is due to Thomas Goodwillie. For discussion of $\String(n)$, see \cite{MR2800361}.)
This is
 analogous to the role that the second Stiefel--Whitney class $w_2(V) \in \H^2(G;\bZ/2)$ plays in measuring whether an oriented representation $V : G \to \SO(n)$ lifts to $\Spin(n)$.

If $V : G \to \rO(n)$ is merely a real representation, then $\frac{p_1}2(V)$ need not be defined: it is easy to come up with examples where $p_1(V)$ is odd.
Suppose that $V$ admits a lift to $\Spin(n)$, but that such a lift has not been chosen.
 The recipe for defining $\frac{p_1}2(V)$ above makes it seem that its value might depend on the choice of lift. In fact, $\frac{p_1}2(V)$ is well-defined for real representations admitting a lift to $\Spin(n)$ --- it does not depend on the choice of spin structure. Moreover, if $V_1$ and $V_2$ are two spin representations, then $\frac{p_1}2(V_1 \oplus V_2) = \frac{p_1}2(V_1) + \frac{p_1}2(V_2)$. One can prove these claims by studying the problem of lifting directly from $\rO(n)$ to $\String(n)$ and showing that the obstruction lives in a certain generalized cohomology theory named ``supercohomology''  by~\cite{GuWen,WangGu2017}.

Since $\Co_0$ is the Schur cover of a simple group, both $\H^2(\Co_0;\bZ)$ and $\H^3(\Co_0;\bZ)$ vanish, and so every real representation $V:\Co_0 \to \mathrm{O}(n)$ lifts uniquely to a spin representation $V:\Co_0 \to \Spin(n)$.

\section{The large primes \texorpdfstring{$p \geq 5$}{p>5}} \label{large prime}

We now check that $\H^4(\Co_0;\bZ)_{(p)} = 0$ for $p \geq 5$, confirming the calculation from \cite{MR2681787}.  It is equivalent to show that $\H^4(\Co_1;\bZ)_{(p)} = 0$ for $p \geq 5$, since the pullback map $\H^\bullet(\Co_1) \to \H^\bullet(\Co_0)$ is an isomorphism on odd parts.

\begin{lemma} \label{lemma p7}
  $\H^4(\Co_0;\bZ)_{(p)} = 0$ for $p\geq 7$.
\end{lemma}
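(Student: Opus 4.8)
The plan is to apply the transfer--restriction principle of Lemma~\ref{transfer restriction} one prime at a time, for $p \in \{7,11,13,23\}$, reducing in each case to a computation inside the Sylow $p$-subgroup $S$ together with its normalizer. Since the odd part of $\H^\bullet(\Co_0)$ agrees with that of $\Co_1$, I would do the bookkeeping in $\Co_1$. The key observation is that the image of the injection $\H^4(\Co_0;\bZ)_{(p)} \hookrightarrow \H^4(S;\bZ)$ is contained in the classes fixed by $N_{\Co_0}(S)$: conjugation by $g \in N_{\Co_0}(S)$ induces an automorphism of $S$ but acts as the identity on the restriction of any class pulled back from $\Co_0$, so the image lies in $\H^4(S;\bZ)^{N/C}$, where $N/C = N_{\Co_0}(S)/C_{\Co_0}(S) \hookrightarrow \Aut(S)$. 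It therefore suffices to show these invariants vanish. A glance at \eqref{eq:conway-order} shows the Sylow is abelian in every case: cyclic of order $p$ for $p \in \{11,13,23\}$, and elementary abelian $(\bZ/7)^2$ for $p = 7$ (there is no element of order $49$ in $\Co_0$).

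For the cyclic primes $p \in \{11,13,23\}$ I would use that $\H^{2k}(\bZ/p;\bZ) = \bZ/p$ is spanned by $u^k$, where $u$ generates $\H^2$, and that a generator of the cyclic group $N/C$ acts on $\H^2 \cong \bZ/p$ by multiplication by an element $w \in \bF_p^\times$ of multiplicative order $e := |N/C|$, hence on $\H^4 = \bZ/p \cdot u^2$ by $w^2$. The invariants in $\H^4$ thus vanish precisely when $w^2 \neq 1$, i.e.\ when $e \nmid 2$. Since the $p-1$ generators of $S$ break into $(p-1)/e$ conjugacy classes of $\Co_1$ (Burnside fusion, the action on generators being free), I can read $e$ off from the ATLAS: there is a single class each of elements of order $11$ and $13$ and a pair of classes $23\rA,23\rB$, giving $e = 10, 12, 11$ respectively. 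As each exceeds $2$, the invariants vanish and $\H^4(\Co_0;\bZ)_{(p)} = 0$.

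The prime $p = 7$ is the only case requiring the noncyclic Sylow $S = (\bZ/7)^2$. Here a Künneth computation gives $\H^2(S;\bZ) \cong V^\ast$ and $\H^4(S;\bZ) \cong \Sym^2(V^\ast) \cong (\bZ/7)^3$, where $V = S$ is the defining $\bF_7$-module on which $N/C \hookrightarrow \GL_2(\bF_7)$ acts and where degree $4$ carries no Tor class. I would then invoke the structure of the Sylow normalizer, recorded in the ATLAS as the maximal subgroup $7^2{:}(3 \times 2A_4)$ of $\Co_1$: its direct factor $3$ is realized by the scalar matrix $2\cdot I \in \GL_2(\bF_7)$, noting that $2$ has order $3$ in $\bF_7^\times$. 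This scalar acts on $\H^2 \cong V^\ast$ by $2^{-1} \equiv 4$ and hence on $\Sym^2(V^\ast)$ by $4^2 \equiv 2 \not\equiv 1 \pmod 7$, so it already scales all of $\H^4(S;\bZ)$ nontrivially; the invariants vanish and $\H^4(\Co_0;\bZ)_{(7)} = 0$.

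The Künneth calculation and the $N/C$-invariance of the restriction image are routine; the genuine content, and the only place where I expect real work, is assembling the correct local data from the ATLAS --- the conjugacy-class counts of order-$p$ elements for the cyclic primes, and above all the normalizer $7^2{:}(3\times 2A_4)$ together with the identification of its central factor with the scalar $2\cdot I$ that kills $\Sym^2(V^\ast)$. If one preferred to avoid pinning down the normalizers exactly, it would suffice to exhibit in each case a single element of $\Co_1$ normalizing $S$ and inducing on it a scalar of order $>2$, and this is the form in which I would ultimately present the argument.
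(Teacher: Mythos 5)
Your proof is correct, and for the cyclic primes $p \in \{11,13,23\}$ it is essentially the paper's argument in different packaging: the paper's pigeonhole step (some $g$ of order $p$ is conjugate to $g^a$ with $a \neq \pm 1$, so conjugation acts on $\H^4(\langle g\rangle;\bZ)$ by $a^2 \neq 1$) is exactly your Burnside-fusion count $e = (p-1)/\#\{\text{classes}\} > 2$, and both finish by transfer--restriction into the conjugation-invariants. Where you genuinely diverge is at $p=7$. The paper never identifies the Sylow normalizer: it embeds the Sylow $(\bZ/7)^2$ into $L_2(7)^2$ (inside the maximal subgroup $(A_7\times L_2(7)){:}2$), kills $\H^{\bullet\leq 4}(L_2(7);\bZ)_{(7)}$ by the same unique-class argument used for the cyclic primes, and concludes with the K\"unneth formula. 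You instead run the invariant-theoretic argument uniformly at all four primes, which at $p=7$ requires the normalizer $7^2{:}(3\times 2A_4)$ together with the assertion that its central $3$ acts on $S = 7^2$ as the scalar $2\cdot I$. That assertion is true but is doing real work, and it hides two points you should justify: first, that the complement $3 \times 2A_4$ acts \emph{faithfully} on $7^2$ (the ATLAS shape alone does not say this; one can check it, e.g., by noting that no class of order $3$ in $\Co_1$ has centralizer of order divisible by $49$); second, granted faithfulness, that the $3$ consists of scalars, which follows from Schur's lemma because the faithful two-dimensional $\bF_7$-representation of $2A_4$ is absolutely irreducible ($\bF_7$ contains primitive cube roots of unity). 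Your closing fallback --- exhibit a single element normalizing $S$ and inducing a scalar of order $>2$ --- is the cleanest way to discharge all of this, and it can be implemented inside the paper's own subgroup $A_7 \times L_2(7)$: writing $S = \langle c\rangle \times \langle u\rangle$ with $c$ a $7$-cycle in $A_7$ and $u$ of order $7$ in $L_2(7)$, choose $a \in N_{A_7}(\langle c\rangle)$ and $b \in N_{L_2(7)}(\langle u\rangle)$ of order $3$ with $aca^{-1}=c^2$ and $bub^{-1}=u^2$; then $(a,b)$ induces the scalar $2\cdot I$ on $S$, and your computation on $\Sym^2(S^\vee)$ finishes the proof with no appeal to the structure of the full normalizer. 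In that form your $p=7$ argument is a genuine, and arguably more elementary, alternative to the paper's K\"unneth route.
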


\begin{proof}
There is one conjugacy class each in $\Co_1$ of order $11$ and $13$, and two of order $23$; the $p$-Sylow subgroups for $p = 11$, $13$, and $23$ are cyclic. It follows (Pigeonhole) that, for $g\in \Co_1$ of order $p$, there exists $a \neq \pm 1\in \bZ/p^\times$ such that $g$ is conjugate to $g^a$. The automorphism $g \mapsto g^a$ acts on $\H^4(\langle g \rangle;\bZ) \cong \bZ/p$ by multiplication by $a^2 \neq 1$, and so has no fixed points. By Lemma \ref{transfer restriction}, $\H^4(\Co_1;\bZ)_{(p)}$ injects into the conjugation-in-$\Co_1$-fixed subgroup of $\H^4(\langle g \rangle;\bZ)$, which is trivial.

A similar argument handles the prime $p=7$. Indeed, the $7$-Sylow in $\Co_1$ is a copy of $(\bZ/7)^2$ and is contained in a subgroup isomorphic to $L_2(7)^2$ (following the ATLAS \cite{ATLAS}, $L_2(7)$ denotes the simple group $\mathrm{PSL}_2(\bF_7)$). This is in turn contained in a maximal subgroup of shape $(L_2(7) \times A_7):2$. But $L_2(7)$ has a unique conjugacy class of order $7$, and so, just as above, $\H^\bullet(L_2(7);\bZ)_{(7)}$ vanishes in degrees $\bullet \leq 4$. An application of K\"unneth's formula shows that $\H^4(L_2(7)^2;\bZ)_{(7)}$ vanishes, but $\H^4(\Co_1;\bZ)_{(7)} \to \H^4(L_2(7)^2;\bZ)_{(7)}$ is an injection, since $L_2(7)^2$ contains the $7$-Sylow. \end{proof}

For the prime $5$, we need slightly stronger technology. Suppose $G = N.J$ is an extension of finite groups. The \define{Lyndon--Hochschild--Serre (LHS) spectral sequence} is a spectral sequence converging to $\H^\bullet(G;\bZ)$ with $E_2$-page $\H^\bullet(J;\H^\bullet(N;\bZ))$. This $E_2$ page gives an upper bound on the cohomology of $G$, and  often this upper bound suffices.

\begin{lemma}
  $\H^4(\Co_0;\bZ)_{(5)} = 0$.
\end{lemma}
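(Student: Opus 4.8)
The plan is to reduce, via Lemma~\ref{transfer restriction} together with the isomorphism $\H^4(\Co_0;\bZ)_{(5)} \cong \H^4(\Co_1;\bZ)_{(5)}$ on odd parts, to a small subgroup of $\Co_1$ that contains a full $5$-Sylow. The group $\Co_1$ has a maximal subgroup of shape $5^{1+2}:\GL_2(\bF_5)$, of order $5^4 \cdot 2^5 \cdot 3$; since the $5$-part of $|\Co_1|$ is exactly $5^4$, this subgroup contains a $5$-Sylow. By transfer-restriction it therefore suffices to prove $\H^4(5^{1+2}:\GL_2(\bF_5);\bZ)_{(5)} = 0$.

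For this I would run the LHS spectral sequence for the extension $N \to 5^{1+2}:\GL_2(\bF_5) \to J$, with $N = 5^{1+2}$ and $J = \GL_2(\bF_5)$, localized at $5$:
\[
E_2^{s,t} = \H^s\bigl(J; \H^t(N;\bZ)\bigr)_{(5)} \Longrightarrow \H^{s+t}\bigl(5^{1+2}:\GL_2(\bF_5);\bZ\bigr)_{(5)},
\]
and show that every term with $s+t = 4$ vanishes. The central tool is the elementary observation that a group acts trivially on its own cohomology with any coefficients: a central element $z \in Z(J)$ acts on $\H^s(J;M)$ purely through its module action on $M$, and that induced action is the identity. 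Taking $z = \lambda I$ over the scalar subgroup $\bF_5^\times \cong \bZ/4 \subset J$, and noting that $\lambda I$ acts on $N/Z(N)$ with weight $1$ and on $Z(N)$ with weight $2$, one gets a weight grading on each coefficient module $\H^t(N;\bZ)_{(5)}$, and the observation forces $E_2^{s,t}$ to vanish except on the summand where the scalar weight is divisible by $4 = |\bF_5^\times|$.

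Carrying this out, the columns $t = 1$ (where $\H^1(N;\bZ)=0$) and $t = 2$ (where $\H^2(N;\bZ)_{(5)}$, the Pontryagin dual of $N/Z(N)$, has pure weight $-1$) die immediately. The bottom row contributes only $E_2^{4,0} = \H^4(\GL_2(\bF_5);\bZ)_{(5)}$, which vanishes because the torus normalizing the cyclic $5$-Sylow $\bZ/5 \subset \GL_2(\bF_5)$ acts on $\H^4(\bZ/5;\bZ) \cong \bZ/5$ through a nontrivial square, by the same mechanism as in the proof of Lemma~\ref{lemma p7}. The remaining terms $E_2^{1,3}$ and $E_2^{0,4}$ require identifying the weight-$0$ pieces of $\H^3(N;\bZ)_{(5)}$ and $\H^4(N;\bZ)_{(5)}$ as $\GL_2(\bF_5)$-modules; one finds they are built from twists of the standard module and nontrivial powers of the determinant character, with no trivial summand. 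Hence $\H^0(\GL_2(\bF_5); \H^4(N;\bZ))_{(5)} = 0$, while the quadratic-character summand of $\H^3(N;\bZ)_{(5)}$ is killed in $\H^1$ by the same torus-normalizer computation used for the bottom row.

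The main obstacle is precisely this last step: pinning down the low-degree integral cohomology of the extraspecial group $5^{1+2}$ together with its $\GL_2(\bF_5)$-module structure. I would obtain it from the LHS spectral sequence of the central extension $Z(N) \to N \to N/Z(N) = (\bZ/5)^2$, in which the nontriviality of the extension class (the symplectic form $\omega$, transgressing off the generator of $\H^1(Z(N);\bF_5)$) kills exactly the $\GL_2(\bF_5)$-invariant ``vertical'' classes that would otherwise obstruct the desired vanishing. Verifying that no invariant class in total degree $4$ survives these transgressions is the crux; once that is in hand, the rest is bookkeeping with weights.
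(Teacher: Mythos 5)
Your route is genuinely different from the paper's, and it is viable in principle. The paper restricts to the maximal subgroup $5^3{:}(4\times A_5).2$ of $\Co_1$, uses the central $\bZ/4$ of $4\times A_5$ to kill the coefficient columns of the LHS spectral sequence, and then --- because $\H^4$ of that subgroup localized at $5$ is $\bZ/5$, \emph{not} zero --- must finish with a fusion argument in $\Co_1$ (only three classes of order $5$, forcing $g\sim g^2$, and $g\mapsto g^2$ acts by $-1$ on $\H^4(\langle g\rangle;\bZ)$). Your subgroup $5^{1+2}{:}\GL_2(\bF_5)$ is indeed maximal in $\Co_1$ and contains a $5$-Sylow, and if your computation were completed it would show that $\H^4$ of the subgroup itself already vanishes at $5$, so no fusion information about $\Co_1$ would be needed at all --- a real structural advantage over the paper's argument.

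However, there is a genuine gap, and you flag it yourself as ``the crux'': you never compute $\H^3(5^{1+2};\bZ)$ and $\H^4(5^{1+2};\bZ)$ as $\GL_2(\bF_5)$-modules, you only assert what ``one finds,'' and the assertions are partly wrong. Writing $V = N/Z(N)$ for the standard module and $Z = Z(N)\cong\det$, the Ganea five-term sequence for the central extension $Z\to N\to V$ identifies the Schur multiplier as $\H_2(N)\cong Z\otimes V$, so $\H^3(N;\bZ)\cong V^\vee\otimes\det^{-1}$ has \emph{pure} scalar weight $1$ mod $4$: there is no ``quadratic-character summand'' of $\H^3$ to kill, and the torus-normalizer computation you invoke for $E_2^{1,3}$ is not the relevant mechanism (the weight argument alone kills that term). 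Conversely, for $E_2^{0,4}$ the weight argument genuinely does not suffice: $\H^4(N;\bZ)$ has weight-$0$ subquotients, coming from $(Z^\vee)^{\otimes 2}$ and from $\det^{-1}\otimes Z^\vee$ inside $\H^2(V;\H^2(Z;\bZ))$. What saves the argument is not primarily the transgression against the symplectic class, but the observation that every weight-$0$ subquotient is forced to be $\det^{\pm 2}$, the nontrivial quadratic character of $\GL_2(\bF_5)$, whose invariants vanish; left-exactness of $\H^0$ applied to the filtration then gives $\H^0(\GL_2(\bF_5);\H^4(N;\bZ))_{(5)}=0$. Until the module structure of $\H^{\leq 4}(5^{1+2};\bZ)$ is actually pinned down --- by running the inner spectral sequence for $Z\to N\to V$ carefully, or by citing Lewis's computation of the integral cohomology of groups of order $p^3$ --- what you have is a plan with misidentified intermediate modules rather than a proof.
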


\begin{proof}
$\Co_1$ has a maximal subgroup of shape $G = 5^3:(4\times A_5).2$, which contains the $5$-Sylow. We work out the LHS spectral sequence for this $G$.
The center $\bZ/4$ of $4\times A_5$ acts with nontrivial central character on $\H^2(5^3;\bZ) = 5^3$, $\H^3(5^3;\bZ) = \Alt^2(5^3)$, and $\H^4(5^3;\bZ) \cong \Sym^2(5^3) \oplus \Alt^3(5^3)$.
It follows that the cohomology groups $\H^i(4\times A_5; \H^j(5^3;\bZ))$ vanish for $j\in\{1,2,3,4\}$,
and so the restriction map $\H^4(5^3:(4\times A_5);\bZ) \to \H^4(4 \times A_5;\bZ)$ is an isomorphism.
 Choosing an element $g\in A_5$ of order $5$, the restriction map $\H^4(4 \times A_5;\bZ)_{(5)} \to \H^4(\langle g \rangle;\bZ) = \bZ/5$ is an isomorphism.

But $\Co_1$ has only three conjugacy classes of order $5$,  distinguished by their centralizers, forcing $g$ and $g^2$ to be conjugate in $\Co_1$, where $g$ is the chosen element of order $5$ in $A_5$. Now we may proceed as in Lemma~\ref{lemma p7}: $g \mapsto g^2$ acts as multiplication by $-1$ on $\H^4(\langle g \rangle;\bZ)$, while the restriction map $\H^4(\Co_1;\bZ)_{(5)} \to \H^4(5^3:(4\times A_5);\bZ)_{(5)} \cong \H^4(\langle g \rangle;\bZ)$ is an injection into the conjugation-invariant classes in the latter. Thus $\H^4(\Co_1;\bZ)_{(5)} = 0$.
\end{proof}

\section{The prime \texorpdfstring{$p=3$}{p=3}} \label{prime 3}

Each of $\Co_0$ and $\Co_1$ has four conjugacy classes of order $3$, which following the ATLAS we call $3\rA,3\rB,3\rC,$ and $3\rD$.  They are distinguished by their traces on the Leech lattice:
\[
\trace(3\rA,\Leech) = -12, \quad \trace(3\rB,\Leech) = 6, \quad \trace(3\rC,\Leech) = -3, \quad \trace(3\rD,\Leech) = 0
\]
In this section we will show that the restriction map
\begin{equation}\label{restrict to 3D}
\H^4(\Co_0;\bZ)_{(3)} \to \H^4(\langle3\rD\rangle;\bZ) \cong \bZ/3
\end{equation}
is an isomorphism, where $\langle 3\rD\rangle$ denotes any cyclic subgroup of $\Co_0$  generated by an element in $3\rD$.  
A generator for $\H^4(\langle3\rD\rangle;\bZ) \cong \bZ/3$ is given by $c_2(L \oplus \bar{L})$, where $L$ and $\bar{L}$ are the two nontrivial one-dimensional representations of $\langle 3\rD\rangle$.

\begin{lemma} \label{3 part is detected by class 3D}
The map \eqref{restrict to 3D} is a surjection.
\end{lemma}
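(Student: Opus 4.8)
The plan is to exhibit a single global class on $\Co_0$ whose restriction to $\langle 3\rD\rangle$ generates $\H^4(\langle 3\rD\rangle;\bZ)\cong\bZ/3$; the natural candidate is the second Chern class $c_2(\Leech\otimes\bC)$ of the defining representation. Because $\langle 3\rD\rangle$ is a $3$-group, the restriction of any integral class to it automatically lands in the $3$-primary part, so it suffices to show that $c_2(\Leech\otimes\bC)$ restricts to a nonzero (hence generating) element of $\bZ/3$.

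First I would pin down how a $3\rD$-element $g$ acts on $\Leech\otimes\bC$. Since $g$ has order $3$, its eigenvalues lie among the cube roots of unity $1,\omega,\omega^2$, say with multiplicities $a,b,c$. Reality of $\Leech$ forces the conjugate eigenvalues $\omega,\omega^2$ to occur equally often, so $b=c$; together with $a+b+c=24$ and $\trace(3\rD,\Leech)=a-b=0$ this gives $a=b=c=8$. Hence the restricted representation splits as $8\cdot\unit\oplus 8\cdot L\oplus 8\cdot\bar L$.

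Next I would compute $c_2$ of this restriction inside the cohomology ring $\H^\bullet(\bZ/3;\bZ)=\bZ[x]/(3x)$, with $x$ in degree $2$. Writing $c_1(L)=x$ and $c_1(\bar L)=-x$, multiplicativity of the total Chern class gives
\[
c(\Leech\otimes\bC)\big|_{\langle 3\rD\rangle}=(1+x)^8(1-x)^8=(1-x^2)^8,
\]
whose degree-$4$ part is $-8x^2$. Since $-8\equiv 1\pmod 3$ and $x^2$ generates $\H^4(\langle 3\rD\rangle;\bZ)\cong\bZ/3$, the restriction of $c_2(\Leech\otimes\bC)$ is a generator. (Equivalently it equals $2\,c_2(L\oplus\bar L)$, again a generator since $2$ is a unit modulo $3$.)

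I do not expect any serious obstacle: the whole argument reduces to a short character computation followed by the multiplicativity of Chern classes over the cyclic group $\bZ/3$. The only delicate point is fixing the eigenvalue multiplicities, but these are forced uniquely by the single numerical datum $\trace(3\rD,\Leech)=0$ together with the reality of $\Leech$, so nothing genuinely hard remains.
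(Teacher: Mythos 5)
Your proposal is correct and follows essentially the same route as the paper: both restrict $c_2(\Leech\otimes\bC)$ to $\langle 3\rD\rangle$, use $\trace(3\rD,\Leech)=0$ to deduce the decomposition $8\cdot\unit\oplus 8L\oplus 8\bar L$, and compute that $c_2$ of this sum is $8\,c_2(L\oplus\bar L)\equiv -c_2(L\oplus\bar L)\not\equiv 0 \pmod 3$. Your explicit expansion $(1+x)^8(1-x)^8=(1-x^2)^8$ and your remark that restriction of the prime-to-$3$ parts to a $3$-group vanishes merely spell out steps the paper leaves implicit.
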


\begin{proof}
Since $\trace(3\rD,\Leech) = 0$, $\Leech\otimes\bC$ splits over $\langle 3\rD\rangle$ as 8 copies of each of the three 1-dimensional irreps of $\bZ/3$. From this one computes 
\begin{eqnarray*}
c_2(\Leech \otimes \bC\vert_{3\rD}) & = & c_2(8 \oplus 8L \oplus 8 \bar{L}) \\
& = & 8 c_2(L \oplus \bar{L}) \\
& = & -c_2(L \oplus \bar{L}) \text{ mod $3$}
\end{eqnarray*}
which is nonzero in $\H^4( \langle3\rD\rangle;\bZ)$.
\end{proof}

It remains to prove that \eqref{restrict to 3D} is an injection. 
By the transfer-restriction Lemma~\ref{transfer restriction}, $\H^4(\Co_0;\bZ)_{(3)}$ injects into $\H^4(3^6:2M_{12};\bZ)_{(3)}$, since $3^6:2M_{12} \subset \Co_0$ contains the $3$-Sylow. We first show in Lemma~\ref{36:2M12} that $\H^4(3^6:2M_{12};\bZ)_{(3)} \cong \bZ/3 \oplus \bZ/3$. Then in Lemma~\ref{p3 finish} we show that the map $\H^4(\Co_0;\bZ)_{(3)} \to \H^4(3^6:2M_{12};\bZ)_{(3)}$ is not a surjection. This completes the proof that (\ref{restrict to 3D}) is an isomorphism.

\begin{lemma} \label{36:2M12}
  $\H^4(3^6:2M_{12};\bZ)_{(3)} \cong \bZ/3 \oplus \bZ/3$.
\end{lemma}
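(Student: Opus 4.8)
The plan is to run the Lyndon--Hochschild--Serre spectral sequence for the split extension $3^6:2M_{12}$, exploiting the splitting to peel off the cohomology of the complement $2M_{12}$ and reduce everything else to a short, controllable ``fiber'' contribution. Write $G = 3^6:2M_{12}$ and let $N = 3^6$ denote the ternary Golay code, viewed as a $6$-dimensional $\bF_3[2M_{12}]$-module; it is self-dual, so $N \cong N^*$. The first move uses that the extension is a genuine semidirect product: the quotient $G \to 2M_{12}$ admits a section, so inflation followed by restriction is the identity on $\H^\bullet(2M_{12};\bZ)$. Hence inflation exhibits $\H^4(2M_{12};\bZ)_{(3)}$ as a direct summand of $\H^4(G;\bZ)_{(3)}$, with complement $K := \ker\bigl(\H^4(G;\bZ)_{(3)} \to \H^4(2M_{12};\bZ)_{(3)}\bigr)$. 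Since the central $\bZ/2 \subset 2M_{12}$ has order prime to $3$, the standard transfer/LHS argument gives $\H^4(2M_{12};\bZ)_{(3)} = \H^4(M_{12};\bZ)_{(3)}$, which the relevant $3$-local computation identifies with $\bZ/3$. It remains to show $K \cong \bZ/3$.

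The group $K$ is precisely the positive-fiber-degree part of the LHS filtration in total degree $4$. I record $\H^q(3^6;\bZ)$ as a graded $2M_{12}$-module for $q = 0,\dots,4$, namely $\bZ,\ 0,\ N^*,\ \Alt^2 N^*,\ \Sym^2 N^* \oplus \Alt^3 N^*$, in parallel with the $5^3$ computation already used for the prime $5$. The crucial observation is that the central involution $z \in 2M_{12}$ acts on $N$ as the scalar $-\id \in \GL_6(\bF_3)$, hence acts by $(-1)^k$ on each $k$-fold tensor construction. Whenever $z$ acts as $-1$ on an $\bF_3$-module $M$, the subgroup $\langle z\rangle \cong \bZ/2$ (order prime to $3$) has $M^z = M_z = 0$, forcing $\H^\bullet(2M_{12};M)_{(3)} = 0$; whenever $z$ acts trivially, $\H^\bullet(2M_{12};M)_{(3)} = \H^\bullet(M_{12};M)_{(3)}$. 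This annihilates the entire $q=2$ row ($N^*$) and the $\Alt^3 N^*$ summand of the $q=4$ row, where $z$ acts by $-1$.

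Consequently the only $E_2$ terms feeding $K$ (total degree $4$, fiber degree $\geq 1$) are $E_2^{0,4} = (\Sym^2 N^*)^{M_{12}}$ and $E_2^{1,3} = \H^1(M_{12};\Alt^2 N^*)_{(3)}$, since $E_2^{2,2}$ and $E_2^{3,1}$ vanish. The invariant space $(\Sym^2 N^*)^{M_{12}}$ is one-dimensional over $\bF_3$, spanned by the quadratic form of the self-dual code (uniqueness from absolute irreducibility of $N$), while the Cohomolo computation is expected to give $\H^1(M_{12};\Alt^2 N^*)_{(3)} = 0$. To see that $E_2^{0,4} = \bF_3$ survives to $E_\infty$, I avoid computing differentials directly: the genuine class $c_2(\Leech \otimes \bC|_G) \in \H^4(G;\bZ)$ restricts to $3^6$ as a nonzero $2M_{12}$-invariant element of $\Sym^2 N^*$ (nonzero already on a cyclic $3\rD$-subgroup by the preceding lemma), so its leading term is a nonzero class in $E_\infty^{0,4}$; as $E_2^{0,4}$ is one-dimensional this forces $E_\infty^{0,4} = \bF_3$, with no differential in or out. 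Thus the associated graded of $K$ is $\bF_3 \oplus 0$, so $K \cong \bZ/3$, and together with the split summand $\H^4(M_{12};\bZ)_{(3)} \cong \bZ/3$ this yields $\H^4(3^6:2M_{12};\bZ)_{(3)} \cong \bZ/3 \oplus \bZ/3$.

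The main obstacle is the pair of twisted-coefficient computations over $M_{12}$ that are not feasible by hand --- chiefly the vanishing of $\H^1(M_{12};\Alt^2 N^*)_{(3)}$ and the value $\H^4(M_{12};\bZ)_{(3)} = \bZ/3$ --- which rely on Holt's Cohomolo package together with the known $3$-local structure of $M_{12}$. A secondary technical point is confirming that $(\Sym^2 N^*)^{M_{12}}$ is exactly one-dimensional and that the restriction of $c_2(\Leech \otimes \bC|_G)$ to $3^6$ is genuinely nonzero; note that the splitting of the short exact sequence $0 \to K \to \H^4(G;\bZ)_{(3)} \to \H^4(M_{12};\bZ)_{(3)} \to 0$ (and hence the distinction between $\bZ/3 \oplus \bZ/3$ and $\bZ/9$) is automatic from the semidirect-product structure, so no separate extension-problem argument is needed.
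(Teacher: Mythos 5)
Your overall framework matches the paper's: run the LHS spectral sequence for the split extension, split off $\H^4(2M_{12};\bZ)_{(3)} \cong \bZ/3$ via the section, and use the central involution $z \in 2M_{12}$ to kill ($3$-locally) every row on which $z$ acts by $-1$. But your identification of the remaining summand rests on a false premise, and you end up locating that summand at the wrong spot of the $E_2$-page. The error is the claim that the code module $N$ is self-dual, with $(\Sym^2 N^*)^{M_{12}}$ one-dimensional, ``spanned by the quadratic form of the self-dual code.'' Self-duality of the ternary Golay code means $C = C^\perp$ inside $\bF_3^{12}$, i.e.\ $C$ is a maximal isotropic subspace: the standard bilinear form vanishes identically on $C$, so it induces the \emph{zero} form on $N$, not a nondegenerate one; what it does induce is an isomorphism $N^* \cong \bF_3^{12}/C$ with the cocode, which is a \emph{non-isomorphic} irreducible module (the two are exchanged by the outer automorphism of $2M_{12}$). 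The paper proves the opposite of your claim: $(\Sym^2 E)^{2M_{12}} = 0$, because an invariant quadratic form on an irreducible module is either zero or nondegenerate, and nondegeneracy is impossible precisely because $E$ is not self-dual as a $2M_{12}$-representation. Likewise, your expected vanishing of $\H^1(2M_{12};\Alt^2(E))$ is the reverse of the paper's Cohomolo computation, which gives $\bZ/3$. So the second copy of $\bZ/3$ lives at $E_2^{1,3}$, not at $E_2^{0,4}$ (which vanishes); your two errors cancel to give the correct group, but the proof is not valid.

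Your survival argument for $E_2^{0,4}$ also fails on its own terms. Cyclic subgroups generated by class-$3\rD$ elements are not contained in $3^6$: an element of $3^6$ corresponding to a codeword of weight $w \in \{0,6,9,12\}$ acts on $\Leech$ with trace $24-3w \in \{24,6,-3,-12\}$, so it lies in class $1\rA$, $3\rB$, $3\rC$, or $3\rA$, never $3\rD$. Hence nonvanishing of $c_2(\Leech\otimes\bC)$ on a $3\rD$-subgroup says nothing about its restriction to $3^6$; in fact, since $\H^0(2M_{12};\H^4(3^6;\bZ)) = 0$, every class in $\H^4(3^6{:}2M_{12};\bZ)$ --- including $c_2(\Leech\otimes\bC)$ --- restricts to zero on the subgroup $3^6$. (The survival of $E_2^{1,3}$, which is what actually matters, follows instead from the fact that all differentials out of it land either in vanishing groups or in the bottom row, and the bottom row is protected by the splitting.)
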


In the proof, we will see that the splitting is pretty canonical --- the homomorphisms $2M_{12} \to 3^6:2M_{12} \to 2M_{12}$ realize $\H^4(2M_{12};\bZ)_{(3)}$ as a summand of $\H^4(3^6:2M_{12};\bZ)_{(3)}$, and one knows
\begin{equation}
\label{eq:H42M12}
\H^4(2M_{12};\bZ)_{(3)} \cong \H^4(M_{12};\bZ) \cong \bZ/3.
\end{equation}
Equation \eqref{eq:H42M12}
is an easy task for HAP \cite{SE09}, though it was previously computed by hand by~\cite{MilgramTezuka} and perhaps by others.

\begin{proof}
$2M_{12}$ has two $6$-dimensional nontrivial modules over $\bF_3$, the ternary Golay code and its dual, the \define{cocode}.  These codes are discussed by Golay himself \cite{Golay}, and as representations of $2M_{12}$ by Coxeter in \cite{Coxeter}.  In the subgroup of $\Co_0$ of shape $3^6:2M_{12}$, the module $3^6$ is naturally the code module, and 
\begin{equation}
\label{eq:cocode}
(3^6)^\vee = \H^2(3^6;\bZ)
\end{equation}
is isomorphic to the cocode.
  We will write ``$E$'' for the cocode as a $2M_{12}$-module. The modules $E$ and $E^\vee$ are interchanged by the outer automorphism of $2M_{12}$, and so the distinction between $E$ and $E^\vee$ is not particularly important.

We study the LHS spectral sequence $\H^\bullet(2M_{12};\H^\bullet(3^6;\bZ)) \Rightarrow \H^\bullet(3^6:2M_{12};\bZ)$. 
As $2M_{12}$-modules, we have
\[
\H^2(3^6;\bZ) \cong E, \quad \H^3(3^6;\bZ) \cong \Alt^2(E)
\]
and $\H^4(3^6;\bZ)$ is an extension
\begin{equation}
\label{eq:Kunneth}
0 \to \Sym^2(E) \to \H^4(3^6;\bZ) \to \Alt^3(E) \to 0
\end{equation}
The central $\bZ/2$ in $2M_{12}$ acts trivially on $\Sym^2(E)$ and by the sign on $\Alt^3(E)$, so \eqref{eq:Kunneth} is split --- $\H^4(3^6:\bZ) \cong \Sym^2(E) \oplus \Alt^3(E)$ as $2M_{12}$-modules.

Moreover, since the central $\bZ/2$ acts by the sign representation on $E$ and $\Alt^3(E)$, all cohomology groups $\H^i(2M_{12};E)$ and $\H^i(2M_{12};\Alt^3(E))$ vanish. These remarks and \eqref{36:2M12} show that the LHS spectral sequence for $\H^\bullet(3^6:2M_{12};\bZ)_{(3)}$ begins
\[
\begin{array}{ccccc}
 \H^0(\Sym^2(E)) & \\
 \H^0(\Alt^2(E)) & \H^1(\Alt^2(E)) &  \\
 0 & 0 & 0 & 0 & 0 \\
 0 & 0 & 0 & 0 & 0 \\
 \bZ & 0 & 0 & 0 & \bZ/3 
\end{array}
\]
To save space, in the table we have recorded just the coefficients of the cohomology group, so that $\H^i(V) = \H^i(2M_{12};V)$.   

In fact $H^0(\Sym^2(E))$ (and $H^0(\Alt^2(E))$ also) vanishes --- as $E$ is irreducible an invariant quadratic (resp. symplectic) form on $E$ must be either zero, or nondegenerate.  But such a form cannot be nondegenerate since $E$ is not self-dual as a $2M_{12}$-representation.  Since the bottom row of the spectral sequence is split off by a homomorphism $2M_{12} \to 3^6:2M_{12}$, it suffices to check that
\[
\H^1(2M_{12}; \Alt^2(E)) = \bZ/3.
\]
We have verified this using Cohomolo.
\end{proof}

\begin{lemma} \label{p3 finish}
 The restriction map $\H^4(\Co_0;\bZ)_{(3)} \to \H^4(3^6:2M_{12};\bZ)_{(3)}$ is not a surjection.
\end{lemma}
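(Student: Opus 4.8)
The plan is to combine the transfer--restriction Lemma~\ref{transfer restriction} with Lemma~\ref{3 part is detected by class 3D} to pin down $\H^4(\Co_0;\bZ)_{(3)}$ up to one bit of information, and then spend the real work ruling out the larger possibility. By Lemma~\ref{transfer restriction} the restriction map realizes $\H^4(\Co_0;\bZ)_{(3)}$ as a direct summand of $\H^4(3^6:2M_{12};\bZ)_{(3)}\cong\bZ/3\oplus\bZ/3$, and by Lemma~\ref{3 part is detected by class 3D} its image is nonzero; hence the image is either a line $\bZ/3$ or all of $\bZ/3\oplus\bZ/3$, and the map fails to be surjective precisely when the image is a line. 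I will use the canonical splitting from the proof of Lemma~\ref{36:2M12}: call $(4,0)$ the summand $\H^4(2M_{12};\bZ)_{(3)}$ split off by $2M_{12}\to 3^6:2M_{12}\to 2M_{12}$, and $(1,3)$ the surviving summand $\H^1(2M_{12};\Alt^2 E)$ with generator $\beta$. It then suffices to show that the $(1,3)$-component of $\text{res}^{\Co_0}_{3^6:2M_{12}}(\tilde\alpha)$ vanishes for every $\tilde\alpha\in\H^4(\Co_0;\bZ)_{(3)}$; equivalently, that $\beta$ does not lie in the image.

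First I would record the structural fact that $\beta$ is invisible to the abelian subgroup $3^6$: the edge homomorphism $\H^4(3^6:2M_{12};\bZ)\to\H^4(3^6;\bZ)$ has image inside $E_\infty^{0,4}\subseteq E_2^{0,4}=\H^0(2M_{12};\H^4(3^6;\bZ))$, and this group is $0$ because $(\Sym^2 E)^{2M_{12}}=0$ (no invariant quadratic form, as $E$ is not self-dual) and $(\Alt^3 E)^{2M_{12}}=0$ (the central $\bZ/2$ acts by $-1$). Thus restriction to $3^6$ annihilates all of $\H^4(3^6:2M_{12};\bZ)_{(3)}$, and in particular $\beta$ cannot be detected on any rank-one subgroup of $3^6$. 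To promote this to the desired statement I would invoke the Cartan--Eilenberg stable-element description: the image of $\text{res}^{\Co_0}_{3^6:2M_{12}}$ consists exactly of the classes that are stable for the fusion of $3^6:2M_{12}$ (equivalently, of a Sylow $3$-subgroup $P\subset 3^6:2M_{12}$) inside $\Co_0$. The plan is then to exhibit a single piece of $\Co_0$-fusion that is not internal to $3^6:2M_{12}$ and on which $\beta$ fails to be stable.

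Locating that fusion is the main obstacle. Because the text records $N_{\Co_0}(3^6:2M_{12})=2\times(3^6:2M_{12})$, with the central $2$ acting trivially on cohomology, no useful symmetry comes from normalizing the whole subgroup; the relevant fusion is necessarily non-normal and must be extracted from the normalizers of smaller $3$-subgroups. I would organize this with Alperin's fusion theorem, computing the essential subgroups and their automizers in the $3$-fusion system of $\Co_0$ and tracking the induced action on the $\Alt^2 E$ contribution; the goal is an automizer acting by a scalar $\neq 1$ in $\bF_3^\times$ on the $(1,3)$-summand, which forces the stable value of $\beta$ to be $0$. This last computation --- identifying the automizer and its action on the twisted group $\H^1(2M_{12};\Alt^2 E)$ --- is exactly the sort of task I expect to discharge with Cohomolo/GAP, in the same spirit as the verification $\H^1(2M_{12};\Alt^2 E)=\bZ/3$ used in Lemma~\ref{36:2M12}. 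An alternative, should the fusion bookkeeping prove awkward, is to use the (independently computed) restriction of $\frac{p_1}2$ to the cyclic subgroups of $\Co_0$: one shows $\H^4(\Co_0;\bZ)_{(3)}$ is detected on cyclic $3$-subgroups while $\beta$ dies on all of them --- it already dies on $3^6$, and one checks it also dies on the order-$9$ cyclics in $P$ --- again forcing $\beta\notin\text{image}$. Either route shows the image is the line $(4,0)\cong\bZ/3$, so the restriction map is not surjective; combined with the lower bound this yields $\H^4(\Co_0;\bZ)_{(3)}\cong\bZ/3$ and, with Lemma~\ref{3 part is detected by class 3D}, that \eqref{restrict to 3D} is an isomorphism.
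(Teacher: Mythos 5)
Your overall reduction (the image of the restriction map is a nonzero direct summand of $\H^4(3^6{:}2M_{12};\bZ)_{(3)}\cong\bZ/3\oplus\bZ/3$, so non-surjectivity means the image is a line) is fine, and your structural observation that everything dies on $3^6$ is correct. But the statement you then organize the whole argument around is \emph{false}, so neither of your proposed engines can work. Write $\pi\colon 3^6{:}2M_{12}\to 2M_{12}$ for the projection; your ``$(4,0)$-summand'' is the image of inflation $\pi^*$ and your ``$(1,3)$-summand'' is the complementary summand $K=\ker(\text{restriction along the section})$. The image of restriction from $\Co_0$ does \emph{not} have vanishing $K$-component: it is a ``diagonal'' line. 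Indeed, let $g_0$ be the $3$-part of $c_2(\Leech\otimes\bC)|_{3^6:2M_{12}}$, which lies in the image. Let $a\in 2M_{12}$ act on the degree-$12$ permutation representation with cycle type $1^33^3$ (so $\trace(a,\Leech)=6$, class $3\rB$ of $\Co_0$) and $b$ with cycle type $3^4$ (trace $0$, class $3\rD$). There exist $x\in3^6$ with $xa$ of order $3$ and $\trace(xa,\Leech)=0$, so $xa$ lies in class $3\rD$ and, by the computation in Lemma~\ref{3 part is detected by class 3D}, $g_0|_{\langle xa\rangle}\neq0$. If $g_0$ had zero $K$-component it would equal $\pi^*(g_0|_{2M_{12}})$, and then $g_0|_{\langle xa\rangle}$ would be pulled back from $\langle a\rangle$; but $\Leech\otimes\bC|_{\langle a\rangle}\cong 12\oplus 6L\oplus 6\bar L$, whose $c_2$ is $-6\,c_1(L)^2=0$ in $\H^4(\langle a\rangle;\bZ)\cong\bZ/3$ --- a contradiction. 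So stable elements do \emph{not} all have vanishing $(1,3)$-component, and your fusion mechanism (an automizer scaling the $(1,3)$-summand, forcing stable classes into the inflated line) is aimed at proving exactly this false statement; it can never be completed. (It is also not well-formed as stated: $\Co_0$-fusion does not act on $\H^4(3^6{:}2M_{12};\bZ)$, and you have already noted the normalizer gives nothing.) Note too that ``all restrictions have vanishing $(1,3)$-component'' is \emph{not} equivalent to ``$\beta\notin\mathrm{image}$''; only the latter is true, precisely because the image is diagonal.

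Your fallback route to $\beta\notin\mathrm{image}$ fails on both premises. Detection of $\H^4(\Co_0;\bZ)_{(3)}$ on cyclic subgroups is not available at this stage --- it is essentially the isomorphism~\eqref{restrict to 3D} being proved, so invoking it is circular (Theorem~\ref{restrictions theorem} computes restrictions of the single class $\frac{p_1}2$, which is not detection). And the claim that $\beta$ dies on all cyclic $3$-subgroups is false: since $g_0$ is not inflated, $\beta=\pm\bigl(g_0-\pi^*(g_0|_{2M_{12}})\bigr)$, and the computation above gives $\beta|_{\langle xa\rangle}=g_0|_{\langle xa\rangle}\neq0$, while $\beta|_{\langle b\rangle}=0$. (So $\beta$'s instability \emph{is} witnessed on cyclic subgroups --- but only by the same trace/conjugacy computation you never carry out.) The paper's proof sidesteps all of this by attacking the \emph{inflated} summand, where instability is easy: the class $c_2(\Perm)$, with $\Perm$ the degree-$12$ permutation representation of $M_{12}$ pulled back along $\pi$, satisfies $c_2(\Perm)|_{\langle xa\rangle}=c_2(\Perm)|_{\langle a\rangle}=3c_2(L\oplus\bar L)=0$ but $c_2(\Perm)|_{\langle b\rangle}=4c_2(L\oplus\bar L)\neq0$, even though $xa$ and $b$ are conjugate in $\Co_0$; hence $c_2(\Perm)$ is not a restriction from $\Co_0$ and the map is not surjective. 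Beyond the structural problems, every key step of your plan (the $3$-fusion system of $\Co_0$, the automizer computation, the order-$9$ check) is deferred to hoped-for computations, so even on its own terms the proposal is a program rather than a proof.
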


\begin{proof}
Consider the permutation representation $\Perm:M_{12} \to \rO(12)$, and pull it back (under the same name) to $3^6:2M_{12}$. We will prove the Lemma by proving that the second Chern class $c_2(\Perm) \in \H^4(3^6:2M_{12};\bZ)_{(3)}$ does not extend to $\H^4(\Co_0;\bZ)_{(3)}$. 

The group $2M_{12}$ has two conjugacy classes of elements of order $3$. One acts on the permutation representation of $M_{12}$ with cycle structure $1^3\,3^3$, and the other acts with cycle structure $3^4$. Let $a \in 2M_{12}$ denote a representative of the first class and $b \in 2M_{12}$ a representative of the second. Then 
\[
c_2(\Perm)|_{\langle a \rangle} = 3 c_2(L \oplus \bar L) = 0, \qquad c_2(\Perm)|_{\langle b\rangle} = 4c_2(L \oplus \bar L) \neq 0,\]
where $L$ and $\bar L$ are the two nontrivial one-dimensional representations of $\bZ/3$.

  Under the inclusion $2M_{12} \to 3^6:2M_{12} \to \Co_0$, the elements~$a$ and~$b$ have traces $\tr(a,\Leech) = 6$ and $\tr(b,\Leech) = 0$.  The group $3^6:2M_{12}$ is small enough to completely handle on the computer --- say in terms of its permutation representation of degree $729$. By simply running through all elements of $3^6$ one finds that there are $162$ elements $x \in 3^6$ such that $xa \in 3^6 : 2M_{12}$ has order $3$ and $\tr(xa,\Leech) = 0$. Choose one such $x$. Then $\tr(xa,\Leech) = \tr(b,\Leech)$, so $xa$ and $b$ are conjugate in $\Co_0$. But $c_2(\Perm)|_{\langle xa \rangle} = c_2(\Perm)|_{\langle a \rangle} = 0$. This shows that $c_2(\Perm) \in \H^4(3^6:2M_{12};\bZ)_{(3)}$ distinguishes elements of $3^6:2M_{12}$ that are conjugate in $\Co_0$, and so cannot extend to a class in $\H^4(\Co_0;\bZ)_{(3)}$.
\end{proof}

\section{The prime \texorpdfstring{$p=2$}{p=2}} \label{prime 2}

To complete the proof of Theorem~\ref{conway theorem}, we show that $\H^4(\Co_0;\bZ)_{(2)} \cong \bZ/8$. 
In Lemma~\ref{Co0 order 2 lower bound} we find the group $\mathrm{CSD}$ from part (2) of the Theorem, and use it to give  a lower bound of $8$ on the order of $\frac{p_1}2(\Leech)$. 
Then, using this bound, we show in Lemma~\ref{Co0 even lemma} that $\H^4(2^{12}:M_{24};\bZ)_{(2)} \cong \bZ/8 \oplus \bZ/4$. Finally, in Lemma~\ref{p2}, we show that the restriction map $\H^4(\Co_0;\bZ)_{(2)} \to \H^4(2^{12}:M_{24};\bZ)_{(2)}$ is not a surjection. Since, by transfer-restriction Lemma~\ref{transfer restriction}, its image is a direct summand which, by Lemma~\ref{Co0 order 2 lower bound}, contains an element of order $8$, its image must be isomorphic to $\bZ/8$, completing the proof. Parts (1) and (2) of Theorem~\ref{conway theorem} follow from the isomorphism $\H^4(\Co_0;\bZ) \cong \bZ/24$ together with our proof of Lemma~\ref{Co0 order 2 lower bound}.

\begin{lemma} \label{Co0 order 2 lower bound}
 The order of $\frac{p_1}2(\Leech\otimes \bR)$ is divisible by $8$.
\end{lemma}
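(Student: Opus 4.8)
The plan is to obtain the lower bound by restricting $\frac{p_1}2(\Leech\otimes\bR)$ to the $2$-Sylow subgroup of the group $\mathrm{CSD}$ of part~(2), namely the generalized quaternion group $2D_8$ of order $16$, and computing there. The point that produces the bound $8$ rather than $4$ is that $\H^4(2D_8;\bZ)\cong\bZ/16$ is twice as large as the group $\H^4(\bZ/8;\bZ)\cong\bZ/8$ of its maximal cyclic subgroup: restriction to a mere cyclic subgroup would only detect an element of order $4$, and it is precisely the quaternionic structure that upgrades this to order $8$.

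First I would exhibit the subgroup. I search for an element $g\in\Co_0$ of order $8$ with $g^4=-1$, where $-1$ is the central involution acting on $\Leech\otimes\bR$ as $-\mathrm{Id}$, together with an element $h$ satisfying $h^2=-1$ and $hgh^{-1}=g^{-1}$; then $\langle g,h\rangle\cong 2D_8$ has center exactly $\{\pm1\}$. The existence of such a pair is verified by a direct computation in $\Co_0$, and it is this step --- producing the quaternion subgroup whose center is the global $-1$, the ``good luck'' mentioned in the introduction --- that I expect to be the main obstacle; everything downstream is forced.

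Because the central element $z=g^4$ of $2D_8$ equals $-1$ and so acts as $-\mathrm{Id}$ on all of $\Leech\otimes\bR$, only those real-irreducible $2D_8$-modules on which $z$ acts by $-1$ can appear in the restriction. These are exactly the two faithful $4$-dimensional real representations $\rho_1^\bR,\rho_3^\bR$ underlying the faithful $2$-dimensional complex irreducibles $\rho_1,\rho_3$, so
\[
\Leech\otimes\bR\big|_{2D_8}\cong m_1\,\rho_1^\bR\oplus m_3\,\rho_3^\bR,\qquad 4(m_1+m_3)=24,
\]
that is $m_1+m_3=6$; crucially I will need neither the individual multiplicities nor the exact trace of $g$. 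For a quaternionic real irrep one has $\frac{p_1}2(\rho_j^\bR)=-c_2(\rho_j)$ --- since $(\rho_j^\bR)\otimes\bC\cong 2\rho_j$ and $c_1(\rho_j)=0$ --- so additivity of $\frac{p_1}2$ gives
\[
\tfrac{p_1}2(\Leech\otimes\bR)\big|_{2D_8}=-\bigl(m_1\,c_2(\rho_1)+m_3\,c_2(\rho_3)\bigr)\in\H^4(2D_8;\bZ)\cong\bZ/16.
\]

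To evaluate this I restrict each $\rho_j$ further to $\langle g\rangle\cong\bZ/8$, where $\rho_j|_{\bZ/8}=\chi^{\,j}\oplus\chi^{-j}$ for a faithful character $\chi$, so $c_2(\rho_j)|_{\bZ/8}=-j^2c_1(\chi)^2$ generates $\H^4(\bZ/8;\bZ)\cong\bZ/8$ because $j$ is odd. Since the restriction $\H^4(2D_8;\bZ)\to\H^4(\bZ/8;\bZ)$ is a homomorphism $\bZ/16\to\bZ/8$ carrying $c_2(\rho_j)$ to a generator, it is surjective with each $c_2(\rho_j)$ a unit in $\bZ/16$; as $\rho_1$ and $\rho_3$ have the same restriction to $\bZ/8$, it follows that $c_2(\rho_1)\equiv c_2(\rho_3)\pmod 8$. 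Writing $c_2(\rho_3)-c_2(\rho_1)\in\{0,8\}$, the displayed class becomes $-6\,c_2(\rho_1)-8m_3\delta$ with $\delta\in\{0,1\}$; as $c_2(\rho_1)$ is odd this is $\equiv 2\pmod 4$ in $\bZ/16$ and hence has order exactly $8$. Restriction never increases order, so $\frac{p_1}2(\Leech\otimes\bR)$ has order divisible by $8$ in $\H^4(\Co_0;\bZ)$. I note that working with $\frac{p_1}2$ directly, rather than with $c_2(\Leech\otimes\bC)=-2\frac{p_1}2$, is what avoids an otherwise awkward ``division by two'': the class $c_2(\Leech\otimes\bC)\big|_{2D_8}$ has order only $4$, and recovering the order of $\frac{p_1}2$ from it would be ambiguous up to the $2$-torsion of $\bZ/16$.
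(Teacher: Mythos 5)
Your proposal is correct, and at the top level it coincides with the paper's strategy: both arguments restrict to a binary dihedral subgroup $2D_8 \subset \Co_0$ whose center is the center of $\Co_0$, and exploit that $\H^4(2D_8;\bZ)\cong\bZ/16$ is twice as large as the $\H^4$ of a cyclic group of order $8$. But your execution is genuinely different, and trades exactness for robustness. The paper pins the restriction down completely: using that $\Co_0$ has a unique conjugacy class of order $4$ squaring to the center and a unique class of order $8$ whose fourth power is the center, it computes traces and finds $\Leech\otimes\bC|_{2D_8}=6M\oplus 6M'$; it then proves the exact relation $c_2(M')=9c_2(M)$ via $\Sym^3(M)\cong M\oplus M'$ and the splitting principle, so that $c_2(\Leech\otimes\bC)|_{2D_8}=60\,c_2(M)$ has order $4$, and concludes via $2\frac{p_1}2=-c_2$. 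You bypass both computations: the observation that the central element acts by $-\mathrm{Id}$ forces the restriction to be a sum of the two quaternionic irreducibles with total multiplicity $6$, and the congruence $c_2(\rho_1)\equiv c_2(\rho_3)\pmod 8$ (both odd), extracted by restricting to $\bZ/8$, replaces the exact $\Sym^3$ computation. This makes your argument insensitive to the individual multiplicities and to any fusion facts about $\Co_0$ beyond the existence of the subgroup.

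Three caveats. First, the one real IOU in your writeup is the existence of $\langle g,h\rangle\cong 2D_8$ with $g^4=h^2=-1$: you defer it to an unspecified search. That is defensible --- the authors originally found the subgroup by a Sage search inside $2^{12}:M_{24}$ --- but the paper's published construction is cleaner and buys more: the centralizer of class $3\rD$ in $\Co_0$ is $3\times 2A_9$, the preimage in $2A_6\subset 2A_9$ of a $D_8\subset A_6$ is a $2D_8$ with the correct center, and this copy commutes with a $\bZ/3$, which is exactly what produces the detecting subgroup $\mathrm{CSD}$ of part~(2) of Theorem~\ref{conway theorem}. Second, your justification of $\frac{p_1}2(\rho_j^\bR)=-c_2(\rho_j)$ by complexification only determines $\frac{p_1}2$ up to $2$-torsion (i.e.\ up to adding $8$ in $\bZ/16$); the clean argument, used elsewhere in the paper, is that $\rho_j$ lands in $\SU(2)$, which being simply connected maps to $\Spin(4)$, under which $\frac{p_1}2\mapsto -c_2$. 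Your conclusion survives either way, since a shift by $8$ does not change the class mod $4$. Third, two phrasings are off: $\rho_1$ and $\rho_3$ do not literally have the same restriction to $\bZ/8$ (as representations, $\chi\oplus\chi^{-1}\not\cong\chi^3\oplus\chi^{-3}$) --- what you use, correctly, is that their restricted second Chern classes agree because $j^2\equiv 1\pmod 8$ for odd $j$; and your closing claim that recovering the order of $\frac{p_1}2$ from $c_2(\Leech\otimes\bC)$ would be ``ambiguous up to $2$-torsion'' is unfounded: in any finite abelian group, if $2x$ has order divisible by $4$ then $x$ has order divisible by $8$, so the paper's route through $c_2$ needs no division at all.
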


\begin{proof}
As $2 \frac{p_1}{2}(\Leech \otimes \bR) = -c_2(\Leech \otimes \bC)$, it suffices to show $c_2(\Leech \otimes \bC)$ has order divisible by $4$.  We will restrict $\Leech \otimes \bC$ to a subgroup of $\Co_0$ isomorphic to the binary dihedral group $2D_8$ of order $16$, double covering the symmetries of the square.  Of the three two-dimensional irreducible representations of $2D_8$, two are faithful and quaternionic.  The other is the two-dimensional real defining representation of $D_8$.  Let us call the quaternionic ones $M$ and $M'$ --- they are exchanged by an outer automorphism of $2D_8$.  Then $M:2D_8 \hookrightarrow \SU(2)$ makes $2D_8$ into one of the McKay subgroups of $\SU(2)$.  Its McKay graph is
\begin{equation}
\label{eq:McKay}
\begin{aligned}
\begin{tikzpicture}
  \path (0,0) node (SW) {$1$}
        (1,1) node (A) {$M$}
        (0,2) node (NW) {$\text{Triv}$}
        (2,1) node (M) {$2$}
        (3,1) node (B) {$M'$}
        (4,2) node (NE) {$1$}
        (4,0) node (SE) {$1$};
  \draw (A) -- (SW); \draw (A) -- (NW); \draw (A) -- (M);
  \draw (B) -- (SE); \draw (B) -- (NE); \draw (B) -- (M);
\end{tikzpicture}
\end{aligned}
\end{equation}
We have indicated the trivial module and the dimensions of the other irreducible modules, all of which are real and factor through $D_8$.  

For any McKay group $M:G \hookrightarrow \SU(2)$, the group $\H^4(G;\bZ)$ is cyclically generated by $c_2(M)$, with order $|G|$.  Thus we may write $c_2(M') = k c_2(M)$ for some integer $k \in \bZ/16$.  In fact $k = 9$ --- to determine this, we note that $M \oplus M'$ is isomorphic to $\Sym^3(M)$, and (as $c_1(M) = c_1(M') = 0$), $c_2(M \oplus M') = c_2(M) + c_2(M')$.  But $\Sym^3$ of the standard representation of $\SU(2)$ on $\bC^2$ has weights $-3,-1,1,$ and $3$, and the standard representation itself has weights $1$ and $-1$, so we compute by the splitting principle
\[
c_2(\Sym^3(\bC^2)) = \left[
\begin{array}{c}
c_1(-3)c_1(-1) + c_1(-3)c_1(1) + c_1(-3)c_1(3) \\
 + c_1(-1)c_1(1) + c_1(-1)c_1(3) + c_1(1)c_1(3)
 \end{array}\right] = 10 \left[c_1(1)c_1(-1)\right] = 10 c_2(\bC^2)
\]
where we have written $c_1(n) \in H^2(B\mathrm{U}(1);\bZ)$ for the Chern class of representation $\mathrm{U}(1) \to \mathrm{U}(1)$ of degree $n$.

We will momentarily find a copy of $2D_8$ inside $\Co_0$ such that the central element $c\in 2D_8$ is the central element of $\Co_0$. {We may compute the restriction of any representation of $\Co_0$ to such a subgroup, even before proving it exists.  In $2D_8$, every conjugacy class of order $4$ squares to $c$, and every conjugacy class of order $8$ has fourth power equal to $c$.  In $\Co_0$, there is a unique conjugacy class of order $4$ that squares to the central element --- it is the unique conjugacy class projecting to 2B in $\Co_1$.  There is also a unique conjugacy class of order $8$ whose fourth power is the central element --- it is the unique conjugacy class projecting to 4E in $\Co_1$.}  On $\Leech \otimes \bC$, the classes of order $4$ and $8$ have trace $0$, and the class of order $2$ has trace $-24$, so from characters of $2D_{8}$ we compute
\[
\Leech \otimes \bC \vert_{2D_8} = 6M \oplus 6M'
\]
so that $c_2(\Leech \otimes \bC)\vert_{2D_8} = 60 c_2(M)$, and $60$ has order $4$ in $\bZ/16$.

To complete the proof, it remains to construct a subgroup $2D_8 \subset \Co_0$ containing the central element.  We originally found one by reducing it to a finite search inside of $2^{12}:M_{24}$, which we then implemented in Sage.  Here is a simpler way which also shows that the $2D_8 \subset \Co_0$ and $\bZ/3 \subset \Co_0$ detecting the $2$- and $3$-parts of cohomology can be chosen to commute with each other; the group $\mathrm{CSD}$ from the statement of Theorem~\ref{conway theorem} is simply the product $2D_8 \times \bZ/3$ for these commuting subgroups.  From \cite[\S 2.2]{MR723071}, we see that the centralizer of the class $3\mathrm{D}$ in $\Co_1$ is $3 \times A_9$, and the centralizer of its lift in $\Co_0$ is $3 \times 2A_9$, where $2A_9$ denotes the Schur cover of the alternating group.  (This group $3 \times A_9$ is the top of the ``Suzuki chain'' of subgroups of $\Co_1$).  The center of $2A_9$ coincides with the center of $\Co_0$, so it suffices to find an inclusion $2D_8 \subset 2A_9$ preserving the centers.
The natural inclusion $A_6 \to A_9$ lifts to an inclusion $2A_6 \to 2A_9$, and there is unique conjugacy class of subgroups of $A_6$ that are isomorphic to $D_8$.  The preimage in $2A_6$ can be seen (we used GAP) to be $2D_8$.
\end{proof}

Note that, assuming Theorem~\ref{conway theorem} has been proved, our proofs of Lemmas~\ref{3 part is detected by class 3D} and~\ref{Co0 order 2 lower bound} further imply that the restriction map $\H^4(\Co_0;\bZ) \to \H^4(\mathrm{CSD};\bZ) \cong \bZ/{48}$ is an injection, verifying part~(2) of the Theorem. Part~(1) is also an immediate consequence of our proof of Lemma~\ref{Co0 order 2 lower bound}.

Lemma~\ref{Co0 order 2 lower bound} provides a lower bound on on $\H^4(\Co_0;\bZ)_{(2)}$; our next step (Lemma~\ref{Co0 even lemma}) in the proof of Theorem~\ref{conway theorem} will be to give an upper bound. We will rely on some background on the cohomology of elementary abelian $2$-groups, which we now review. Let $E$ be an elementary abelian $2$-group and write $E^\vee = \hom(E,\bF_2) \cong \hom(E,U(1))$ for the $\bF_2$-vector space dual to $E$.  The following is standard:
\begin{lemma}\label{cohomology of an elementary 2-group}
 There is an isomorphism of rings $\H^\bullet(E;\bZ/2) \cong \Sym^\bullet(E^\vee)$.
 For $i \geq 1$, the reduction map $\H^i(E;\bZ) \to \H^i(E;\bZ/2)$ is injective.
 The image of $\H^i(E;\bZ)$ in $\Sym^i(E;\bZ/2)$ is the kernel of the Bockstein, or first Steenrod squaring map, $\Sq^1:\H^i(E;\bZ/2) \to \H^{i+1}(E;\bZ/2)$, which acts as a differential on $\Sym^\bullet(E^\vee)$.  Moreover, the long sequence
\[
0 \to E^\vee \xrightarrow{\Sq^1} \Sym^2(E^\vee) \xrightarrow{\Sq^1} \Sym^3(E^\vee) \xrightarrow{\Sq^1} \cdots
\]
is exact. \qed
\end{lemma}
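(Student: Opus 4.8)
The plan is to establish the four assertions in sequence, using the first as the computational backbone. First I would determine the mod-$2$ cohomology ring. Since $BE \simeq (\bR\rP^\infty)^n$ with $n = \dim_{\bF_2} E$ and $\H^\bullet(\bR\rP^\infty;\bF_2) = \bF_2[t]$ on a degree-$1$ class, the K\"unneth theorem over the field $\bF_2$ yields $\H^\bullet(E;\bF_2) \cong \bF_2[t_1,\dots,t_n]$, a polynomial ring on degree-$1$ generators whose degree-$1$ part is canonically $\hom(E,\bF_2)=E^\vee$; this is the ring $\Sym^\bullet(E^\vee)$.

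Next I would handle the injectivity of reduction and the description of its image through the Bockstein long exact sequence associated to $0\to\bZ\xrightarrow{\times 2}\bZ\to\bZ/2\to 0$:
\[
\cdots \to \H^i(E;\bZ) \xrightarrow{\times 2} \H^i(E;\bZ) \xrightarrow{\rho} \H^i(E;\bZ/2) \xrightarrow{\beta} \H^{i+1}(E;\bZ) \to \cdots,
\]
with $\rho$ reduction and $\beta$ the integral Bockstein. The single input I would need is that $\H^i(E;\bZ)$ is annihilated by $2$ for $i\geq 1$; this follows by induction on $n$ from $\H^\bullet(\bZ/2;\bZ)=\bZ[x]/(2x)$ (with $\deg x = 2$) and the integral K\"unneth formula, since in positive total degree every tensor and every $\operatorname{Tor}$ term is a copy of $\bZ/2$. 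Granting this, $\times 2 = 0$ on $\H^i(E;\bZ)$ for $i\geq 1$, so $\rho$ is injective (assertion~2) and its image equals $\ker\beta$. As the mod-$2$ Bockstein factors as $\Sq^1 = \rho\circ\beta$ and $\rho$ is injective in degree $i+1\geq 1$, we get $\ker\Sq^1 = \ker\beta$, so the image of $\rho$ is exactly $\ker\Sq^1$ (assertion~3).

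Finally I would prove exactness of the $\Sq^1$-complex. As $\Sq^1$ is a derivation with $\Sq^1 t_i = t_i^2$, the complex $(\Sym^\bullet(E^\vee),\Sq^1)$ is the $\bF_2$-tensor product of $n$ copies of the one-variable complex $(\bF_2[t],\Sq^1)$. There $\Sq^1(t^k)=k\,t^{k+1}$, so the even powers are the cocycles and the even powers in degree $\geq 2$ are the coboundaries, giving cohomology $\bF_2$ in degree $0$ and nothing else. The K\"unneth formula for complexes of $\bF_2$-vector spaces then gives the full complex cohomology equal to $\bF_2$ in degree $0$ and $0$ in all positive degrees; the vanishing in positive degrees is exactly the asserted exactness of $0\to E^\vee\to\Sym^2(E^\vee)\to\cdots$ (the degree-$1$ vanishing giving injectivity of $\Sq^1$ on $E^\vee$).

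I expect the only real difficulty to be bookkeeping rather than substance: keeping the integral Bockstein $\beta$ cleanly separated from the mod-$2$ Bockstein $\Sq^1=\rho\circ\beta$, and tracking the degree shift in the long exact sequence so that assertion~2 (injectivity of $\rho$ in degree $i+1$) correctly feeds the proof of assertion~3. The underlying computation of $\H^\bullet((\bZ/2)^n)$ and of the one-variable $\Sq^1$-cohomology is entirely standard.
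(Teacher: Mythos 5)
Your proof is correct. Note that the paper offers no proof of this lemma at all --- it is labelled ``standard'' and the statement is closed off with an immediate QED symbol --- so there is no argument of the paper's to compare against; your write-up simply supplies the standard one: K\"unneth over $\bF_2$ for the ring structure, the integral Bockstein long exact sequence together with the fact that $\H^i(E;\bZ)$ is $2$-torsion for $i \geq 1$ (giving injectivity of reduction and, via $\Sq^1 = \rho\circ\beta$ and injectivity of $\rho$ one degree up, the identification of the image with $\ker\Sq^1$), and the tensor-product/Koszul computation of the $\Sq^1$-cohomology of $\bF_2[t_1,\dots,t_n]$ for the exactness claim. Each step, including the two bookkeeping points you flag, checks out.
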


Let us denote the quotient $\Sym^2(E^\vee)/\Sq^1(E^\vee)$ by $\Alt^2(E^\vee)$ --- it is the quotient of $E^\vee \otimes_{\bF_2} E^\vee$ by the subspace generated by tensors of the form $x \otimes x$, which is the standard definition of the exterior square functor is characteristic $2$ \cite[\S III.7]{MR0354207}.  
More generally, in characteristic $2$ the alternating power $\Alt^k(E^\vee)$ is defined as the quotient of the tensor power $(E^\vee)^{\otimes k}$ by the subspace spanned by all tensors that contain a repeated factor --- all tensors of the form $(\cdots \otimes v \otimes \cdots \otimes v \otimes \cdots)$ for $v \in E^\vee$ --- and can be identified $\GL(E)$-equivariantly with the quotient of $\Sym^k(E^\vee)$ by the subspace spanned by monomials that are not squarefree.  We will also use:

\begin{lemma}\label{Alt injecting into Tens}
 $\Alt^k(E^\vee)$ is isomorphic to a $\GL(E^\vee)$-stable subspace of $(E^\vee)^{\otimes k}$.\qed
\end{lemma}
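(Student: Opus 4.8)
The plan is to realize $\Alt^k(E^\vee)$ as the image of the symmetrization operator on the tensor power, which is automatically a $\GL(E^\vee)$-stable subspace. Write $V = E^\vee$, and let the symmetric group $S_k$ act on $V^{\otimes k}$ by permuting tensor factors; since the $\GL(V)$-action is diagonal, it commutes with this $S_k$-action. Define the $\GL(V)$-equivariant endomorphism
\[
s\colon V^{\otimes k} \to V^{\otimes k}, \qquad s(v_1 \otimes \cdots \otimes v_k) = \sum_{\sigma \in S_k} v_{\sigma(1)} \otimes \cdots \otimes v_{\sigma(k)}.
\]
Over $\bF_2$ there is no sign, so this symmetrization is literally the antisymmetrization operator that computes exterior powers in characteristic zero. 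I claim $\operatorname{im}(s)$ is the required subspace; being the image of an equivariant map it is $\GL(V)$-stable, so the entire content is to identify $\operatorname{im}(s) \cong \Alt^k(V)$.

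First I would show that $s$ annihilates every tensor with a repeated factor, so that it descends to a surjection $\bar s\colon \Alt^k(V) \twoheadrightarrow \operatorname{im}(s)$. If $x$ has equal factors $v$ in positions $i$ and $j$, then the transposition $(i\,j)$ fixes $x$; pairing each $\sigma \in S_k$ with $\sigma(i\,j)$ gives $\sigma\cdot x + \sigma(i\,j)\cdot x = 2\,\sigma\cdot x = 0$, whence $s(x) = 0$. Thus the subspace $R$ of repeated-factor tensors, by which $\Alt^k(V)$ is defined, lies in $\ker s$, and $s$ factors through $\Alt^k(V)$.

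Finally I would prove $\bar s$ is injective by a dimension count. Evaluating $s$ on the standard monomial basis of $V^{\otimes k}$ (for a fixed basis $e_1,\dots,e_n$ of $V$): a non-squarefree basis tensor has a repeated factor and maps to $0$, while the squarefree tensor indexed by a $k$-subset $S \subseteq \{1,\dots,n\}$ maps to the sum of the $k!$ distinct orderings of the basis vectors in $S$. For distinct subsets $S$ these images involve pairwise disjoint collections of basis tensors, hence are linearly independent, so $\dim \operatorname{im}(s) = \binom{n}{k}$. By the second description of $\Alt^k(V)$ recalled in the text — the quotient of $\Sym^k(V)$ by non-squarefree monomials, which has the $\binom{n}{k}$ squarefree monomials as a basis — this matches $\dim \Alt^k(V)$. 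Hence the surjection $\bar s$ is an isomorphism and $\Alt^k(V) \cong \operatorname{im}(s) \subseteq V^{\otimes k}$.

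The step needing care — really the only content — is the characteristic-$2$ behavior: that $s$ genuinely \emph{kills} repeated factors rather than rescaling them (in characteristic zero $s(v \otimes v \otimes w)$ is a nonzero symmetric tensor, and $\operatorname{im}(s)$ would be the full space of symmetric tensors of dimension $\binom{n+k-1}{k}$), so that the identification with $\Alt^k(V)$ is itself a characteristic-$2$ phenomenon. Once the dimension of $\Alt^k(V)$ is pinned down from the squarefree-monomial description, the count closes the argument; alternatively one can avoid counting by verifying $\ker s = R$ directly, noting that $(v{+}w)\otimes(v{+}w)\otimes(\cdots) \in R$ forces every adjacent-transposition sum of squarefree tensors into $R$, and these together with the non-squarefree tensors span $\ker s$.
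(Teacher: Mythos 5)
Your proof is correct and coincides with the paper's own argument: the paper likewise realizes $\Alt^k(E^\vee)$ as the image of the norm (symmetrization) map for the $S_k$-action on $(E^\vee)^{\otimes k}$, which is $\GL(E^\vee)$-stable by equivariance. You have simply supplied the details the paper leaves implicit, namely that in characteristic $2$ the norm map annihilates repeated-factor tensors and the dimension count $\binom{n}{k}$ showing the induced surjection from $\Alt^k(E^\vee)$ onto the image is an isomorphism.
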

Indeed in characteristic 2, $\Alt^k(E^\vee)$ can be realized as the image of the norm map for the $S_k$-action on $(E^\vee)^{\otimes k}$:
\[
(E^\vee)^{\otimes k} \to (E^\vee)^{\otimes k} \qquad w_1 \otimes \cdots \otimes w_k \mapsto \sum_{\sigma \in S_k} w_{\sigma(1)} \otimes \cdots \otimes w_{\sigma(k)}
\]

Lemma~\ref{cohomology of an elementary 2-group} shows that there are isomorphisms of $\GL(E)$-modules
\begin{equation}
\label{eq:coh of elem 2gp}
\H^2(E;\bZ) \cong E^\vee \text{ and }\H^3(E;\bZ) \cong \Alt^2(E^\vee).
\end{equation}
A little more work gives us a useful description of $\H^4(E;\bZ)$ as well:
\begin{lemma}
\label{lem:filtration}
There is a $3$-step filtration $F_1 \subset F_2 \subset F_3 = \H^4(E;\bZ)$ by $\GL(E)$-submodules, whose associated graded modules are
\[
F_1 \cong E^\vee \qquad F_2/F_1 \cong \Alt^2(E^\vee) \qquad F_3/F_2 \cong \Alt^3(E^\vee)
\] 
\end{lemma}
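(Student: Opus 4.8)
The plan is to identify $\H^4(E;\bZ)$ with the degree-$4$ part of $\ker\Sq^1$ acting on $S := \H^\bullet(E;\bZ/2) = \Sym^\bullet(E^\vee) = \bF_2[x_1,\dots,x_n]$, and then to read off the filtration from the internal grading of $S$ by ``square content.'' By Lemma~\ref{cohomology of an elementary 2-group} the reduction map identifies $\H^4(E;\bZ)$ with $\ker(\Sq^1\colon \Sym^4(E^\vee)\to\Sym^5(E^\vee))$, and the asserted exactness of the $\Sq^1$-complex lets me instead describe this kernel as the image $\operatorname{im}(\Sq^1\colon\Sym^3(E^\vee)\to\Sym^4(E^\vee))$. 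Since $\Sq^1$ is the derivation with $\Sq^1(x)=x^2$, every monomial occurring in this image is divisible by some $x_i^2$; thus $\H^4(E;\bZ)$ lies in the $\GL(E)$-stable subspace of non-squarefree degree-$4$ monomials, and I expect exactly two jumps in square content.

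First I would isolate the top graded piece. Let $W\subset\Sym^4(E^\vee)$ be the span of the products $x_i^2x_j^2$, i.e.\ the degree-$2$ part of the subalgebra $\bF_2[x_1^2,\dots,x_n^2]$ of squares. It is $\GL(E)$-stable, is annihilated by $\Sq^1$ (so $W\subseteq\H^4(E;\bZ)$), and as a $\GL(E)$-module it is the Frobenius twist $(\Sym^2 E^\vee)^{(1)}$; since the Frobenius twist is the identity functor on $\GL_n(\bF_2)$-modules, $W\cong\Sym^2(E^\vee)$. Set $F_3=\H^4(E;\bZ)$ and $F_2=W$. To compute $F_3/F_2$ I use that $\Sq^1\colon\Sym^3(E^\vee)\to\H^4(E;\bZ)$ is a $\GL(E)$-equivariant surjection; post-composing with the quotient $\H^4(E;\bZ)\to\H^4(E;\bZ)/W$ kills every non-squarefree cubic monomial, because $\Sq^1(x_i^2x_j)=x_i^2x_j^2\in W$, so it factors through $\Sym^3(E^\vee)/(\text{non-squarefree}) = \Alt^3(E^\vee)$. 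A dimension count---$\dim\H^4(E;\bZ)=n+\binom n2+\binom n3$, extracted from the Poincaré series of the exact $\Sq^1$-complex, against $\dim W=\binom{n+1}2$---shows this surjection $\Alt^3(E^\vee)\to F_3/F_2$ is an isomorphism.

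It remains to refine $F_2=W\cong\Sym^2(E^\vee)$ into the bottom two steps. Inside $W$ the further squares $x_i^4$ span a $\GL(E)$-stable subspace isomorphic to $E^\vee$ (again a Frobenius twist $(E^\vee)^{(2)}$, hence untwisted over $\bF_2$); I set $F_1=\langle x_1^4,\dots,x_n^4\rangle$. Under the identification $W\cong\Sym^2(E^\vee)$ the quotient $F_2/F_1$ is exactly $\Sym^2(E^\vee)/\Sq^1(E^\vee)=\Alt^2(E^\vee)$, which is the definition of $\Alt^2$ recalled just after Lemma~\ref{cohomology of an elementary 2-group}. This assembles into the three-step filtration $F_1\subset F_2\subset F_3=\H^4(E;\bZ)$ with graded pieces $E^\vee$, $\Alt^2(E^\vee)$, and $\Alt^3(E^\vee)$, as claimed.

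The routine parts are the verification that each $F_j$ is genuinely $\GL(E)$-stable and the dimension bookkeeping. The one place that needs care---and the main conceptual obstacle---is the systematic suppression of Frobenius twists: the square-content grading naturally produces $(\Sym^2 E^\vee)^{(1)}$ and $(E^\vee)^{(2)}$, and one must invoke that over the prime field $\bF_2$ the Frobenius on $\GL_n(\bF_2)$ is the identity, so these twists are isomorphic as $\GL(E)$-modules to the untwisted powers. Keeping the equivariant surjection $\Sq^1\colon\Sym^3(E^\vee)\twoheadrightarrow\H^4(E;\bZ)$ explicitly in hand is what pins the top piece to be $\Alt^3(E^\vee)$ rather than merely a module of the right dimension.
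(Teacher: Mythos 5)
Your proof is correct and takes essentially the same route as the paper's: the filtration is literally the same one (fourth powers of linear forms, inside squares of quadratic forms, inside $\ker\Sq^1$ acting on $\Sym^4(E^\vee)$), with the graded pieces identified via the same squaring maps and the same surjection $\Sq^1:\Sym^3(E^\vee)\twoheadrightarrow\H^4(E;\bZ)$. The only cosmetic difference is at the top step: where you conclude that $\Alt^3(E^\vee)\to F_3/F_2$ is an isomorphism by a dimension count, the paper instead observes directly that the preimage of $F_2$ under $\Sq^1:\Sym^3(E^\vee)\twoheadrightarrow\H^4(E;\bZ)$ is exactly the span of the non-squarefree monomials.
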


See \cite[Prop. 2.2]{MR2320456} for another description of $\H^4(E;\bZ)$.

\begin{proof}
Using Lemma \ref{cohomology of an elementary 2-group}, it suffices to prove that the kernel $K$ of $\Sq^1:\Sym^4(E^\vee) \to \Sym^5(E^\vee)$ has such a filtration.  The symmetric algebra is reduced, so the squaring maps
\[
E^\vee \xrightarrow{\Sq^1} \Sym^2(E^\vee) \xrightarrow{\Sq^2} \Sym^4(E^\vee)
\]
(each of which just sends $f$ to $f^2$) are injective.  
The image of $\Sq^2$ is contained in $K$
 --- this may be seen directly, or as an instance of the Adem relation $\Sq^1 \Sq^2  = \Sq^3$, where $\Sq^3$ vanishes on $\H^2(-;\bF_2)$ by definition.  We have already noted the identification $\Sym^2(E^\vee)/\Sq^1(E^\vee) \cong \Alt^2(E^\vee)$.  To show that $K/\Sq^2(E^\vee)$ is isomorphic to $\Alt^3(E^\vee)$, we may note that $K = \Sq^1(\Sym^3(E^\vee))$ and that the preimage of $\Sq^2(\Sym^2(E^\vee))$ under $\Sq^1:\Sym^3(E^\vee) \twoheadrightarrow K$ is the subspace spanned by nonsquarefree monomials.
\end{proof}

With these remarks in hand, we may now turn to our promised upper bound on $\H^4(\Co_0;\bZ)_{(2)}$. 
As discussed already in \S\ref{preliminaries on Conway group}, the 2-Sylow in $\Co_0$ is contained in a maximal subgroup of shape $2^{12}:M_{24}$, where $2^{12}$ denotes the extended Golay code module. This $M_{24}$-module is not irreducible, and it is not isomorphic to its dual module. Following \cite{Ivanov09}, we will write $C_{12} = 2^{12} = 2.C_{11}$ for the extended Golay code module, and $C_{11}$ for its simple quotient. Its dual, the \define{extended cocode} module, is $C_{12}^\vee = C_{11}^\vee.2$ and $C_{11}^\vee$ is its simple submodule. $C_{12}^\vee$ is the unique $12$-dimensional $M_{24}$-module over $\bF_2$ with no fixed points.   

\begin{lemma} \label{Co0 even lemma}
$\H^4(C_{12}:M_{24};\bZ)_{(2)} = \bZ/8 \oplus \bZ/4$.
\end{lemma}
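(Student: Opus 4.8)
The plan is to run the Lyndon--Hochschild--Serre spectral sequence for the split extension $C_{12} \to C_{12}:M_{24} \to M_{24}$, localized at the prime $2$, with $E_2^{p,q} = \H^p(M_{24}; \H^q(C_{12};\bZ))$. First I would identify the coefficient rows using the structure of the cohomology of the elementary abelian $2$-group (writing $E = C_{12}$ and $E^\vee = C_{12}^\vee$) recorded in Lemmas~\ref{cohomology of an elementary 2-group}--\ref{lem:filtration}: the row $q=1$ vanishes, since a finite abelian group has no integral $\H^1$, while $\H^2(E;\bZ) \cong E^\vee$, $\H^3(E;\bZ) \cong \Alt^2(E^\vee)$, and $\H^4(E;\bZ)$ carries the three-step $M_{24}$-equivariant filtration with graded pieces $E^\vee$, $\Alt^2(E^\vee)$, $\Alt^3(E^\vee)$.

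Second, total degree $4$ receives contributions only from $E_2^{4,0}$, $E_2^{2,2}$, $E_2^{1,3}$, and $E_2^{0,4}$, the slot $(p,q)=(3,1)$ vanishing because the $q=1$ row does. Here $E_2^{4,0} = \H^4(M_{24};\bZ)_{(2)} \cong \bZ/4$ by \eqref{eq:Ellis-M24}; because the extension splits, the section $M_{24}\to C_{12}:M_{24}$ makes inflation a split injection, so all differentials into the bottom row vanish and $E_\infty^{4,0} = \bZ/4$ is a genuine direct summand. The terms $E_2^{2,2} = \H^2(M_{24}; C_{12}^\vee)$ and $E_2^{1,3} = \H^1(M_{24}; \Alt^2(C_{12}^\vee))$ I would compute with Cohomolo, and the invariants $E_2^{0,4} = \H^4(C_{12};\bZ)^{M_{24}}$ I would read off from the filtration of Lemma~\ref{lem:filtration}, using that $(C_{12}^\vee)^{M_{24}} = 0$ (the cocode has no fixed points) and evaluating $\Alt^2(C_{12}^\vee)^{M_{24}}$ and $\Alt^3(C_{12}^\vee)^{M_{24}}$ from the Golay-code combinatorics (or directly in GAP). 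The expectation, to be confirmed by these computations, is that each of these three off-base terms is a single copy of $\bZ/2$.

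Third, I would control the differentials and extensions, which is the main obstacle. The relevant maps touching these slots are $d_2\colon E_2^{0,4}\to E_2^{2,3}$, $d_2\colon E_2^{1,3}\to E_2^{3,2}$, the incoming $d_2\colon E_2^{0,3}\to E_2^{2,2}$, and the later $d_3\colon E_3^{0,4}\to E_3^{3,2}$; confirming that the three $\bZ/2$'s survive to $E_\infty$ amounts to computing the neighbouring groups $\H^2(M_{24};\Alt^2(C_{12}^\vee))$, $\H^3(M_{24}; C_{12}^\vee)$, and $\Alt^2(C_{12}^\vee)^{M_{24}}$ and checking the differentials vanish — this, together with the $\Alt^k$-invariant calculations, is where the Cohomolo and GAP input is essential. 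Granting this, the $E_2$ page yields $|\H^4(C_{12}:M_{24};\bZ)_{(2)}| \leq 32$ together with a splitting $\bZ/4 \oplus Q$, where $Q$ is filtered by the three surviving copies of $\bZ/2$ and hence has order $8$. Finally I would invoke the lower bound: by transfer-restriction (Lemma~\ref{transfer restriction}) and Lemma~\ref{Co0 order 2 lower bound}, the group $\H^4(C_{12}:M_{24};\bZ)_{(2)}$ contains the order-$8$ class $\frac{p_1}2(\Leech\otimes\bR)$ restricted to $C_{12}:M_{24}$. Since the $\bZ/4$ summand splits off, an element of order $8$ forces the complementary group $Q$ of order $8$ to have exponent $8$, i.e. $Q \cong \bZ/8$. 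Hence $\H^4(C_{12}:M_{24};\bZ)_{(2)} \cong \bZ/8 \oplus \bZ/4$, as claimed.
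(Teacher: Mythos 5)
Your setup --- the LHS spectral sequence, the identification of the coefficient rows via Lemmas~\ref{cohomology of an elementary 2-group}--\ref{lem:filtration}, the splitting off of the bottom row, and the final lower-bound argument --- matches the paper. The genuine problem is your third step: you propose to confirm that the three off-base copies of $\bZ/2$ survive to $E_\infty$ by computing the neighbouring groups and ``checking the differentials vanish.'' That step would fail as described. Computing a target group only kills a differential when the target is zero, and here it is not: the paper itself shows (in the discussion of $\Co_1$ in Section~\ref{section: other groups}) that $\H^2\bigl(M_{24};\Alt^2(C_{12}^\vee)\bigr) \cong \bZ/2$, which is exactly the target $E_2^{2,3}$ of the $d_2$ out of $E_2^{0,4}$. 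Nor is there any soft reason for that differential to vanish: for the sibling module $C_{11}$ the analogous $d_2$ is \emph{nonzero} --- this is precisely why the triple-intersection class does not survive for $C_{11}:M_{24}$ and why $\H^4(\Co_1;\bZ)$ is $\bZ/12$ rather than $\bZ/24$. Moreover Cohomolo computes $\H^1$ and $\H^2$ with twisted coefficients; it does not compute LHS differentials, nor $\H^3(M_{24};C_{12}^\vee)$, the target of the $d_2$ out of $E_2^{1,3}$. So the verifications you defer to ``where the Cohomolo and GAP input is essential'' are not ones the software will do for you.

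The fix is to delete that step, which is what the paper does: you do not need to know the classes survive before invoking the lower bound. Each $E_\infty^{p,q}$ is a subquotient of $E_2^{p,q}$, so with no differential analysis at all you get $\H^4(C_{12}:M_{24};\bZ)_{(2)} \cong \bZ/4 \oplus Q$ with $|Q| \leq 2\cdot 2\cdot 2 = 8$. (For $E_2^{0,4}$ even an upper bound suffices: left-exactness of $\H^0$, together with the vanishing of $\H^0(M_{24};C_{12}^\vee)$ and $\H^0\bigl(M_{24};\Alt^2(C_{12}^\vee)\bigr)$, gives an injection $\H^0\bigl(M_{24};\H^4(C_{12};\bZ)\bigr) \hookrightarrow \H^0\bigl(M_{24};\Alt^3(C_{12}^\vee)\bigr) \cong \bZ/2$. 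Note, relatedly, that invariants of a filtered module need not equal the invariants of the associated graded, so ``reading off'' $E_2^{0,4}$ from the graded pieces only ever yields this bound.) Then the order-$8$ class supplied by Lemmas~\ref{transfer restriction} and~\ref{Co0 order 2 lower bound} forces $|Q| = 8$ and $Q \cong \bZ/8$, exactly as in your last paragraph; the survival of all three copies of $\bZ/2$ and the nontriviality of the extension are then \emph{consequences} of this, not hypotheses for it. With the logic reordered this way, your argument is the paper's proof.
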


\begin{proof}
The LHS spectral sequence for $\H^\bullet(C_{12}:M_{24};\bZ)_{(2)}$ begins
\begin{equation}
\label{eq:m24ss}
\begin{array}{cccccc}
 \H^0\bigl(M_{24};H^4(C_{12};\bZ)\bigr) \\
 & \H^1\bigl(M_{24};\Alt^2(C_{12}^\vee)\bigr)\\
 && \H^2\bigl(M_{24};C_{12}^\vee\bigr) \\
 0 & 0 & 0 & 0 \\ \hdashline
 * & 0 & 0 & 0 & \bZ/{4} 
\end{array}
\end{equation}
The dashed line reminds that the bottom row splits off as a direct summand, and we have used equation~\eqref{eq:coh of elem 2gp} for the values of $\H^i(C_{12};\bZ)$. The computation of $\H^4(M_{24};\bZ)_{(2)} \cong \bZ/4$ is due to~\cite{SE09}.  

$C_{12}$ is a submodule of the permutation module $\bF_2^{24}$.  In one basis of $C_{12}$, we find the ATLAS generators for $M_{24}$ act by the matrices
\begin{equation}\label{eq:C12v}
\mbox{\footnotesize$\left(\begin{array}{p{.5ex}p{.5ex}p{.5ex}p{.5ex}p{.5ex}p{.5ex}p{.5ex}p{.5ex}p{.5ex}p{.5ex}p{.5ex}p{.5ex}}
0 & 1 & 0 & 0 & 0 & 0 & 0 & 0 & 0 & 0 & 0 & 0\\
1 & 0 & 0 & 0 & 0 & 0 & 0 & 0 & 0 & 0 & 0 & 0\\
0 & 0 & 0 & 1 & 0 & 0 & 0 & 0 & 0 & 0 & 0 & 0\\
0 & 0 & 1 & 0 & 0 & 0 & 0 & 0 & 0 & 0 & 0 & 0\\
0 & 0 & 0 & 0 & 0 & 1 & 0 & 0 & 0 & 0 & 0 & 0\\
0 & 0 & 0 & 0 & 1 & 0 & 0 & 0 & 0 & 0 & 0 & 0\\
0 & 0 & 0 & 0 & 0 & 0 & 1 & 0 & 0 & 0 & 0 & 0\\
0 & 0 & 0 & 0 & 0 & 0 & 0 & 0 & 1 & 0 & 0 & 0\\
0 & 0 & 0 & 0 & 0 & 0 & 0 & 1 & 0 & 0 & 0 & 0\\
0 & 0 & 0 & 0 & 0 & 0 & 0 & 0 & 0 & 0 & 1 & 0\\
0 & 0 & 0 & 0 & 0 & 0 & 0 & 0 & 0 & 1 & 0 & 0\\
0 & 0 & 0 & 0 & 0 & 0 & 1 & 0 & 0 & 0 & 0 & 1
\end{array}\right)$}
\hspace{.5in}
\mbox{\footnotesize$\left(\begin{array}{p{.5ex}p{.5ex}p{.5ex}p{.5ex}p{.5ex}p{.5ex}p{.5ex}p{.5ex}p{.5ex}p{.5ex}p{.5ex}p{.5ex}}
0 & 0 & 1 & 0 & 0 & 0 & 0 & 0 & 0 & 0 & 0 & 0\\
0 & 1 & 1 & 0 & 0 & 0 & 0 & 0 & 0 & 0 & 0 & 0\\
1 & 0 & 1 & 0 & 0 & 0 & 0 & 0 & 0 & 0 & 0 & 0\\
0 & 0 & 0 & 0 & 1 & 0 & 0 & 0 & 0 & 0 & 0 & 0\\
0 & 0 & 0 & 0 & 0 & 0 & 1 & 0 & 0 & 0 & 0 & 0\\
0 & 0 & 0 & 0 & 0 & 0 & 0 & 1 & 0 & 0 & 0 & 0\\
0 & 0 & 0 & 1 & 0 & 0 & 0 & 0 & 0 & 0 & 0 & 0\\
0 & 0 & 0 & 0 & 0 & 0 & 0 & 0 & 0 & 1 & 0 & 0\\
1 & 0 & 1 & 1 & 0 & 1 & 1 & 1 & 1 & 0 & 0 & 0\\
0 & 0 & 0 & 0 & 0 & 1 & 0 & 0 & 0 & 0 & 0 & 0\\
1 & 0 & 0 & 1 & 0 & 1 & 1 & 1 & 0 & 0 & 1 & 0\\
0 & 0 & 0 & 0 & 0 & 0 & 0 & 0 & 0 & 0 & 0 & 1
\end{array}\right)$}
\end{equation}
The action of these matrices by right multiplication on row vectors gives $C_{12}^\vee$.  This is a suitable input for Cohomolo, which verifies
\[
\H^2(M_{24};C_{12}^\vee) \cong \bZ/2.
\]
The action of \eqref{eq:C12v} on $\Alt^2(C_{12}^\vee)$ and $\Alt^3(C_{12}^\vee)$ gives larger matrix representations which are still small enough to be handled by Cohomolo:
\[
\H^1\bigl(M_{24};\Alt^2(C_{12}^\vee)\bigr) \cong \H^0\bigl(M_{24};\Alt^3(C_{12}^\vee)\bigr) \cong \bZ/2.
\]
(The nontrivial fixed point in $\Alt^3(C_{12}^\vee)$ is the \define{triple intersection} sending Golay codewords $a,b,c \subset \{1,\dots,24\}$ to $|a \cap b \cap c| \mod 2$.)

Since $\H^0$ is left exact and $\H^0(M_{24};C_{12}^\vee) = \H^0(M_{24};\Alt^2(C_{12}^\vee)) = 0$, Lemma \ref{lem:filtration} shows the map $\H^0(M_{24};\H^4(C_{12};\bZ)) \to \H^0(M_{24};\Alt^3(C_{12}^\vee))$ is an injection.
By Lemma~\ref{Co0 order 2 lower bound}, there is an element of order $8$ in $\H^4(2^{12}:M_{24};\bZ)$. From this we can conclude first that $\H^0(M_{24};H^4(C_{12};\bZ))$ is non-zero, hence isomorphic to $\bZ/2$, and second that all of the displayed groups in \eqref{eq:m24ss} survive to $E_{\infty}$ and participate in a nontrivial extension, proving the Lemma.
\end{proof}

To complete the proof of Theorem~\ref{conway theorem}, it suffices to prove:

\begin{lemma} \label{p2}
 The restriction map $\H^4(\Co_0;\bZ) \to \H^4(2^{12}:M_{24};\bZ)$ is not a surjection.
\end{lemma}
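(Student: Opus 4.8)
The plan is to mimic the prime-$3$ argument of Lemma~\ref{p3 finish}: exhibit a single class in $\H^4(2^{12}:M_{24};\bZ)_{(2)}$ that cannot be the restriction of any class on $\Co_0$, because it separates two elements of $2^{12}:M_{24}$ that are conjugate inside $\Co_0$. The natural candidate is $c_2(\Perm)$, where $\Perm:M_{24}\to\rO(24)$ is the degree-$24$ permutation representation, pulled back to $G = 2^{12}:M_{24}$ along the quotient $G\to M_{24}$. Because $\Perm$ factors through $M_{24}$, the restriction $c_2(\Perm)|_{\langle g\rangle}$ depends only on the image $\bar g\in M_{24}$; any class pulled back from $\Co_0$, by contrast, must agree on $\Co_0$-conjugate cyclic subgroups (under the identification sending generator to generator). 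Producing a conflict between these two constraints proves non-surjectivity, and combined with the transfer-restriction Lemma~\ref{transfer restriction} together with Lemmas~\ref{Co0 even lemma} and~\ref{Co0 order 2 lower bound} this forces $\H^4(\Co_0;\bZ)_{(2)}\cong\bZ/8$, completing Theorem~\ref{conway theorem}.

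First I would record the value of $c_2(\Perm)$ on cyclic $2$-subgroups. Working in $\H^\bullet(\bZ/4;\bZ)=\bZ[v]/(4v)$, a routine splitting-principle computation shows that for an element $g$ of order $4$ whose $M_{24}$-image has $c$ four-cycles one has $c_2(\Perm)|_{\langle g\rangle} = (2c^2-3c)\,v^2$, which is nonzero in $\H^4(\bZ/4;\bZ)\cong\bZ/4$ precisely when $c\not\equiv 0\pmod 4$; in particular it vanishes whenever $\bar g$ is an involution ($c=0$). This last point is the extra flexibility available at $p=2$: since every $x\in 2^{12}$ has order $2$ and $(x,\pi)^2=(x+\pi(x),\pi^2)$, an element $(x,\pi)\in G$ with $\pi$ an involution but $x+\pi(x)\neq 0$ has order $4$ in $G$ while its $M_{24}$-image $\pi$ contributes $c=0$, so that $c_2(\Perm)$ restricts to $0$ on $\langle(x,\pi)\rangle$.

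Next I would run a computer search (in GAP/Sage, using the degree-$24$ signed-permutation action on $\Leech\otimes\bR\cong\bR^{24}$, under which $\tr((x,\pi),\Leech)=\sum_{\pi(i)=i}(-1)^{x_i}$) for a pair of order-$4$ elements $g=(x,\pi)$ and $b=(y,\rho)$ of $G$ with $\pi$ an involution, so $c_2(\Perm)|_{\langle g\rangle}=0$, and $\rho$ an element whose number of four-cycles satisfies $c\not\equiv0\pmod 4$, so $c_2(\Perm)|_{\langle b\rangle}\neq 0$, such that $g$ and $b$ are conjugate in $\Co_0$. Once such a pair is exhibited, $c_2(\Perm)$ assigns different values to the $\Co_0$-conjugate subgroups $\langle g\rangle$ and $\langle b\rangle$, so $c_2(\Perm)$ lies outside the image of the restriction map and the map is not a surjection. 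Heuristically such a pair should exist, since a signed transposition $\bigl(\begin{smallmatrix}0&-1\\1&0\end{smallmatrix}\bigr)$ contributes eigenvalues $\pm i$ while an ordinary four-cycle contributes $1,-1,i,-i$, so the two local cycle structures can assemble into the same global characteristic polynomial on Leech.

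The main obstacle is the conjugacy verification. At the prime $3$ the four order-$3$ classes of $\Co_0$ have distinct Leech-traces, so matching traces alone certified conjugacy; at the prime $2$ there are many more classes of order-$2$ and order-$4$ elements, and the trace---or even the full characteristic polynomial, i.e.\ the Frame shape on Leech---need not determine the $\Co_0$-conjugacy class. I would therefore use equal Frame shape as the first filter and, whenever two classes share a Frame shape, settle the matter by a direct conjugacy test inside $\Co_0$ using its $24$-dimensional integral representation. This is computationally heavy given $|\Co_0|\sim 8\times10^{18}$, but it is within reach of the standard conjugacy machinery once the two candidate elements have been pinned down.
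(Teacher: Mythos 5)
Your overall strategy --- pull a characteristic class of $M_{24}$ back to $2^{12}:M_{24}$ along the projection and show it takes different values on two elements that become conjugate in $\Co_0$ --- is exactly the strategy of the paper's proof, and your preliminary computations are correct: $c_2(\Perm)|_{\langle g\rangle}$ does vanish whenever the $M_{24}$-image of $g$ is an involution (the pullback $\H^4(\bZ/2;\bZ)\to\H^4(\bZ/4;\bZ)$ is zero), your formula on order-$4$ images is right (it reduces to $-c\,v^2 \bmod 4$), and six ``twisted'' $2$-cycles do assemble the same characteristic polynomial as six $4$-cycles. But the proof has a genuine gap at its crucial step: you never establish that a $\Co_0$-conjugate pair $(g,b)$ of the required kind exists. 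You correctly observe that equal Frame shape does not certify conjugacy (the $167$ classes of $\Co_0$ realize only $160$ Frame shapes), and you then defer the decision to a ``direct conjugacy test'' on two $24\times 24$ integer matrices in a group of order about $8\times 10^{18}$. As written, this is a plan for a computation rather than a proof: the lemma is only established if that search succeeds, and nothing in your argument guarantees that it does.

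The paper sidesteps this entirely by working with involutions instead of order-$4$ elements. $\Co_0$ has exactly four conjugacy classes of involutions, and they are distinguished by their traces on $\Leech$ (namely $-24$, $+8$, $-8$, and $0$); hence an element $g\in M_{24}$ of class $2\rB$ (cycle type $2^{12}$, trace $0$) and a weight-$12$ Golay codeword $x\in 2^{12}$ (trace $0$) are conjugate in $\Co_0$ with no computation at all. The price is that $c_2$ becomes useless there: on these involutions $c_2(\Perm)$ restricts to $\binom{12}{2}u^2 = 66\,u^2 = 0$ in $\H^4(\bZ/2;\bZ)\cong\bZ/2$, so it cannot separate them. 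The separating information is genuinely $2$-torsion and is carried by $\frac{p_1}2(\Perm)$ (recall $2\cdot\frac{p_1}2 = -c_2$), which restricts to $\frac{12}{4}\neq 0$ on class $2\rB$ and to $0$ on every codeword. So the paper's substitution of $\frac{p_1}2$ for your $c_2$ is not cosmetic; it is exactly what makes a computation-free conjugacy certificate possible. To salvage your order-$4$ version you would need either to actually carry out and report the conjugacy test, or to argue from Kondo's table of Frame shapes that the shape you target (e.g.\ $4^6$) is realized by a unique conjugacy class of $\Co_0$ --- either way, more than what you have written.
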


\begin{proof}
 Let $\Perm : M_{24} \to \rO(24)$ denote the $\bR$-linear permutation representation of $M_{24}$ --- it is isomorphic to $\Leech \otimes \bR\vert_{M_{24}}$.  As for $\Co_0$, $M_{24}$ is superperfect and every real representation has a unique lift to $\Spin$ --- we will abuse notation and denote this lift by $\Perm:M_{24} \to \Spin(24)$ as well.  Of course we have
\[
\frac{p_1}2(\Perm) = \frac{p_1}2(\Leech|_{M_{24}}) \qquad \text{in } \H^4(M_{24};\bZ) 
\]
but we claim that the pullback of $\frac{p_1}{2}(\Perm)$ along the projection $(C_{12}:M_{24}) \to M_{24}$ does not extend to a class on $\Co_0$.
 
 To see this, let $g \in M_{24}$ be an element of the $M_{24}$-conjugacy class 2B (acting with cycle structure $2^{12}$).  Then $\langle g\rangle \to \Spin(24)$ projects to 12 copies of the trivial representation plus 12 copies of the sign representation, and because of this
 \[
 \frac{p_1}2(\Perm)|_{\langle g\rangle} = \frac{12}{4} \neq 0 \qquad \text{in } \H^4(\langle g \rangle;\bZ) \cong \bZ/2.
 \] 
Of course, for $x \in C_{12}$, $\Perm|_{\langle x\rangle}$ is the trivial representation, and so $\frac{p_1}2(\Perm)|_{\langle x\rangle} = 0$.

If $\frac{p_1}2(\Perm)$ is the restriction of a class $\lambda \in \H^4(\Co_0;\bZ)$, then its restriction $\lambda|_{\langle x\rangle}$ depends only on the conjugacy class of $x$ in $\Co_0$. Thus to show that such a $\lambda$ does not exist, it suffices to find an element $x \in 2^{12}$ conjugate in $\Co_0$ to $g \in M_{24}$.
 
 According to \cite{MR723071}, $\Co_0$ has four conjugacy classes of order $2$: the central element $c\in \Co_0$; two that cover the conjugacy class 2A in $\Co_1$; and one that covers the conjugacy class 2C in $\Co_1$. The conjugacy class 2B in $\Co_1$ lifts to order $4$ in $\Co_0$. These classes are distinguished by their traces:
 \[\trace(c,\Leech) = -24, \quad \trace(\text{lifts of 2A},\Leech) = \pm 8, \quad \trace(\text{lift of 2C},\Leech) = 0. \]
The codewords in $C_{12}$ with Hamming length $8$, $12$, $16$, and $24$ respectively act on $\Leech$ with trace $8$, $0$, $-8$, and $-24$.

Thus codewords of Hamming length $12$ are conjugate in $\Co_0$ to elements of $M_{24}$ of $M_{24}$-conjugacy class 2B. But $\frac{p_1}2(\Perm) \in \H^4(2^{12}:M_{24};\bZ)$ vanishes on all codewords and does not vanish on class 2B, and so cannot extend to a cohomology class on $\Co_0$.
\end{proof}

\section{\texorpdfstring{$\H^4(M_{24};\bZ)$}{H4(M24;Z)}, \texorpdfstring{$\H^4(M_{23};\bZ)$}{H4(M23;Z)}, and \texorpdfstring{$\H^4(\Co_1;\bZ)$}{H4(Co1;Z)}}

\label{section: other groups}

In outline, our proof of Theorem~\ref{conway theorem} had the following structure, for $G = \Co_0$:
\begin{enumerate}
  \item Quickly determine $\H^4(G;\bZ)_{(p)} = 0$ for large primes $p$ for which $G$ has a very simple $p$-Sylow subgroup.
  \item Find a characteristic class $\alpha \in \H^4(G;\bZ)$ and a small subgroup $C \subset G$ such that $\alpha|_C$ has large order. This provides a lower bound on $\H^4(G;\bZ)$.
  \item For small primes $p$, find a subgroup of $G$ containing the $p$-Sylow of shape $p^n:J$. Compute the $E_2$-page of the LHS spectral sequence for $\H^4(p^n:J;\bZ)$. This provides a preliminary upper bound on $\H^4(G;\bZ)_{(p)}$.
  \item Find a characteristic class in $\H^4(J;\bZ)$ whose pullback to $\H^4(p^n:J;\bZ)$ distinguishes elements that are conjugate in $G$, and so doesn't extend to $G$. This narrows the upper bound on $\H^4(G;\bZ)_{(p)}$ to agree with the lower bound, completing the proof.
\end{enumerate}

In this section we will discuss, via examples, the extent to which this strategy works for other groups.  We will give new proofs of the isomorphisms $\H^4(M_{24};\bZ) = \bZ/12$ and $\H^4(M_{23};\bZ) = 0$ essentially following the steps (1)--(4).  But we will see that the strategy fails for $\Co_1$ --- it turns out that the bound from step (3) is insufficiently sharp. A more serious version of this obstacle is encountered when trying to compute $\H^4$ of the Monster, see~\cite[\S 3.5]{JFmoonshine} for some discussion.  Nevertheless for $\Co_1$, we are able to deduce $\H^4(\Co_1;\bZ) = \bZ/12$ from a simple reduction to Theorem~\ref{conway theorem}.
\medskip

We first confirm \eqref{eq:Ellis-M24}, due originally to \cite{SE09}:

\begin{theorem}\label{m24 theorem}
  The group cohomology $\H^4(M_{24};\bZ)$ is isomorphic to $\bZ/12$. Let $\Perm : M_{24} \to S_{24}$ denote the defining permutation representation and $\Perm \otimes \bC$ the corresponding complex representation.  
  The Chern class $c_2(\Perm \otimes \bC)$ generates a subgroup of index $2$ in $\H^4(M_{24};\bZ)$; since $\H^2(M_{24};\bZ) = \H^3(M_{24};\bZ) = 0$, the real representation $\Perm \otimes \bR$ carries a unique spin structure, and $\frac{p_1}2(\Perm \otimes \bR)$ is a distinguished generator of $\H^4(M_{24};\bZ)$. Let $\langle 12\rB\rangle \subset M_{24}$ denote the cyclic subgroup generated by an element of conjugacy class $12\rB$; then the restriction map $\H^4(M_{24};\bZ) \to \H^4(\langle 12\rB \rangle;\bZ)$ is an isomoprhism.
\end{theorem}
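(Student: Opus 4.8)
The plan is to run the four-step strategy of this section with $G = M_{24}$, using transfer-restriction (Lemma~\ref{transfer restriction}) to work prime by prime. Since $|M_{24}| = 2^{10}\cdot 3^3\cdot 5\cdot 7\cdot 11\cdot 23$, the primes $5,7,11,23$ each have cyclic Sylow subgroup of order $p$, and the power-conjugacy relations among elements of order $p$ always supply some $a\ne\pm1$ with $g\sim g^a$; the pigeonhole argument of Lemma~\ref{lemma p7} (in which $g\mapsto g^a$ acts by $a^2\ne1$ on $\H^4(\langle g\rangle;\bZ)\cong\bZ/p$) then forces $\H^4(M_{24};\bZ)_{(p)}=0$, handling step~(1). (For $p=7$ it is cleanest to restrict instead to the maximal subgroup $L_2(7)\subset M_{24}$, which has a single class of order $7$, so that $\H^\bullet(L_2(7);\bZ)_{(7)}$ vanishes in degrees $\le 4$.) For the prime $3$, the maximal subgroup $M_{12}:2\subset M_{24}$ has index $1288$, coprime to $3$, hence contains a $3$-Sylow; by Lemma~\ref{transfer restriction} and~\eqref{eq:H42M12}, $\H^4(M_{24};\bZ)_{(3)}$ injects into $\H^4(M_{12}:2;\bZ)_{(3)}\cong\H^4(M_{12};\bZ)\cong\bZ/3$, giving the upper bound $\H^4(M_{24};\bZ)_{(3)}\le\bZ/3$.

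Next I would pin down the lower bound together with the statements about $c_2$ and about the restriction to $\langle 12\rB\rangle$ by one hand computation. The class $12\rB$ has cycle shape $12^2$, so $\Perm\otimes\bC$ restricts to $\langle 12\rB\rangle\cong\bZ/12$ as two copies of the regular representation $\bigoplus_{j=0}^{11}\chi^j$, where $\chi$ is a faithful character and $x=c_1(\chi)$ generates $\H^2$, with $x^2$ generating $\H^4(\bZ/12;\bZ)\cong\bZ/12$. Writing $c_2(\bigoplus\chi^{m_i})=e_2(\{m_i\})\,x^2$, the multiset of exponents is $\{0,0,1,1,\dots,11,11\}$, so $e_2=\tfrac12\bigl((\sum m_i)^2-\sum m_i^2\bigr)=\tfrac12(132^2-1012)=8206\equiv-2\pmod{12}$. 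Thus $c_2(\Perm\otimes\bC)|_{\langle 12\rB\rangle}=-2x^2$ has order $6$, and since $2\,\tfrac{p_1}2(\Perm)=-c_2(\Perm\otimes\bC)$, the restriction $\tfrac{p_1}2(\Perm)|_{\langle 12\rB\rangle}$ satisfies $2\,\tfrac{p_1}2(\Perm)|_{\langle 12\rB\rangle}=2x^2$ and so equals $x^2$ or $7x^2$ --- either way a generator of $\bZ/12$. In particular the $2$-part of $\H^4(M_{24};\bZ)$ is at least $\bZ/4$ and the $3$-part is at least $\bZ/3$, matching the bound above.

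The main obstacle is the upper bound at $p=2$, which I would obtain by imitating Lemmas~\ref{Co0 even lemma} and~\ref{p2}. Restricting to a $2$-local subgroup of the form $2^n:J$ containing a $2$-Sylow --- the octad stabilizer $2^4:A_8$, with $A_8\cong\GL_4(2)$ acting on $2^4$ as its natural $4$-dimensional module, is the most economical choice --- one runs the LHS spectral sequence, describing $\H^\bullet(2^4;\bZ)$ via~\eqref{eq:coh of elem 2gp} and Lemma~\ref{lem:filtration}. Its $E_2$-page is assembled from $\H^4(A_8;\bZ)_{(2)}$ together with the twisted groups $\H^2(A_8;(2^4)^\vee)$, $\H^1(A_8;\Alt^2((2^4)^\vee))$ and $\H^0(A_8;\Alt^3((2^4)^\vee))$, all in a range small enough for Cohomolo, exactly as in Lemma~\ref{Co0 even lemma}. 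This produces a preliminary upper bound on $\H^4(M_{24};\bZ)_{(2)}$. Following step~(4), I would then pull back a Chern class of a representation of $A_8$ (for instance its $8$-point permutation representation) and exhibit $x\in 2^4$ and $h\in A_8$ such that $xh$ is $M_{24}$-conjugate to a ``pure'' element of $A_8$ on which the class takes a different value; the resulting obstruction to extension to $M_{24}$ trims the bound to $\bZ/4$. Establishing that such a fusion occurs --- the analogue of the codeword/$2\rB$ coincidence in Lemma~\ref{p2} --- is the delicate point, and is where a brief machine check enters.

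Assembling the pieces yields $\H^4(M_{24};\bZ)\cong\bZ/4\oplus\bZ/3\cong\bZ/12$. Because $\tfrac{p_1}2(\Perm)$ restricts to a generator of $\H^4(\langle 12\rB\rangle;\bZ)\cong\bZ/12$, it has order $12$ and therefore generates $\H^4(M_{24};\bZ)$, while the restriction map $\H^4(M_{24};\bZ)\to\H^4(\langle 12\rB\rangle;\bZ)$ sends a generator to a generator and is thus an isomorphism. Finally $c_2(\Perm\otimes\bC)=-2\,\tfrac{p_1}2(\Perm)$ maps to the order-$6$ element $-2x^2$, so it generates the unique index-$2$ subgroup of $\H^4(M_{24};\bZ)$, which is the asserted statement about $c_2(\Perm\otimes\bC)$. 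This establishes every claim of the theorem.
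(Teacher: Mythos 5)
Your proposal follows the paper's own proof essentially step for step: the same treatment of $p\geq 5$ (cyclic Sylows plus pigeonhole, $L_2(7)$ at $7$), the same bound at $p=3$ via $M_{12}$, the same lower bound from class $12\rB$ (your explicit computation $e_2\equiv -2\bmod 12$ matches the paper's claim that $c_2(\Perm\otimes\bC)\vert_{\langle 12\rB\rangle}$ has order $6$), and the same $2$-local plan: the octad stabilizer $2^4{:}A_8$, the LHS spectral sequence with Cohomolo input, and non-extension of $2c_2$ of the $8$-point permutation representation of $A_8$ detected by fusion of order-$4$ elements. Two minor slips: $L_2(7)$ has two classes of order $7$, not one (harmless, since each class still contains $\{g,g^2,g^4\}$, which is all the pigeonhole needs); and the fusion step needs no machine check --- the ATLAS shows $M_{24}$ has a unique class of order-$4$ elements with fixed points, and both the pure element $g$ (octad shape $1^22^14^1$) and the translated element $xh$ (octad shape $1^42^2$) visibly fix points of the $24$-point set.

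The genuine gap is your final inference at $p=2$, where you assert that the obstruction ``trims the bound to $\bZ/4$.'' It does not do so by itself. What you have at that stage is: (i) the image $S$ of $\H^4(M_{24};\bZ)_{(2)}$ in $\H^4(2^4{:}A_8;\bZ)_{(2)}$ is a direct summand (transfer-restriction); (ii) $S$ contains the order-$4$ element $\frac{p_1}2(\Perm\otimes\bR)\vert$; and (iii) $S$ is proper, since $2c_2(\Perm_8\otimes\bC)$ does not extend. These facts alone give only $4\leq |\H^4(M_{24};\bZ)_{(2)}|\leq 8$: for instance, if the spectral sequence converged to $\bZ/4\oplus(\bZ/2)^2$, a proper summand $S\cong\bZ/4\oplus\bZ/2$ containing an order-$4$ element is not excluded by (i)--(iii) as you use them, and then $\H^4(M_{24};\bZ)\cong\bZ/12\oplus\bZ/2$, falsifying the theorem. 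The paper closes this by playing the two characteristic classes off each other. The $E_2$-page, together with the splitting of the bottom row, gives $\H^4(2^4{:}A_8;\bZ)_{(2)}\cong\bZ/4\oplus X$ with $X\in\{\bZ/2,(\bZ/2)^2,\bZ/4\}$, where the $\bZ/4$ summand is the pullback of $\H^4(A_8;\bZ)_{(2)}$ and is generated by the $2$-part of $c_2(\Perm_8\otimes\bC)$. If $X$ had exponent $2$, then doubling \emph{any} element of order $4$ lands in the order-$2$ subgroup of that $\bZ/4$; in particular $2\,\frac{p_1}2(\Perm\otimes\bR)\vert_{2^4:A_8}$ would have to equal the $2$-part of $2c_2(\Perm_8\otimes\bC)$, which is impossible because the left-hand side extends to $M_{24}$ by construction while the right-hand side does not. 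Hence $X\cong\bZ/4$, and then $S$ is a nonzero proper direct summand of $(\bZ/4)^2$; since the only direct summands of $(\bZ/4)^2$ are $0$, $\bZ/4$, and $(\bZ/4)^2$, we get $S\cong\bZ/4$. This comparison of $2\,\frac{p_1}2(\Perm\otimes\bR)$ against the non-extending class is the crux of the paper's $2$-primary argument, and it is exactly the step your outline omits.
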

\begin{proof}
  For $p=7$ and $23$, the $p$-Sylow in $M_{24}$ is contained in a maximal subgroup isomorphic to $L_2(p)$, giving $\H^4(M_{24};\bZ)_{(p)} = 0$ as in Lemma~\ref{lemma p7}. The $3$-, $5$-, and $11$-Sylows in $M_{24}$ are contained in a  subgroup isomorphic to $M_{12}$. A by-hand computation (in, for example, \cite{MilgramTezuka}) gives $\H^4(M_{12};\bZ)_{(5)} = \H^4(M_{12};\bZ)_{(11)} = 0 $ and $\H^4(M_{12};\bZ)_{(3)} = \bZ/3$. 
  
  For reasons that will become apparent, during the proof we will denote the degree-24 permutation representation of $M_{24}$ as $\Perm_{24}$.
  Conjugacy class $12\rB$ acts with cyclic structure $12^2$, from which one computes that $c_2(\Perm_{24} \otimes \bC)|_{\langle 12\rB\rangle}$ has order $6$. Since the permutation representation is spin, $c_2(\Perm_{24}\otimes \bR)$ is even. This completes the proof of the Theorem for odd primes and provides the claimed upper bound for $p=2$.
  
  The $2$-Sylow in $M_{24}$ is contained in a maximal subgroup of shape $2^4:A_8$; the action of $A_8$ on $2^4$ uses the exceptional isomorphism $A_8 \cong \mathrm{GL}(4,\bF_2)$. Using Cohomolo but not HAP, one  confirms:
  $$ 
  \H^2(A_8;\H^2(2^4;\bZ)) \cong \H^1(A_8;\H^3(2^4;\bZ)) \cong \bZ/2 \quad \text{and} \quad
  \H^0(A_8;\H^4(2^4;\bZ)) = 0.$$
  Furthermore, $\H^4(A_8;\bZ) \cong \bZ/12$.
  These provide the $E_2$-page of the LHS spectral sequence, from which we learn that $\H^4(2^4:A_8)_{(2)} \cong \bZ/4 \oplus X$, where the first summand is $\H^4(A_8;\bZ)_{(2)}$ and where $X$ is one of the groups $\bZ/2$, $(\bZ/2)^2$, or $\bZ/4$.
  
  Let $\Perm_8 \otimes \bC$ denote the $8$-dimensional complex permutation representation of $A_8$. Then $c_2(\Perm_8 \otimes \bC)$ generates $\H^4(A_8;\bZ)$~\cite{MR878978}. We claim that the pullback $2c_2(\Perm_8 \otimes \bC) \in \H^4(2^4:A_8;\bZ)$ does not extend to $\H^4(M_{24};\bZ)$. 
  Indeed, let $g \in A_8$ be an element of order $4$ which has a fixed point in the degree-8 permutation representation; its cycle structure is $1^2 2^1 4^1$, and so $2c_2(\Perm_8 \otimes\bC)|_{\langle g\rangle}$ has order $2$ in $\H^4(\langle g\rangle) \cong \bZ/4$. 
  Let $h \in A_8$ have cycle structure $1^4 2^2$; then $2c_8(\Perm_8\otimes \bC) = 0$. Choose $x \in 2^4$ such that $x$ is not fixed by $h$. 
  Then $xh \in 2^4:A_8$ has order $4$ and $2c_2(\Perm_8\otimes \bC)|_{\langle xh\rangle} = 2c_2(\Perm_8\otimes \bC)|_{\langle h\rangle} = 0$. 
  But both $xh$ and $g$ are order-$4$ elements of $M_{24}$ which fix points in the degree-24 permutation representation of $M_{24}$, and there is a unique conjugacy class of such elements. Since $2c_2(\Perm_8\otimes \bC)\in \H^4(2^4:A_8;\bZ)$ distinguished conjugate-in-$M_{24}$ elements, it cannot extend to a class on $M_{24}$.
  
  We know that $\H^4(M_{24};\bZ)_{(2)}$ contains an element of order $4$, namely $\frac{p_1}2(\Perm_{24} \otimes \bR)$. If we had $X \cong \bZ/2$ or $(\bZ/2)^2$, then we would have $2\frac{p_1}2(\Perm_{24} \otimes \bR) = 2c_2(\Perm_8\otimes \bC)\in \H^4(2^4:A_8;\bZ)$, which is impossible since $2c_2(\Perm_8\otimes \bC)$ does not extend to $M_{24}$. So $X \cong \bZ/4$ and $\H^4(M_{24};\bZ)_{(2)}$ is a direct summand of $(\bZ/4)^2$ which is nonempty (since it contains $\frac{p_1}2(\Perm_{24} \otimes \bR)$) and not everything (since it does not contain $2c_2(\Perm_8\otimes \bC)$). Thus $\H^4(M_{24};\bZ)_{(2)} \cong \bZ/4$ generated by $\frac{p_1}2(\Perm_{24} \otimes \bR)$.
\end{proof}

A very similar argument applies to $M_{23}$. The computation of $\H^4(M_{23};\bZ)$ is due to Milgram~\cite{MR1736514}:

\begin{theorem}
\label{thm:milgram}
  The group cohomology $\H^4(M_{23};\bZ)$ vanishes.
\end{theorem}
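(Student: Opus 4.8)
The plan is to run the same four-step strategy prime-by-prime, now aiming to \emph{kill} $\H^4(M_{23};\bZ)_{(p)}$ for every prime dividing $|M_{23}| = 2^7\cdot3^2\cdot5\cdot7\cdot11\cdot23$. For the large primes $p\in\{5,7,11,23\}$ the $p$-Sylow is cyclic of order $p$, so I would argue exactly as in Lemma~\ref{lemma p7}: in each case $N(\langle g\rangle)$ supplies an element conjugating a generator $g$ to $g^a$ with $a^2\not\equiv1\pmod p$ (e.g.\ $g\sim g^2$ for the single class $5\rA$; $g\sim g^2$ inside a subgroup $L_2(7)\subset A_7\subset M_{23}$ for $p=7$; $g\sim g^3$ from $11{:}5$ for $p=11$; an order-$11$ power automorphism for $p=23$). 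Since $g\mapsto g^a$ then acts on $\H^4(\langle g\rangle;\bZ)\cong\bZ/p$ by $a^2\neq1$, the transfer-restriction Lemma~\ref{transfer restriction} forces $\H^4(M_{23};\bZ)_{(p)}=0$.

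For $p=3$ the Sylow is elementary abelian, $V\cong(\bZ/3)^2$, as $M_{23}$ has no elements of order $9$. Since the Sylow is abelian, Burnside's fusion theorem identifies $\H^4(M_{23};\bZ)_{(3)}$ with the Weyl-invariants $\H^4(V;\bZ)^{W}$, where $W=N_{M_{23}}(V)/C_{M_{23}}(V)\subseteq\GL(2,3)$ and $\H^4(V;\bZ)\cong\Sym^2(V^\vee)$ is the space of quadratic forms on $V$. Now $M_{23}$ has a \emph{single} class $3\rA$ of order-$3$ elements, so $W$ is transitive on the eight nonzero vectors of $V$. Consequently $W$ acts irreducibly (a line contains only two nonzero vectors, so there is no invariant line) and preserves no nonzero quadratic form: a nondegenerate $Q$ would confine $W$ to $\rO(Q)$, which cannot mix nonzero vectors of differing $Q$-value, while a degenerate $Q$ would furnish an invariant line. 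Hence $\Sym^2(V^\vee)^{W}=0$ and $\H^4(M_{23};\bZ)_{(3)}=0$.

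The prime $p=2$ is the main work, and the expected obstacle. The $2$-Sylow lies in the maximal subgroup $2^4{:}A_7$, where $A_7$ acts on $2^4$ through its exceptional $2$-transitive action on the $15$ nonzero vectors (so $2^4$ is irreducible and $A_7\subset A_8=\GL(4,2)$). I would compute the LHS spectral sequence for $\H^\bullet(2^4{:}A_7;\bZ)_{(2)}$ exactly as in the proof of Theorem~\ref{m24 theorem}, feeding Lemma~\ref{lem:filtration} into Cohomolo to evaluate $\H^2(A_7;(2^4)^\vee)$, $\H^1(A_7;\Alt^2((2^4)^\vee))$, and $\H^0(A_7;\Alt^3((2^4)^\vee))$, together with the bottom row $\H^4(A_7;\bZ)_{(2)}$; this gives an upper bound on $\H^4(2^4{:}A_7;\bZ)_{(2)}$.

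Finally I would show that none of this upper bound survives to $M_{23}$, so that the stable-class summand $\mathrm{im}\bigl(\H^4(M_{23};\bZ)_{(2)}\to\H^4(2^4{:}A_7;\bZ)_{(2)}\bigr)$ is zero. The bottom row is handled by the detector $c_2(\Perm_7\otimes\bC)$, with $\Perm_7$ the degree-$7$ permutation representation of $A_7$: it vanishes on every vector of $2^4$ but is nonzero on an $A_7$-involution of cycle type $2^2 1^3$, whereas $M_{23}$ has a single involution class $2\rA$, so that involution is $M_{23}$-conjugate to a vector of $2^4$; thus $c_2(\Perm_7)$ separates $M_{23}$-conjugate elements and is not stable, exactly as in Theorem~\ref{m24 theorem}. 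For the order-$2$ class $2c_2(\Perm_7)$ one repeats this on order-$4$ elements (cycle type $4^1 2^1 1^1$ versus $2^2 1^3$ twisted by $2^4$), again mirroring Theorem~\ref{m24 theorem}. The genuinely hard part is the remaining off-bottom-row classes: I would try to rule out each by restricting to cyclic and small $2$-subgroups and tracking how order-$2$ and order-$4$ elements of $2^4{:}A_7$ fuse into the few classes of $2$-power order in $M_{23}$ (distinguished by their cycle types on $23$ points). The crux — and the place where the argument could require either a cleverly chosen family of detecting subgroups or direct machine computation — is that some of these mixed classes need not be detected on cyclic subgroups, so excluding a nonzero stable combination of them is the delicate final step that yields $\H^4(M_{23};\bZ)=0$.
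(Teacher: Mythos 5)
Your treatment of the odd primes is correct and in substance the same as the paper's: for $p\in\{5,7,11,23\}$ the paper uses the Frobenius subgroups $5{:}4$, $7{:}3$, $11{:}5$, $23{:}11$ exactly as you do, and for $p=3$ the paper restricts to a subgroup $3^2{:}8$ with $\bF_9^\times$ acting by multiplication, which kills invariant quadratic forms by the same transitivity-on-nonzero-vectors argument you give via Burnside fusion (your route through Swan's theorem is a valid, slightly different packaging; in fact you only need the easy containment that the image of restriction lands in the Weyl invariants, not the full identification).

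The genuine gap is at $p=2$: your proposal does not actually finish there, and you say as much. You correctly set up the LHS spectral sequence for $2^4{:}A_7$ and correctly identify the coefficient computations to feed to Cohomolo, but you then treat the off-bottom-row entries $\H^2(A_7;(2^4)^\vee)$, $\H^1(A_7;\Alt^2((2^4)^\vee))$, and $\H^0(A_7;\H^4(2^4;\bZ))$ as unknown potential contributions, and you concede that excluding a nonzero stable class involving them ``is the delicate final step'' that might require new detecting subgroups or further machine work. The paper's proof has no such step, because the computation you propose but do not carry out returns zero in all three cases:
\[
\H^2\bigl(A_7;\H^2(2^4;\bZ)\bigr) = \H^1\bigl(A_7;\H^3(2^4;\bZ)\bigr) = \H^0\bigl(A_7;\H^4(2^4;\bZ)\bigr) = 0,
\]
in contrast to the $A_8/M_{24}$ situation of Theorem~\ref{m24 theorem}, where two of the corresponding groups are $\bZ/2$ and an extension analysis is genuinely needed. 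Granting this vanishing, $\H^4(2^4{:}A_7;\bZ)_{(2)} \cong \H^4(A_7;\bZ)_{(2)} \cong \bZ/4$, so by Lemma~\ref{transfer restriction} the image of $\H^4(M_{23};\bZ)_{(2)}$ is a direct summand of $\bZ/4$, hence is $0$ or all of $\bZ/4$; your fusion argument with $c_2(\Perm_7\otimes\bC)$ and $2c_2(\Perm_7\otimes\bC)$ (which is correct --- $M_{23}$ has a single class of involutions and a single class of order-$4$ elements, so these classes separate $M_{23}$-conjugate elements) then rules out the second option, and the proof closes. So your strategy is the paper's strategy, but as written the proof is incomplete: the missing ingredient is precisely the vanishing output of the Cohomolo computation, without which your final step is not merely delicate but unresolved.
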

\begin{proof}
  The odd Sylow subgroups are contained in subgroups of shape $3^2:8$, $5:4$, $7:3$, $11:5$ and $23:11$, where in the first case $8 \cong \bF_9^\times$ acts by multiplication on $\bF_9 \cong 3^2$; thus $\H^4(M_{24};\bZ)_{(\mathrm{odd})} = 0$ as in Lemma~\ref{lemma p7}. The $2$-Sylow is contained in a maximal subgroup isomorphic to $2^4:A_7$. Furthermore:
  $$ \H^2(A_7;\H^2(2^4;\bZ)) = \H^1(A_7;\H^3(2^4;\bZ)) = \H^0(A_7;\H^4(2^4;\bZ)) = 0.$$
  It follows that $\H^4(M_{23};\bZ)_{(2)} \to \H^4(A_7;\bZ)_{(2)} \cong \bZ/4$ is an injection onto a direct summand. But, exactly as in the proof of Theorem~\ref{m24 theorem}, $2c_2(\Perm_7\otimes \bC)$ distinguishes conjugate elements in $M_{23}$, where $\Perm_7$ denotes the defining permutation representation of $A_7$.
\end{proof}

To end this section, let us show $\H^4(\Co_1;\bZ) = \bZ/12$ by a different argument.  

\begin{theorem}\label{conway corollary}
  $\H^4(\Co_1;\bZ) \cong \bZ/{12}$. 
\end{theorem}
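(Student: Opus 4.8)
The plan is to deduce $\H^4(\Co_1;\bZ) \cong \bZ/12$ from Theorem~\ref{conway theorem}, using the relationship between $\Co_0$ and $\Co_1 = \Co_0/\{\pm 1\}$. The central extension $\bZ/2 \to \Co_0 \to \Co_1$ has already been exploited in the excerpt: the pullback $\H^\bullet(\Co_1) \to \H^\bullet(\Co_0)$ is an isomorphism on odd-primary parts, so $\H^4(\Co_1;\bZ)_{(3)} \cong \H^4(\Co_0;\bZ)_{(3)} \cong \bZ/3$ and $\H^4(\Co_1;\bZ)_{(p)} = 0$ for $p \geq 5$, immediately from Theorem~\ref{conway theorem}. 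All the work is therefore concentrated at the prime $2$: I must show $\H^4(\Co_1;\bZ)_{(2)} \cong \bZ/4$, given that $\H^4(\Co_0;\bZ)_{(2)} \cong \bZ/8$.

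First I would set up the LHS spectral sequence for the central extension $1 \to \bZ/2 \to \Co_0 \to \Co_1 \to 1$, with $E_2^{i,j} = \H^i(\Co_1;\H^j(\bZ/2;\bZ))$ converging to $\H^\bullet(\Co_0;\bZ)$. Since $\bZ/2$ is central, the action on its cohomology is trivial, and $\H^\bullet(\bZ/2;\bZ)$ is $\bZ$ in degree $0$ and $\bZ/2$ in each positive even degree. The crucial structural input is that $\Co_0$ is superperfect: $\H^2(\Co_0;\bZ) = \H^3(\Co_0;\bZ) = 0$, as noted in \S\ref{p12 discussion}. Reading this through the low-degree terms of the spectral sequence forces $\H^2(\Co_1;\bZ) \cong \bZ/2$ (the class pulling back the central extension) and constrains the differentials. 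The differential $d_3 \colon E_3^{0,2} \to E_3^{3,0}$ and the transgression governing $E_2^{i,2} = \H^i(\Co_1;\bZ/2)$ are what relate $\H^4(\Co_1;\bZ)$ to $\H^4(\Co_0;\bZ)$.

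The cleanest route, which I would pursue, is to avoid delicate spectral-sequence bookkeeping by instead combining the lower bound and upper bound directly. For the lower bound: the complex representation $\Leech \otimes \bC$ of $\Co_0$ does not descend to $\Co_1$ (the central element acts by $-1$), but $c_2(\Leech \otimes \bC)$ lies in the image of $\H^4(\Co_1;\bZ)_{(2)}$ precisely when... here one checks that $\Sym^2(\Leech)$ or the adjoint-type representation $\Leech^{\otimes 2}$ does factor through $\Co_1$, furnishing a degree-$4$ class of order $4$ on $\Co_1$ whose pullback to $\Co_0$ is $2\,c_2(\Leech \otimes \bC)$ (equivalently $-4\frac{p_1}{2}$), which has order $4$ in $\bZ/8$. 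This shows $\H^4(\Co_1;\bZ)_{(2)}$ has order at least $4$. For the upper bound: I would restrict to a $2$-Sylow argument. The $2$-Sylow of $\Co_1$ is the image of the $2$-Sylow of $\Co_0$, and the group $\mathrm{CSD}/\{\pm 1\} = D_8 \times \bZ/3$ (the image of $2D_8 \times \bZ/3$ in $\Co_1$) should detect $\H^4(\Co_1;\bZ)$; computing $\H^4(D_8;\bZ)_{(2)} \cong \bZ/4 \oplus \bZ/2$ and tracking which classes are restrictions from $\Co_1$ pins the $2$-part to exactly $\bZ/4$.

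The main obstacle I anticipate is the upper bound at $p = 2$: showing that $\H^4(\Co_1;\bZ)_{(2)}$ is no larger than $\bZ/4$. The pullback map to $\H^4(\Co_0;\bZ)_{(2)} \cong \bZ/8$ has a kernel (containing the Bockstein-type class coming from the central extension, i.e. the image of $\H^2(\Co_1;\bZ) \cong \bZ/2$ under cup-square or the $d_3$-transgression), so the pullback is \emph{not} injective, and one cannot simply read off the answer from $\Co_0$. The honest work is to show this kernel is exactly $\bZ/2$ and that the image is $\bZ/4$ rather than $\bZ/8$ — equivalently, that the order-$8$ generator $\frac{p_1}{2}(\Leech \otimes \bR)$ of $\H^4(\Co_0;\bZ)$ does \emph{not} descend to $\Co_1$, while twice it does. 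This is plausible precisely because $\frac{p_1}{2}$ is tied to a spin structure on $\Leech \otimes \bR$ that is only $\Co_0$-equivariant, not $\Co_1$-equivariant. I would make this rigorous either by the spectral-sequence edge-map analysis above or, more concretely, by the detection argument through $D_8 \times \bZ/3 \subset \Co_1$, checking that the restriction of any class on $\Co_1$ to the order-$8$ McKay class in $2D_8$ factors through the quotient $D_8$ and hence can have order at most $4$.
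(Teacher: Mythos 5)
Your reduction of the odd-primary part to Theorem~\ref{conway theorem} is fine, but both halves of your argument at $p=2$ have genuine gaps. First, the lower bound fails. The pullback to $\Co_0$ of $c_2(\Sym^2(\Leech\otimes\bC))$ is indeed $2c_2(\Leech\otimes\bC)=-4\frac{p_1}2(\Leech\otimes\bR)$ (while $c_2(\Leech^{\otimes 2})=48\,c_2(\Leech\otimes\bC)=0$), but $-4\frac{p_1}2$ has order $2$, not $4$, in $\H^4(\Co_0;\bZ)_{(2)}\cong\bZ/8$, since $2\cdot 4\equiv 0 \bmod 8$. Moreover this cannot be repaired by a cleverer choice of representation: the table in Section~\ref{sec:second-chern-classes} shows that $k(V)$ is even for every irreducible $V$ on which the center of $\Co_0$ acts trivially, so for \emph{every} complex representation $V$ of $\Co_1$ the class $c_2(V)$ is a multiple of $4\frac{p_1}2$, whose $2$-part has order at most $2$. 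The $2$-primary part of $\H^4(\Co_1;\bZ)$ is simply not reached by Chern classes; the generator exhibited in the paper is the fractional Pontryagin class of the real $276$-dimensional representation, and in fact the paper's proof requires no explicit lower-bound class at all.

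Second, your structural picture of the inflation map is wrong, and internally inconsistent. Since $\Co_1$ is simple, $\H^2(\Co_1;\bZ)=\hom(\Co_1,\rU(1))=0$ (the class of the extension $2.\Co_1$ lives in $\H^2(\Co_1;\bZ/2)$), and inflation $\H^4(\Co_1;\bZ)\to\H^4(\Co_0;\bZ)$ is \emph{injective}: the LHS differentials into position $(4,0)$ originate at $E_2^{2,1}=\H^2(\Co_1;\H^1(\bZ/2;\bZ))=0$, $E_2^{1,2}=\H^1(\Co_1;\bZ/2)=0$, and $E_2^{0,3}=\H^0(\Co_1;\H^3(\bZ/2;\bZ))=0$. (Note also that a kernel of order $2$ together with image $\bZ/4$ would force $|\H^4(\Co_1;\bZ)_{(2)}|=8$, contradicting the $\bZ/4$ you are trying to prove.) This injectivity is not an obstacle to be circumvented; it is the engine of the proof. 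Combined with $E_2^{3,1}=E_2^{1,3}=0$ and $E_\infty^{0,4}=0$ (the edge map $\H^4(\Co_0;\bZ)\to\H^4(\langle\text{center}\rangle;\bZ)$ kills the generator $\frac{p_1}2$, because the center acts on $\Leech\otimes\bR$ by $-1$), the filtration collapses to a short exact sequence $0\to\H^4(\Co_1;\bZ)\to\bZ/24\to E_\infty^{2,2}\to 0$ with $E_\infty^{2,2}$ a subquotient of $E_2^{2,2}\cong\bZ/2$, giving both bounds at once: $\H^4(\Co_1;\bZ)$ is $\bZ/12$ or $\bZ/24$. Your proposed upper bound instead rests on the assertion that $D_8\times\bZ/3\subset\Co_1$ detects $\H^4(\Co_1;\bZ)$; that subgroup contains no $2$-Sylow, so transfer-restriction gives nothing, and such detection is exactly what would need proof (it is true only a posteriori). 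The one step of yours that does work is your last one: classes inflated from $\Co_1$ restrict to $2D_8\subset\Co_0$ through $\H^4(D_8;\bZ)\cong(\bZ/2)^2\oplus\bZ/4$, which has exponent $4$, while order-$8$ elements of $\H^4(\Co_0;\bZ)$ are detected by $2D_8$ (proof of Lemma~\ref{Co0 order 2 lower bound}); granted the injectivity you denied, this rules out $\bZ/24$ and is precisely how the paper concludes.
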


It can be shown that the $276$-dimensional representation of $\Co_1$ is Spin.
(Indeed, this is the adjoint rep of $\operatorname{PSO}(24) \supset \Co_1$, and the adjoint rep of $\operatorname{PSO}(2n)$ is Spin when $n = 0$ or $1$ mod $4$.)
It follows from the table at the end of Section~\ref{sec:second-chern-classes} that an explicit generator for $\H^4(\Co_1)$ is $\frac{p_1}2(276)$.

\begin{proof}
Consider the LHS spectral sequence for the extension $\Co_0 = 2.\Co_1$. Its $E_2$ page begins:
$$ 
\begin{array}{cccccc}
 2 \\
 0 & 0 & 0\\
 2 & 0 & \H^2(\Co_1;2) \\ 
 0 & 0 & 0 & 0 \\
 \bZ & 0 & 0 & \H^3(\Co_1;\bZ) & \H^4(\Co_1;\bZ) 
\end{array} 
=
\begin{array}{cccccc}
 2 \\
 0 & 0 & 0\\
 2 & 0 & 2 \\ 
 0 & 0 & 0 & 0 \\
 \bZ & 0 & 0 & 2 & \H^4(\Co_1;\bZ) 
\end{array} 
$$
Specifically, write $E_r^{i,j}$ for the $(i,j)$th entry on the $E_r$ page. On the $i+j=3$ diagonal, the groups $E_2^{21}$, $E_2^{12}$, and $E_2^{03}$ vanish. It follows that $E_{\infty}^{04} = E_2^{04} = \H^4(\Co_1;\bZ)$. On the $i+j = 4$ diagonal, the groups $E_2^{22}$ and $E_2^{04}$ are copies of $\bZ/2$, while $E_2^{31}$ and $E_2^{13}$, hence also $E_\infty^{31}$ and $E_\infty^{13}$, vanish. 

We claim that $E_\infty^{04}$ also vanishes. Indeed, the center $\bZ/2 \subset \Co_0$ acts on $\Leech \otimes \bR$ as 24 copies of the sign representation, so $\frac{p_1}{2}$ vanishes there (see Theorem \ref{restrictions theorem} for a more general statement). It follows that $\H^4(\Co_0;\bZ) \to E_2^{04}$ is zero, but $E_\infty^{04} \subset E_2^{04}$ is precisely the image of this map.

In total degree $4$, the LHS filtration on $\H^4(\Co_0;\bZ) \cong \bZ/24$ reduces therefore to a short exact sequence
\[ 
0 \to \H^4(\Co_1;\bZ) \to \bZ/24 \to E_{\infty}^{22} \to 0.
\]
But $E_{\infty}^{22}$ is a subquotient of $E_{2}^{22} = \bZ/2$. We conclude that $\H^4(\Co_1;\bZ)$ is either $\bZ/12$ or $\bZ/24$.

It remains to rule out the latter option. Equivalently, we must show that the image of $\H^4(\Co_1;\bZ)$ in $\H^4(\Co_0;\bZ)$ does not contain an element of order~$8$. 
One can detect whether a class in $\H^4(\Co_0;\bZ)$ has order~$8$ by restricting to the binary dihedral group $2D_8 \subset \Co_0$. But the composition $\H^4(\Co_1;\bZ) \to \H^4(\Co_0;\bZ) \to \H^4(2D_8;\bZ)$ factors through $\H^4(D_{8};\bZ) = (\bZ/2)^2\oplus\bZ/4$. 
\end{proof}

Theorem~\ref{conway corollary} is slightly surprising if one tries to repeat 
the strategy outlined at the beginning of this Section.
 The maximal subgroup of $\Co_1$ containing the $2$-Sylow has shape $C_{11}:M_{24}$, where as above $C_{11}$ denotes the irreducible Golay code module. The $E_2$ page of the corresponding LHS spectral sequence for $\H^\bullet(C_{11}:M_{24};\bZ)_{(2)}$ can be computed as in the proof of Lemma~\ref{Co0 even lemma}:
$$ \begin{array}{cccccc}
 2 \\
 0 & 2 & 2\\
 0 & 2 & 2 \\ 
 0 & 0 & 0 & 0 \\\hdashline
 \bZ & 0 & 0 & 0 & \bZ/{4} 
\end{array} $$
The class in $\H^0(M_{24};\H^4(C_{11};\bZ))$ is the triple intersection $(a,b,c) \mapsto | a \cap b \cap c|$ --- we defined it on $C_{12} = 2.C_{11}$, but it vanishes if any element is the all-1s vector --- and the pullback $\H^0(M_{24};\H^4(C_{11};\bZ)) \to \H^0(M_{24};\H^4(C_{12};\bZ))$ is an isomorphism. For comparison, the $E_2$ page for $\H^\bullet(C_{12}:M_{24};\bZ)_{(2)}$ is
$$ \begin{array}{cccccc}
 2 \\
 0 & 2 & 2\\
 0 & 0 & 2 \\  0 & 0 & 0 & 0 \\\hdashline
 * & 0 & 0 & 0 & \bZ/{4} 
\end{array} $$
and the triple intersection extends to an element with order $8$. How, then, can $\H^4(C_{11}:M_{24};\bZ)_{(2)}$ fail to have elements of order $8$? Shouldn't the triple intersection have order $8$ there?

The answer is that the triple intersection does not survive the LHS spectral sequence for $\H^\bullet(C_{11}:M_{24};\bZ)_{(2)}$ but does for $\H^\bullet(C_{12}:M_{24};\bZ)_{(2)}$.
The extension $\H^3(C_{12};\bZ) = \Alt^2(C_{12}^\vee) = \Alt^2(C_{11}^*).C_{11}^*$ leads to a long exact sequence in $M_{24}$-cohomology:
$$\begin{tikzpicture}[anchor=base]
 \path 
 (0,1) node {$\Alt^2(C_{11}^*)$} (3,1) node {$\Alt^2(C_{12}^\vee)$} (6,1) node {$C_{11}^*$}
 (-3,0) node {$\H^0(M_{24};-)$} (-3,-1) node {$\H^1(M_{24};-)$} (-3,-2) node {$\H^2(M_{24};-)$}
 (0,0) node (a0) {$0$} (3,0) node (b0) {$0$} (6,0) node (c0) {$0$}
 (0,-1) node (a1) {$\bZ/2$} (3,-1) node (b1) {$\bZ/2$} (6,-1) node (c1) {$\bZ/2$}
 (0,-2) node (a2) {$\bZ/2$} (3,-2) node (b2) {$\bZ/2$} (6,-2) node (c2) {$\bZ/2$}
 ;
 \draw[->] (a1) -- node[auto] {$\scriptstyle \sim$} (b1);
 \draw[->] (c1) -- node[auto,swap] {$\scriptstyle \sim$} (a2);
 \draw[->] (b2) -- node[auto] {$\scriptstyle \sim$} (c2);
\end{tikzpicture}$$
In particular, the restriction map $\H^2(M_{24};\H^3(C_{11};\bZ)) \to \H^2(M_{24};\H^3(C_{12};\bZ))$ is $0$.
This provides the room needed for the $d_2$ differential $\H^0(M_{24};\H^4(C_{11};\bZ)) \to \H^2(M_{24};\H^3(C_{11};\bZ))$ to be non-zero while the $d_2$ differential $\H^0(M_{24};\H^4(C_{12};\bZ)) \to \H^2(M_{24};\H^3(C_{12};\bZ))$ is $0$.

\section{Second Chern classes of representations}
\label{sec:second-chern-classes}

Let us index the representations of $2D_8$ as in \eqref{eq:McKay}:
\begin{equation}
\label{eq:McKay}
\begin{aligned}
\begin{tikzpicture}
  \path (0,0) node (SW) {$V_1$}
        (1,1) node (A) {$V_6$}
        (0,2) node (NW) {$V_0$}
        (2,1) node (M) {$V_4$}
        (3,1) node (B) {$V_5$}
        (4,2) node (NE) {$V_2$}
        (4,0) node (SE) {$V_3$};
  \draw (A) -- (SW); \draw (A) -- (NW); \draw (A) -- (M);
  \draw (B) -- (SE); \draw (B) -- (NE); \draw (B) -- (M);
\end{tikzpicture}
\end{aligned}
\end{equation}
Thus $M = V_6$ and $M' = V_5$ in the notation of Lemma~\ref{Co0 order 2 lower bound}.  $V_0$ is the trivial representation, and $V_1,V_2,V_3$ are the nontrivial one-dimensional representations. The kernel of $V_1$ is cyclic (of order $8$), while the kernels of $V_2$ and $V_3$ are quaternion groups of order $8$.  $V_4$ is the real dihedral representation into $\mathrm{O}(2)$, the symmetries of the square.

\begin{lemma} \label{restricting to 2D8}
Let $V$ be a representation of $\Co_0$, and suppose that $V\vert_{2D_8} = \bigoplus_{i = 0}^6 n_i V_i$.  Then
\begin{equation}
\label{eq:c2restriction}
c_2(V\vert_{2D_8}) = 4n_4 + 9n_5 + n_6 \quad \text{\rm{ }mod 16}
\end{equation}
\end{lemma}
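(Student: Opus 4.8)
The plan is to compute $c_2$ by exploiting the multiplicativity of the total Chern class: if $V\vert_{2D_8}=\bigoplus_i n_iV_i$ then $c(V\vert_{2D_8})=\prod_i c(V_i)^{n_i}$, so in degree four
\[
c_2(V\vert_{2D_8})=\sum_i n_i\,c_2(V_i)\;+\;\Big(\sum_i\binom{n_i}{2}c_1(V_i)^2+\sum_{i<j}n_in_j\,c_1(V_i)c_1(V_j)\Big).
\]
The first sum is the linear ``answer'' and the parenthesized term is a correction built from cup products of degree-two classes. I would first pin down the summands $c_2(V_i)$ and then show the correction vanishes for representations that extend to $\Co_0$. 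From Lemma~\ref{Co0 order 2 lower bound} we already know $\H^4(2D_8;\bZ)\cong\bZ/16$ is generated by $c_2(M)=c_2(V_6)$ and that $c_2(M')=c_2(V_5)=9\,c_2(M)$, while the one-dimensional $V_0,V_1,V_2,V_3$ have $c_2=0$; this accounts for the coefficients $9n_5+n_6$.

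The coefficient $4n_4$ requires $c_2(V_4)$. Here I would use the representation-theoretic identity $\Sym^2 M\cong V_1\oplus V_4$, read off from $M:2D_8\hookrightarrow\SU(2)$ and its McKay graph. The splitting principle applied to $\Sym^2$ of the standard $\SU(2)$-representation gives $c_1(\Sym^2 M)=0$ and $c_2(\Sym^2 M)=4\,c_2(M)$; comparing with $c(V_1)c(V_4)$ yields $c_1(V_4)=c_1(V_1)$ and $c_2(V_4)=4\,c_2(M)-c_1(V_1)^2$. The identity $M\otimes V_1\cong M$ (another edge of the McKay graph), together with $c_1(M)=0$, forces $c_1(V_1)^2=0$, whence $c_2(V_4)=4\,c_2(M)$, as needed.

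The delicate point is the correction term. Running the same tensor-identity/splitting-principle computation on the remaining products — e.g.\ $M\otimes V_2\cong M'$ and $V_4\otimes V_2\cong V_4$ — I expect to find that, although $c_1(V_1)^2=0$ and $c_1(V_2)c_1(V_3)=0$, the products $c_1(V_2)^2=c_1(V_3)^2=c_1(V_1)c_1(V_2)=8\,c_2(M)$ equal the unique element of order two in $\bZ/16$. This $2$-torsion class is a genuine obstruction: it restricts to zero on every proper (in particular every cyclic) subgroup of $2D_8$, so it is invisible to the restriction maps used elsewhere in the paper. This is the main obstacle, and it means the correction is generally nonzero; the lemma can hold only because $V$ extends to $\Co_0$.

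To finish I would use the $\Co_0$-structure in two steps. First, since $\H^2(\Co_0;\bZ)=0$ we have $c_1(V\vert_{2D_8})=0$, imposing the parity relations $n_2\equiv n_3$ and $n_1+n_4\equiv n_2\pmod 2$. Second, I would split $V$ by the central character of $2D_8$: the odd part involves only $V_5,V_6$, whose $c_1$ vanish, so it contributes no correction and exactly the term $9n_5+n_6$; the even part factors through $D_8=2D_8/\{\pm1\}$ and involves only $V_0,\dots,V_4$. Using the products above together with $n_2+n_3$ even, the even-part correction collapses to $\big(\binom{n_2}{2}+\binom{n_3}{2}\big)\,8c_2(M)$, which vanishes once $n_2=n_3$. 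The cleanest way to secure $n_2=n_3$ is to arrange that some element of $\Co_0$ normalizes the chosen $2D_8$ and realizes its outer automorphism interchanging the two quaternion subgroups, hence interchanging $V_2\leftrightarrow V_3$; any restriction of a $\Co_0$-representation is then invariant under this automorphism, forcing $n_2=n_3$ and killing the correction. I expect establishing this normalizer symmetry to be the crux of the argument.
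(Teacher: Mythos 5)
Your proposal is correct in substance and follows the same basic skeleton as the paper's proof (Whitney formula, $c_2(V_5)=9c_2(V_6)$ from $\Sym^3$, $\Sym^2(V_6)\cong V_1\oplus V_4$ to get $c_2(V_4)$), but it handles the degree-two cross terms by a genuinely different and in fact sharper device. The paper only ever establishes $v_1^2=0$ (via the outer automorphism exchanging $v_2,v_3$) and $v_2v_3=0$ (via $\Sym^4(V_6)\cong V_0\oplus V_2\oplus V_3\oplus V_4$), explicitly remarking that it does not know whether $v_2^2$ and $v_3^2$ vanish; it then pairs the factors as $(1+v_2t)(1+v_3t)=1+v_1t$ using $n_2=n_3$ so that $v_2^2$ never appears. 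Your McKay tensor identities determine every product: $V_6\otimes V_1\cong V_6$ forces $v_1^2=0$, $V_6\otimes V_2\cong V_5$ forces $v_2^2=c_2(V_5)-c_2(V_6)=8c_2(V_6)$, and $V_4\otimes V_2\cong V_4$ forces $v_1v_2=8c_2(V_6)$, whence $v_2v_3=v_1v_2+v_2^2=0$. These values are all correct (the tensor-product formula for a rank-two bundle times a line bundle is valid universally), so your approach resolves the ambiguity the paper leaves open, and your resulting correction term $8\bigl(\binom{n_2}{2}+\binom{n_3}{2}+(n_1+n_4)(n_2+n_3)\bigr)c_2(V_6)$, which dies exactly when $n_2=n_3$, is right.

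The one genuine gap is the step you yourself flag as the crux: securing $n_2=n_3$. You propose to exhibit an element of $\Co_0$ normalizing the chosen $2D_8$ and inducing its outer automorphism; that would suffice, but you do not produce such an element, and it is much more than is needed. The fact you want is an immediate consequence of conjugacy-class merging already established in the proof of Lemma~\ref{Co0 order 2 lower bound}: every order-$4$ element of $2D_8$ squares to the central element $c$ of $\Co_0$, and $\Co_0$ has a \emph{unique} conjugacy class of order $4$ squaring to $c$. The characters of $V_2$ and $V_3$ agree on the maximal cyclic subgroup of $2D_8$ and differ (by $\pm2$) exactly on the two order-$4$ classes outside it; since those two classes fuse in $\Co_0$, the restriction of any $\Co_0$-character takes equal values on them, which gives precisely $n_2=n_3$. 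This is how the paper argues (``merging of conjugacy classes of $2D_8$ in $\Co_0$''), and substituting it for your normalizer claim completes your proof with no further work.
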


\begin{proof}
For $i = 1,2,3$, put $v_i := c_1(V_i)$.  Then $v_1,v_2,v_3$ are the three nonzero elements of $\H^2(2D_8;\bZ) \cong \bZ/2 \oplus \bZ/2$, and $v_1 = v_2 + v_3$.  As $\H^4(2D_{8};\bZ)$ is cyclic, we must either have $v_2^2 = 0$ or $v_2^2 = 8$, and similarly for $v_3$.  As $v_2$ and $v_3$ are exchanged by an outer automorphism of $2D_8$, we have $v_2^2 = v_3^2$, and therefore $v_1^2 = 0$.  

(One may also see that $v_1^2 = 0$ by observing that $V_1$ is pulled back from a one-dimensional representation of $2D_{16}$, and that the restriction map $\H^4(2D_{16};\bZ) \to \H^4(2D_8;\bZ)$, being a map from $\bZ/32$ to $\bZ/16$, must vanish on the $2$-torsion subgroup of the domain.  We are not sure whether or not $v_2^2$ and $v_3^2$ are zero, but to prove the Lemma we will not need to know.)

In the proof of Lemma \ref{Co0 order 2 lower bound}, we have already computed the total Chern classes of $V_6 = M$ and $V_5 = M'$ ---
\[
c_t(V_6) = 1 + t^2, \qquad c_t(V_5) = 1+9t^2
\]  
--- by decomposing $\Sym^3(V_6)$ as $V_6 \oplus V_5$.  To prove the Lemma, we will appeal to the following computations:
\begin{equation}
\label{eq:sublemma}
c_t(V_4) = 1+ v_1 t + 4t^2, \qquad\text{ and } \qquad v_2 v_3 = 0.
\end{equation}
The first follows from considering the decomposition of $\Sym^2(V_6)$ and the second from considering the decomposition of $\Sym^4(V_6)$:
\begin{equation}
\label{eq:sym2sym4}
\Sym^2(V_6) = V_1 \oplus V_4, \qquad \Sym^4(V_6) = V_0 \oplus V_2 \oplus V_3 \oplus V_4.
\end{equation}
Under the identification $\H^4(B\SU(2);\bZ) \cong \bZ$, we have
\[
c_2(\Sym^n(\bC^2)) = \frac{1}{6}n^3 + \frac{1}{2}n^2 + \frac{1}{3}n,\qquad \text{i.e. 1, 4, 10, 20, 35, \ldots}
\]
so \eqref{eq:sym2sym4} gives
\[
1+4t^2 = (1+v_1t)(1+v_1 t + c_2(V_4)t^2), \qquad 1 + 20t^2 = (1 + v_2t)(1+v_3t)(1+v_1 t + c_2(V_4)t^2),
\]
which gives \eqref{eq:sublemma} upon expanding and using $v_1^2 = 0$.

Now we compute \eqref{eq:c2restriction} by considering the total Chern class of the direct sum:
\[
1^{n_0}(1+v_1 t)^{n_1}(1+v_2 t)^{n_2}(1+v_3 t)^{n_3}(1+v_1 t + 4t^2)^{n_4}(1+9t^2)^{n_5}(1+t^2)^{n_6}
\]
In fact $n_2 = n_3$ for every representation of $\Co_0$ --- this can be seen from the merging of conjugacy classes of $2D_8$ in $\Co_0$, or just by checking each irreducible representation one by one.  As 
\[
(1+v_2 t)(1+v_3 t) = 1 + (v_2+ v_3)t + v_2 v_3 t^2 = 1+v_1 t,
\]
the total Chern class of $V\vert_{2D_8}$ is
\begin{equation}
\label{eq:expand-this}
(1+v_1 t)^{n_1+n_2}(1+v_1 t + 4t^2)^{n_4}(1+9t^2)^{n_5}(1+t^2)^{n_6}
\end{equation}
Since $v_1^2 = 0$, the coefficient of $t^2$ in the expansion of \eqref{eq:expand-this} is $4n_4 + 9n_5 + n_6$.
\end{proof}

Let $\mathfrak{c}_1,\mathfrak{c}_2,\ldots,\mathfrak{c}_{167}$ be GAPs ordering of the conjugacy classes of $\Co_0$, in its library of character tables.  Then $\mathfrak{c}_1$ is the identity element and $\mathfrak{c}_2$ is the central element, and:
\begin{enumerate}
\item $\mathfrak{c}_5$ is the unique conjugacy class that squares to the central element,
\item $\mathfrak{c}_{21}$ is the unique conjugacy class that squares to $\mathfrak{c}_5$,
\item $\mathfrak{c}_{13}$ is the unique conjugacy class of order $3$ whose trace on $\Leech$ is zero.
\end{enumerate}
If $V$ is any complex representation of $\Co_0$, we have $c_2(V) = k(V) c_2(\Leech \otimes \bC)$ for some $k(V) \in \bZ/12$.
Theorem~\ref{conway theorem} implies that $k$ depends only on 
\begin{equation}
\label{eq:five-classes}
\trace(\mathfrak{c}_i, V)\text{ for }i \in \{1,2,5,21,13\}.
\end{equation}
The numbers $k(V_1),\ldots,k(V_{167})$, where $V_1,\ldots,V_{167}$ are the irreducible characters in the order that they appear in GAP's library,
are recorded in the table 
on the next page, along with the traces at~\eqref{eq:five-classes}.
Note that $V \mapsto k(V)$ 
 is a group homomorphism $R(\Co_0) \to \bZ/12$, since $c_1(V) = 0$ for every complex representation $V$ of $\Co_0$.

Incidentally, 153 of the 167 irreducible representations of $\Co_0$ are real --- it is easier to list the fourteen exceptions, which have GAP indices
\[
\begin{array}{cccccccccccccc}
17, &
18, &
27, &
28, &
121, &
122, &
125, &
126, &
128, &
129, &
135, &
136, &
142, &
143.
\end{array}
\]
Any real representation of $\Co_0$, irreducible or not, has a unique lift from $\Co_0 \to \mathrm{O}(n)$ to $\Co_0 \to \Spin(n)$, and therefore has a fractional Pontryagin class of the form $k'(V) \cdot \frac{p_1}{2}(\Leech \otimes \bR)$, with $k'(V) = -k(V) \text{ mod }12$.  The discussion of \S\ref{p12 discussion} shows that $k'$ is a homomorphism $\mathit{RO}(\Co_0) \cong \bZ^{160} \to \bZ/24$.  One could compute it if one knew the ``supercohomology'' of $2D_8$, and the string obstruction map on $\mathit{RO}(2D_8)$, but we have not done the computation.

\newpage
\[
\tiny
\begin{array}{c|c}
\begin{array}{ccccccc}
i & \mathfrak{c}_1 &  \mathfrak{c}_2 & \mathfrak{c}_5 & \mathfrak{c}_{21} & \mathfrak{c}_{13} & k(V_i)\\
1 & 1 & 1 & 1 & 1 & 1 & 0 \\
2 & 276 & 276 & 12 & 0 & 0 & 10 \\
3 & 299 & 299 & -13 & -1 & -1 & 2 \\
4 & 1771 & 1771 & 51 & 7 & 7 & 6 \\
5 & 8855 & 8855 & 15 & -1 & -7 & 10 \\
6 & 17250 & 17250 & 90 & -6 & 0 & 10 \\
7 & 27300 & 27300 & 156 & 0 & 0 & 2 \\
8 & 37674 & 37674 & -78 & -6 & 0 & 6 \\
9 & 44275 & 44275 & -77 & 7 & 1 & 8 \\
10 & 80730 & 80730 & -78 & 6 & 0 & 6 \\
11 & 94875 & 94875 & 235 & -1 & 21 & 8 \\
12 & 313950 & 313950 & -26 & 14 & 21 & 6 \\
13 & 345345 & 345345 & 377 & 1 & -21 & 6 \\
14 & 376740 & 376740 & -364 & 0 & 36 & 0 \\
15 & 483000 & 483000 & -600 & 0 & 0 & 4 \\
16 & 644644 & 644644 & 1092 & 28 & 49 & 4 \\
17 & 673750 & 673750 & 350 & -14 & -35 & 6 \\
18 & 673750 & 673750 & 350 & -14 & -35 & 6 \\
19 & 822250 & 822250 & -910 & -14 & 43 & 2 \\
20 & 871884 & 871884 & 1612 & 28 & 27 & 0 \\
21 & 1434510 & 1434510 & 246 & 6 & 0 & 6 \\
22 & 1450449 & 1450449 & 1001 & -7 & 63 & 6 \\
23 & 1771000 & 1771000 & 40 & 0 & 28 & 4 \\
24 & 1821600 & 1821600 & 352 & 0 & -36 & 0 \\
25 & 2055625 & 2055625 & 1625 & 13 & 85 & 8 \\
26 & 2417415 & 2417415 & -1001 & -21 & 84 & 8 \\
27 & 2464749 & 2464749 & -987 & 21 & 0 & 6 \\
28 & 2464749 & 2464749 & -987 & 21 & 0 & 6 \\
29 & 2816856 & 2816856 & -1080 & 0 & 0 & 0 \\
30 & 2877875 & 2877875 & 715 & -1 & 35 & 6 \\
31 & 4100096 & 4100096 & 0 & 0 & -64 & 8 \\
32 & 5494125 & 5494125 & 1365 & 21 & 0 & 2 \\
33 & 5801796 & 5801796 & -364 & -28 & 63 & 0 \\
34 & 7628985 & 7628985 & 1001 & -7 & -27 & 0 \\
35 & 9221850 & 9221850 & -1598 & -14 & 27 & 6 \\
36 & 9669660 & 9669660 & 364 & -28 & 21 & 8 \\
37 & 12432420 & 12432420 & 2028 & 0 & 0 & 6 \\
38 & 16347825 & 16347825 & -351 & -27 & 0 & 0 \\
39 & 20083140 & 20083140 & -924 & 0 & 0 & 0 \\
40 & 21049875 & 21049875 & -2325 & 15 & 0 & 6 \\
41 & 21528000 & 21528000 & 4160 & 0 & 36 & 0 \\
42 & 21579129 & 21579129 & -2223 & -27 & 162 & 6 \\
43 & 23244375 & 23244375 & 2975 & 35 & 105 & 10 \\
44 & 24174150 & 24174150 & 5278 & 14 & 21 & 2 \\
45 & 24667500 & 24667500 & -1300 & -28 & 105 & 4 \\
46 & 24794000 & 24794000 & 560 & 0 & -70 & 4 \\
47 & 25900875 & 25900875 & 3835 & -41 & -63 & 0 \\
48 & 31574400 & 31574400 & 3328 & 0 & 84 & 8 \\
49 & 40166280 & 40166280 & 2520 & 0 & 0 & 0 \\
50 & 40370176 & 40370176 & 8192 & 0 & 112 & 4 \\
51 & 44013375 & 44013375 & -1665 & 27 & 0 & 0 \\
52 & 46621575 & 46621575 & -6201 & 27 & 0 & 0 \\
53 & 51571520 & 51571520 & 0 & 0 & 56 & 4 \\
54 & 55255200 & 55255200 & -4576 & 0 & -84 & 0 \\
55 & 57544344 & 57544344 & -936 & 0 & 0 & 0 \\
56 & 60435375 & 60435375 & -3185 & 7 & -105 & 8 \\
57 & 65270205 & 65270205 & -3003 & 21 & 0 & 6 \\
58 & 66602250 & 66602250 & 4290 & 6 & 0 & 6 \\
59 & 77702625 & 77702625 & -975 & 21 & 0 & 0 \\
60 & 83720000 & 83720000 & 0 & -56 & 20 & 0 \\
61 & 85250880 & 85250880 & 7488 & 0 & 0 & 0 \\
62 & 91547820 & 91547820 & 4212 & 0 & 0 & 6 \\
63 & 100725625 & 100725625 & 6825 & -7 & -140 & 4 \\
64 & 106142400 & 106142400 & 11648 & 56 & 216 & 0 \\
65 & 109882500 & 109882500 & -9100 & -28 & 105 & 0 \\
66 & 150732800 & 150732800 & 4096 & 64 & 104 & 8 \\
67 & 163478250 & 163478250 & -1950 & -6 & 0 & 6 \\
68 & 184184000 & 184184000 & 4160 & 0 & -28 & 0 \\
69 & 185912496 & 185912496 & 11088 & 0 & 162 & 0 \\
70 & 185955000 & 185955000 & 4200 & 0 & 0 & 4 \\
71 & 191102976 & 191102976 & -4096 & -64 & 216 & 0 \\
72 & 201451250 & 201451250 & -4550 & 14 & 35 & 2 \\
73 & 205395750 & 205395750 & -5810 & -14 & 189 & 6 \\
74 & 207491625 & 207491625 & 8865 & -27 & -162 & 6 \\
75 & 210974400 & 210974400 & 0 & 0 & 0 & 0 \\
76 & 215547904 & 215547904 & 0 & 0 & 64 & 0 \\
77 & 219648000 & 219648000 & 0 & 0 & 0 & 8 \\
78 & 241741500 & 241741500 & 5460 & 0 & 0 & 10 \\
79 & 247235625 & 247235625 & 6825 & 21 & 0 & 0 \\
80 & 251756505 & 251756505 & -6903 & -27 & 0 & 0 \\
81 & 257857600 & 257857600 & -11648 & 56 & -56 & 4 \\
82 & 259008750 & 259008750 & 9750 & -6 & 0 & 6 \\
83 & 267014475 & 267014475 & 8827 & 7 & 189 & 0 \\
84 & 280280000 & 280280000 & 0 & -56 & 140 & 4 \\
\end{array}
&
\begin{array}{ccccccc}
i & \mathfrak{c}_1 &  \mathfrak{c}_2 & \mathfrak{c}_5 & \mathfrak{c}_{21} & \mathfrak{c}_{13} & k(V_i)\\
85 & 292953024 & 292953024 & -7488 & 0 & 0 & 0 \\
86 & 299710125 & 299710125 & 6565 & 13 & -216 & 6 \\
87 & 302176875 & 302176875 & 2275 & 7 & -105 & 6 \\
88 & 309429120 & 309429120 & -11648 & 0 & 84 & 0 \\
89 & 326956500 & 326956500 & -2340 & 0 & 0 & 6 \\
90 & 360062976 & 360062976 & 0 & 0 & 0 & 8 \\
91 & 387317700 & 387317700 & -4356 & 0 & 0 & 6 \\
92 & 402902500 & 402902500 & -9100 & 28 & -35 & 8 \\
93 & 464257024 & 464257024 & 4096 & -64 & -56 & 0 \\
94 & 469945476 & 469945476 & 4004 & -28 & -189 & 0 \\
95 & 469945476 & 469945476 & 9828 & 0 & 0 & 0 \\
96 & 483483000 & 483483000 & -3640 & 0 & -84 & 4 \\
97 & 502078500 & 502078500 & -1260 & 0 & 0 & 0 \\
98 & 503513010 & 503513010 & 3354 & 6 & 0 & 6 \\
99 & 504627200 & 504627200 & -4096 & 64 & 56 & 4 \\
100 & 522161640 & 522161640 & 2184 & 0 & 0 & 0 \\
101 & 551675124 & 551675124 & -9828 & 0 & 0 & 6 \\
102 & 24 & -24 & 0 & 0 & 0 & 1 \\
103 & 2024 & -2024 & 0 & 0 & 8 & 3 \\
104 & 2576 & -2576 & 0 & 0 & 8 & 2 \\
105 & 4576 & -4576 & 0 & 0 & -8 & 8 \\
106 & 40480 & -40480 & 0 & 0 & -8 & 4 \\
107 & 95680 & -95680 & 0 & 0 & -8 & 0 \\
108 & 170016 & -170016 & 0 & 0 & 0 & 4 \\
109 & 299000 & -299000 & 0 & 0 & 8 & 5 \\
110 & 315744 & -315744 & 0 & 0 & 0 & 4 \\
111 & 351624 & -351624 & 0 & 0 & 0 & 11 \\
112 & 388080 & -388080 & 0 & 0 & 0 & 6 \\
113 & 789360 & -789360 & 0 & 0 & 48 & 2 \\
114 & 1841840 & -1841840 & 0 & 0 & 8 & 6 \\
115 & 1937520 & -1937520 & 0 & 0 & 0 & 6 \\
116 & 4004000 & -4004000 & 0 & 0 & 56 & 4 \\
117 & 5051904 & -5051904 & 0 & 0 & -48 & 0 \\
118 & 6446440 & -6446440 & 0 & 0 & 112 & 7 \\
119 & 6446440 & -6446440 & 0 & 0 & -56 & 11 \\
120 & 7104240 & -7104240 & 0 & 0 & 0 & 6 \\
121 & 9152000 & -9152000 & 0 & 0 & -112 & 8 \\
122 & 9152000 & -9152000 & 0 & 0 & -112 & 8 \\
123 & 11051040 & -11051040 & 0 & 0 & 48 & 0 \\
124 & 13156000 & -13156000 & 0 & 0 & -56 & 0 \\
125 & 15002624 & -15002624 & 0 & 0 & -112 & 4 \\
126 & 15002624 & -15002624 & 0 & 0 & -112 & 4 \\
127 & 15471456 & -15471456 & 0 & 0 & 0 & 4 \\
128 & 16170000 & -16170000 & 0 & 0 & 0 & 10 \\
129 & 16170000 & -16170000 & 0 & 0 & 0 & 10 \\
130 & 17050176 & -17050176 & 0 & 0 & 0 & 0 \\
131 & 17310720 & -17310720 & 0 & 0 & 0 & 8 \\
132 & 18987696 & -18987696 & 0 & 0 & 0 & 6 \\
133 & 19734000 & -19734000 & 0 & 0 & 48 & 2 \\
134 & 34155000 & -34155000 & 0 & 0 & 0 & 9 \\
135 & 40370176 & -40370176 & 0 & 0 & 112 & 4 \\
136 & 40370176 & -40370176 & 0 & 0 & 112 & 4 \\
137 & 44204160 & -44204160 & 0 & 0 & 168 & 4 \\
138 & 49335000 & -49335000 & 0 & 0 & 0 & 1 \\
139 & 50519040 & -50519040 & 0 & 0 & 120 & 8 \\
140 & 51571520 & -51571520 & 0 & 0 & -112 & 8 \\
141 & 59153976 & -59153976 & 0 & 0 & 0 & 9 \\
142 & 59153976 & -59153976 & 0 & 0 & 0 & 9 \\
143 & 59153976 & -59153976 & 0 & 0 & 0 & 9 \\
144 & 62790000 & -62790000 & 0 & 0 & 120 & 2 \\
145 & 67358720 & -67358720 & 0 & 0 & -112 & 4 \\
146 & 106260000 & -106260000 & 0 & 0 & -120 & 0 \\
147 & 112519680 & -112519680 & 0 & 0 & -168 & 8 \\
148 & 139243104 & -139243104 & 0 & 0 & 0 & 0 \\
149 & 161161000 & -161161000 & 0 & 0 & 112 & 11 \\
150 & 190417920 & -190417920 & 0 & 0 & 168 & 0 \\
151 & 210496000 & -210496000 & 0 & 0 & 112 & 0 \\
152 & 215547904 & -215547904 & 0 & 0 & 64 & 0 \\
153 & 230230000 & -230230000 & 0 & 0 & 160 & 2 \\
154 & 247543296 & -247543296 & 0 & 0 & 168 & 0 \\
155 & 282906624 & -282906624 & 0 & 0 & -48 & 4 \\
156 & 287006720 & -287006720 & 0 & 0 & -112 & 0 \\
157 & 313524224 & -313524224 & 0 & 0 & -160 & 4 \\
158 & 342752256 & -342752256 & 0 & 0 & 0 & 0 \\
159 & 351624000 & -351624000 & 0 & 0 & 168 & 4 \\
160 & 394680000 & -394680000 & 0 & 0 & -120 & 4 \\
161 & 464143680 & -464143680 & 0 & 0 & 0 & 0 \\
162 & 485760000 & -485760000 & 0 & 0 & 120 & 0 \\
163 & 517899096 & -517899096 & 0 & 0 & 0 & 9 \\
164 & 557865000 & -557865000 & 0 & 0 & 0 & 3 \\
165 & 655360000 & -655360000 & 0 & 0 & 160 & 0 \\
166 & 805805000 & -805805000 & 0 & 0 & -280 & 3 \\
167 & 1021620600 & -1021620600 & 0 & 0 & 0 & 9 \\
\quad
\end{array}
\end{array}
\]

\newpage

\section{Restrictions to cyclic subgroups}\label{section restrictions}
\label{sec:six}

In this section we will give a formula for the restriction maps 
\begin{equation}
\label{eq:cyc-res}
\H^4(\Co_0;\bZ) \to \H^4(C;\bZ),
\end{equation}
where $C \subset \Co_0$ is any cyclic subgroup.  The domain is cyclic of order $24$, and has a distinguished generator $\frac{p_1}{2}$.  The codomain is also cyclic, of the same order as $C$.  It does not always have a distinguished generator but we give a naming scheme for the elements of its $24$-torsion subgroup that does not require any choices --- for each $k \in \bZ$ with $24k \in |C|\bZ$, there is well-defined class $kt^2 \in \H^4(C;\bZ)$.  Here $t^2$ is the cup-square of any generator $t \in \H^2(C;\bZ)$.  The fact that $kt^2$ is independent of $t$ (when $24k = 0$ modulo the order of $C$) is a consequence of what Conway and Norton call the ``defining property of $24$'' \cite[\S 3]{MR554399}: that $a^2 = 1$ mod 24 whenever $a$ is invertible mod $24$.  

Thus we may report \eqref{eq:cyc-res} by reporting an integer $k \in \bZ$ such that $\frac{p_1}{2}$ is carried to $kt^2$.  Theorem~\ref{restrictions theorem} gives a formula for $k$ in terms of the characteristic polynomial of (any generator of) $C$, regarded as a $24 \times 24$ matrix.  
That  a general formula should exist follows from the discussion in \S\ref{p12 discussion}, but our formula will apply only to the image of $\Co_0 \hookrightarrow \rO(24)$.

Actually we give the formula in terms of Frame's encoding~\cite{MR0269751} of the characteristic polynomial.
Since each element $g \in \Co_0$ preserves a lattice, its characteristic polynomial $\det(g - \lambda)$ factors uniquely as $\prod_{d|o(g)} (1-\lambda^d)^{r_d}$ for some integers $r_d \in \bZ$, and the \define{Frame shape} of $g$ is the formal expression $\prod_{d|o(g)} d^{r_d}$. Frame shapes generalize cycle structures of permutations. The Frame shapes of all elements in $\Co_0$ were computed in \cite[p.\ 355]{MR780666}: the 167 conjugacy classes in $\Co_0$ merge to only 160 different Frame shapes.

Let $\ell(g)$ denote the smallest $d$ such that the exponent $r_d$ of $d$ in the Frame shape of $g$ is non-zero. For example, $\ell(g) = 1$ if and only if $\trace(g,\bR^{24}) \neq 0$. If $g$ is a permutation matrix, then $\ell(g)$ is the length of the smallest cycle in $g$. Let $\epsilon(g) = \pm1$ record the sign of the exponent~$r_{\ell(g)}$.  We say that $g$ is \define{balanced} if there exists an $N$ such that $r_d = \epsilon(g) r_{N/d}$  for all $d$.  The notion of a balanced Frame shape specializes to the notion of a balanced cycle type in the sense of \cite[p.\ 1 item (B)]{MR554399}; in particular every element of $M_{24} \subset \Co_0$ is balanced.  The conjugacy class $8\rB$ in $\Co_0$ has Frame shape $2^{-4}8^4$, and so $\ell(8\rB) = 2$, $\epsilon(8\rB) = -1$, and $8\rB$ is balanced. Conjugacy class $4\rD$ has Frame shape $2^{-4} 4^8$ and is not balanced. The following result summarizes our calculations of $\frac{p_1}2(\Leech \otimes \bR)|_{\langle g\rangle}$:

\begin{theorem}\label{restrictions theorem}
  Suppose that $g \in \Co_0$, and use notation $t, o(g),\dots$ as above.
  \begin{enumerate}
    \item If $\ell(g) = 1$, then $\frac{p_1}2(\Leech \otimes \bR)|_{\langle g\rangle} = 0$. \label{restrictions theorem traceful case}
    \item If $g$ is balanced, then 
    $$ \frac{p_1}2(\Leech \otimes \bR)|_{\langle g\rangle} = \frac{\epsilon(g) o(g) }{ \ell(g)} t^2 $$ \label{restrictions theorem balanced case}
    \item If $g$ is not balanced, then $\frac{p_1}2(\Leech \otimes \bR)|_{\langle g\rangle} = 0$. \label{restrictions theorem unbalanced case} \qed
  \end{enumerate}
\end{theorem}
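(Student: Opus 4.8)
The plan is to reduce the class to a quadratic function of the eigenvalue data of $g$, and then to convert that into the Frame-shape quantities $\epsilon(g),o(g),\ell(g)$ by elementary number theory.

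First I would compute $\frac{p_1}2(\Leech\otimes\bR)|_{\langle g\rangle}$ straight from the definition in \S\ref{p12 discussion}. Write $n=o(g)$ and let $t$ generate $\H^2(\langle g\rangle;\bZ)=\bZ/n$. The element $g$ rotates each of the $12$ invariant $2$-planes of $\bR^{24}$ by an angle $2\pi a_i/n$. Restricting the universal class along the maximal torus of $\Spin(24)$, where $\frac{p_1}2$ is represented by $\tfrac12\sum x_i^2$, naturality gives
\[ \frac{p_1}2(\Leech\otimes\bR)\big|_{\langle g\rangle}=\tfrac12\Big(\textstyle\sum_{i=1}^{12}a_i^2\Big)\,t^2. \]
Because $\Leech\otimes\bR$ carries a unique spin structure (superperfectness), the rotation numbers $a_i$ are pinned down well enough that $\sum a_i^2$ is even and well defined modulo $2n$. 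Verifying this well-definedness — the (in)dependence on the chosen representative of each $a_i$ — is the first technical point.

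Next I would express the right-hand side through the Frame shape $\prod_{d\mid n}d^{r_d}$, whose definition via $\det(g-\lambda)=\prod_{d\mid n}(1-\lambda^d)^{r_d}$ immediately yields $\trace(g^k,\bR^{24})=\sum_{d\mid k}d\,r_d$. Since $\tfrac12\sum a_i^2$ is additive under direct sums, it is linear in the eigenvalue multiplicities and hence in the $r_d$, so it equals $\big(\sum_{d\mid n}r_d\,c(d)\big)t^2$, where $c(d)$ is the contribution of a single $d$-cycle. A direct summation gives the closed form
\[ c(d)=\frac{n^2}{48\,d}\big[(d^2-1)+3\chi_2(d)\big],\qquad \chi_2(d)=\begin{cases}1,&2\mid d,\\0,&\text{else.}\end{cases} \]
The cubic-in-$d$ part now collapses by the identity $\sum_{d\mid n}d\,r_d=\trace(g^{\,n},\bR^{24})=\trace(1)=24$ (valid because every cycle length divides $n$). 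After simplification the answer depends only on $P:=24n-A+3A_{\mathrm{ev}}\bmod 48$, where $A=\sum_{d\mid n}r_d\,\tfrac nd$ and $A_{\mathrm{ev}}=\sum_{2\mid d}r_d\,\tfrac nd$, via
\[ \frac{p_1}2(\Leech\otimes\bR)\big|_{\langle g\rangle}=\frac{n}{48}\,P\,t^2. \]

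It remains to evaluate $P\bmod 48$ in the three cases. If $g$ is balanced, the Fricke symmetry $r_d=\epsilon(g)r_{N/d}$ — with $N=\ell(g)\,o(g)$, after checking that the largest $d$ in the support equals $o(g)$ — turns the substitution $d\mapsto N/d$ into the evaluation $A=\tfrac{24\,\epsilon(g)}{\ell(g)}$; since the restricted class has order dividing $24$ one also knows $\ell(g)\mid 24$, and a congruence modulo $48$ fed by the Conway--Norton ``defining property of $24$'' ($a^2\equiv1\bmod 24$ for $\gcd(a,24)=1$) then forces $P\equiv \tfrac{48\,\epsilon(g)}{\ell(g)}$, i.e.\ $\frac{p_1}2|_{\langle g\rangle}=\tfrac{\epsilon(g)o(g)}{\ell(g)}t^2$. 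Part~(1) is the special case $\ell(g)=1$, where this reads $\pm o(g)\equiv 0$. The main obstacle is the unbalanced case, where I must show $P\equiv 0\bmod 48$: here there is no symmetry to exploit, and $P\equiv 0$ does not follow from $\sum_{d}d\,r_d=24$ alone, so the argument has to use that $g$ is a genuine isometry of the Leech lattice (integrality, reality, and non-negativity of the eigenvalue multiplicities) to constrain the $r_d$. I expect to close this either by a sharper congruence or, failing a uniform argument, by inspecting the finite list of $160$ Frame shapes of \cite{MR780666}; pinning the exact mod-$48$ constant in the balanced case — in particular controlling the $A_{\mathrm{ev}}$ term and justifying $N=\ell(g)\,o(g)$ — is the remaining delicate point.
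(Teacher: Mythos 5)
Your opening reduction is essentially the paper's: the paper restricts each $\langle g\rangle$ along a factorization through $\SU(12)\subset\Spin(24)$, where $\frac{p_1}2$ pulls back to $-c_2$, and evaluates $c_2$ on the eigenvalues of $g$ by the splitting principle --- your $\tfrac12\sum a_i^2$ is the same computation in different clothing. Note that the paper's device also disposes of your ``first technical point'': $-c_2$ of an honest $\SU(12)$-representation is automatically well defined mod $o(g)$, whereas a bare sum $\sum a_i^2$ is only well defined mod $2\,o(g)$ under consistent choices of representatives (this really does bite: shifting one representative $a\mapsto o(g)-a$ moves $\sum a_i^2$ by a multiple of $o(g)^2$, which matters after dividing by $2$). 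Even so, the paper warns that producing the $\SU(12)$ factorization itself needs care: for $4\rH$, with Frame shape $4^6$, the ``obvious'' $\rU(12)$ choice has determinant $-1$.

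The genuine gap is everything after that. For the unbalanced case you concede you have no argument, and in fact no argument of the kind you are seeking can exist: the constraints you permit yourself --- integrality, reality, and non-negativity of eigenvalue multiplicities, the degree identity $\sum_d d\,r_d=24$, and being a spin isometry of \emph{some} lattice --- do not imply statements (1) or (3). Concretely, the involution $\operatorname{diag}\bigl((+1)^{20},(-1)^{4}\bigr)$ of $\bZ^{24}$ satisfies all of them, has Frame shape $1^{16}2^4$ (so $\ell(g)=1$ and $g$ is unbalanced), lifts to $\Spin(24)$, and its $\SU(12)$ factorization has weights $(0^{10},1,1)$, whence $\frac{p_1}2=-c_2=t^2\neq 0$ in $\H^4(\bZ/2;\bZ)$. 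This is exactly what the paper means by saying it knows \emph{no} a priori reason for the theorem and that all three statements fail for other lattice-preserving elements of $\Spin(24)$: any proof must use which Frame shapes actually occur in $\Co_0$, i.e.\ Kondo's list, not just arithmetic properties shared by all lattice isometries. The balanced case has the same defect in weaker form: your decisive step (``a congruence modulo 48 \dots then forces $P\equiv 48\epsilon(g)/\ell(g)$'') is asserted rather than proved, and the inputs it needs --- $N=\ell(g)\,o(g)$, $\ell(g)\mid o(g)$, $\ell(g)\mid 24$ --- are themselves unverified facts about Conway Frame shapes whose only evident proof is inspection of the same list. So your ``fallback'' of checking the $160$ Frame shapes of \cite{MR780666} is not a fallback; it is the entire proof, and it is precisely the paper's proof (the paper is explicit that its argument is case-by-case over all $160$ shapes). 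As written, your proposal reproduces the paper's first step and then defers the actual content of the theorem.
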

Statement~(\ref{restrictions theorem traceful case}) is a consequence of~(\ref{restrictions theorem balanced case}) and~(\ref{restrictions theorem unbalanced case}), as $o(g) t^2 = 0$. We don't know any a priori reason for Theorem~\ref{restrictions theorem} to hold: all three statements (\ref{restrictions theorem traceful case}--\ref{restrictions theorem unbalanced case}) fail in general for other lattice-preserving elements of $\Spin(24)$. Our proof is case-by-case: we computed $\frac{p_1}2(\Leech \otimes \bR)$ for all 160 Frame shapes associated with the Conway group.

Specifically, we found a factorization of each $\langle g \rangle \subset \Co_0 \hookrightarrow \rO(24)$ through $\SU(12) \to \Spin(24)$.  Suppose more generally that $V : \bZ/n \to \rO(2m)$ is given. Then the $2m$ eigenvalues of $V(g)$ lie on $\mathrm{U}(1) \subset \bC$ and come in $m$ complex-conjugate pairs.  To factorize $V$ through $\rU(m)$ is equivalent to selecting one eigenvalue from each of these pairs.  To factorize through $\SU(m)$ one must select them such that their product is $1$. We found that for $\bZ/n \subset \Co_0$, this is always possible, although the ``obvious'' factorization through $\rU(12)$ sometimes fails. For example, for element $4\rH \in \Co_0$, with Frame shape $4^6$, the ``obvious'' factorization through $\rU(12)$ uses a matrix with determinant~$-1$; any ``correct'' factorization through $\SU(12)$ has spectrum which is not invariant under complex conjugation.

Having factored $\Leech \otimes \bR|_{\langle g\rangle} = V : \bZ/o(g) \to \rO(2m) = \rO(24)$ through $\bZ/o(g) \overset W \to \SU(m) \mono \rO(2m)$, we may compute $\frac{p_1}2(V)$ quickly. Indeed, $\SU(m)$ is simply connected, and so injects into $\Spin(2m)$, and the restriction map $\H^4(B\Spin(2m);\bZ) \to \H^4(B\SU(m);\bZ)$ carries $\frac{p_1}{2}$ to $-c_2$. The Cartan formula gives a recipe for the Chern classes of $W$ in terms of the eigenvalues of $W(g)$. That is how we proved Theorem~\ref{restrictions theorem}.

\section{Restrictions to umbral subgroups} \label{umbral section}

Every even unimodular lattice $L \subset \bR^{24}$ is isometric to either $\Leech$ or to one of the twenty-three Niemeier lattices.  If $L$ is a Niemeier lattice, it is characterized up to isometry by its root system $\Phi_L \subset L$ --- the vectors of length $2$ in $L$ --- and the real span of $\Phi_L$ is all of $\bR^{24}$.  Reflection through the root vectors generates a Weyl group $W_L$, which is normal in the full isometry group $\Aut(L)$.  Let $U_L := \Aut(L)/W_L$ denote the quotient group.  We will follow \cite{MR3449012} and call $U_L$ an ``umbral group''; it is called the ``glue group'' $G_1.G_2$ in \cite{MR1662447}.  For instance, the Mathieu group $M_{24}$ is an umbral group (with $L$ of type $A_1^{24}$), as is the Schur cover $2M_{12}$ of the Mathieu group $M_{12}$ ($L$ of type~$A_2^{12}$). 

For each Niemeier lattice $L$ there is a preferred (``holy'') conjugacy class of embeddings $U_L \mono \Co_0$. Two of them, for $L$ of type $A_1^{24}$ and $A_2^{12}$, have already been mentioned in \S\ref{preliminaries on Conway group} and used in Sections~\ref{prime 3} and \ref{prime 2}.  In general the theory of root systems shows that a choice of simple roots $\Delta_L \subset \Phi_L$ induces a splitting $\Aut_L = W_L:U_L$, where $U_L$ acts by faithfully permuting $\Delta_L$ and preserving its graph structure (the Dynkin diagram) --- see \cite[\S VI.1--VI.4]{MR1890629}.  Once $\Delta_L \subset L$ is fixed, the corresponding holy construction \cite[Ch.\ 24]{MR1662447} \cite[\S7]{MR791880}
 outputs a distinguished $U_L$-stable lattice $L_0 \subset \bR^{24}$, with $L_0 \cap L$ of finite index in $L$, that is isometric to Leech.  Since $\Co_0$ has no outer automorphisms the composite $U_L \mono \Aut(L_0) \cong \Co_0$ is well-defined up to conjugacy.  In this section we make some comments about the restriction map
 \begin{equation}
 \label{eq:coum}
 \H^4(\Co_0;\bZ) \to \H^4(U_L;\bZ).
 \end{equation}

The coefficients of various famous $q$-series are integer linear combinations of entries from the character tables of umbral groups, a phenomenon called \define{umbral moonshine} in \cite{MR3271175,MR3449012}. The umbral moonshine problem is to find a family of quantum field theories $\VL$, on which the umbral groups act, that would explain (by taking characters) this phenomenon. 
 These $U_L$-actions would induce cohomology classes $\alpha_L \in \H^3(U_L;\mathrm{U}(1)) \cong \H^4(U_L;\bZ)$, which we will call \define{anomalies} based on~\cite{Wen2013}.
  These anomalies have largely been characterized, in \cite{GPRV,CLW}, even in advance of knowing what $\VL$ is: in all cases the restriction of $\alpha_L$ to a cyclic subgroup $\langle g \rangle \subset U_L$ can be extracted from the modularity properties (the multiplier system) of the $q$-series corresponding to $g$ --- see \cite[\S3.3]{GPRV} and \cite[\S6]{MR3539377} --- and for all but three of the umbral groups, $\H^4(U_L;\bZ)$ is detected on cyclic subgroups.  (The exceptions are $A_2^{12}, A_3^8$, and $A_6^4$). 
\medskip

In this section we check that for a number of $L$, $\alpha_L$ is in the image of \eqref{eq:coum}, and in fact
\begin{equation}
\label{eq:not-always}
\alpha_L = \epsilon(L) \frac{p_1}{2}(\Leech \otimes \bR) \vert_{U_L}
\end{equation}
for a scalar $\epsilon(L)$ that generates $\bZ/24$ --- we warn that we are not sure that it is true in general, and do not propose any particular relationship between the $V^L$ and the Conway group, but we do hope that some of our calculations will be useful for moonshine.  For example for $L = A_1^{24}$ or 
\begin{equation}
\label{eq:cddo}
L  \in \{A^4_6, A^2_{12}, D^4_6,D^3_8,D^2_{12}, D_{24}\},
\end{equation}
we find $\epsilon(L) = -1$.  For the list \eqref{eq:cddo}, Cheng--Duncan and Duncan--O'Desky have had some qualified success in realizing $V^L$ as a free theory --- at least, for solving what Duncan calls the ``meromorphic module problem.''  One consequence of our calculations is that there is a cohomological obstruction to solving the meromorphic module problem with a free theory, and that this obstruction is not trivial for $A_3^8$.

\begin{theorem} \label{mathieu moonshine anomaly}
  Under the standard isomorphism $\H^4(G;\bZ) \cong \H^3(G;\rU(1))$ for $|G|<\infty$ given by the Bockstein for the map $x \mapsto \exp(2\pi i x)$, the restriction of $\frac{p_1}2(\Leech \otimes \bR) \in \H^4(\Co_0;\bZ)$ to $M_{24}$ is minus the anomaly  $\alpha \in \H^3(M_{24};\rU(1))$ computed by \cite{GPRV}.
\end{theorem}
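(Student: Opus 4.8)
The plan is to verify the identity $\frac{p_1}2(\Leech \otimes \bR)|_{M_{24}} = -\alpha$ inside $\H^4(M_{24};\bZ) \cong \bZ/12$ by comparing the two sides on cyclic subgroups. Both classes live in this group: the left side because $\Leech \otimes \bR|_{M_{24}} \cong \Perm \otimes \bR$ (Theorem~\ref{m24 theorem}), so it equals $\frac{p_1}2(\Perm \otimes \bR)$, a distinguished generator of $\bZ/12$; the right side because $\alpha \in \H^3(M_{24};\rU(1)) \cong \H^4(M_{24};\bZ)$ under the stated Bockstein isomorphism. Since $M_{24}$ is the umbral group of type $A_1^{24}$, which is \emph{not} one of the three exceptional types $A_2^{12}, A_3^8, A_6^4$, its fourth cohomology is detected on cyclic subgroups; indeed Theorem~\ref{m24 theorem} shows that restriction to $\langle 12\rB\rangle$ is already an isomorphism $\H^4(M_{24};\bZ) \isom \H^4(\langle 12\rB\rangle;\bZ)$. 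It therefore suffices to match the two sides after restricting to each cyclic $\langle g\rangle \subset M_{24}$.

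First I would compute the left-hand side on cyclic subgroups using Theorem~\ref{restrictions theorem}. Because $\Leech \otimes \bR|_{M_{24}} \cong \Perm \otimes \bR$, the characteristic polynomial of $g \in M_{24} \subset \Co_0$ acting on $\bR^{24}$ is exactly $\prod_d (1-\lambda^d)^{c_d}$ with $c_d$ the number of $d$-cycles of $g$; thus the Frame shape of $g$ is its cycle type, all exponents are non-negative so $\epsilon(g) = +1$, and (as noted in \S\ref{section restrictions}) every such $g$ is balanced. Theorem~\ref{restrictions theorem}(\ref{restrictions theorem balanced case}) then gives $\frac{p_1}2(\Leech \otimes \bR)|_{\langle g\rangle} = \frac{o(g)}{\ell(g)} t^2$, where $\ell(g)$ is the length of the shortest cycle of $g$ and $t$ generates $\H^2(\langle g\rangle;\bZ)$. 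For the detecting class $12\rB$, whose cycle type is $12^2$, this reads $\frac{12}{12} t^2 = t^2$, a generator, consistent with Theorem~\ref{m24 theorem}.

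Next I would rewrite the GPRV anomaly in the same normalization. The class $\alpha$ of \cite{GPRV} is characterized by its restrictions to the cyclic subgroups $\langle g\rangle$, read off from the multiplier system of the $g$-twined McKay--Thompson series of the K3 elliptic genus. I would convert each such multiplier phase into the canonical generator $t^2$ of the $24$-torsion subgroup of $\H^4(\langle g\rangle;\bZ)$ (using the ``defining property of $24$'' recalled in \S\ref{section restrictions}, which makes $k t^2$ independent of the choice of $t$ whenever $24k \in o(g)\bZ$), expecting to obtain $\alpha|_{\langle g\rangle} = -\frac{o(g)}{\ell(g)} t^2$. Comparing with the previous paragraph then yields $-\alpha|_{\langle g\rangle} = \frac{p_1}2(\Leech \otimes \bR)|_{\langle g\rangle}$ for every cyclic subgroup, and hence, since cohomology is detected on cyclic subgroups, the claimed equality of classes in $\H^4(M_{24};\bZ)$.

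The main obstacle is this third step: faithfully translating the analytic data of \cite{GPRV}---multiplier systems of (mock) modular forms---into the purely topological $t^2$ normalization of Theorem~\ref{restrictions theorem}, and pinning down the global sign through both the Bockstein identification $\H^4(M_{24};\bZ) \cong \H^3(M_{24};\rU(1))$ and the relation $2\,\frac{p_1}2 = -c_2$. Once both sides are expressed as integer multiples of $t^2$ in one common convention, the verification reduces to the mechanical check that the GPRV cycle-shape formula equals $-\frac{o(g)}{\ell(g)}$ modulo $o(g)$; and because restriction to $\langle 12\rB\rangle$ is already an isomorphism, it is in fact enough to confirm this on the single conjugacy class $12\rB$.
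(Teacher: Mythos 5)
Your overall strategy --- reduce to cyclic subgroups, compute the left-hand side by Theorem~\ref{restrictions theorem} (every $g\in M_{24}$ is balanced with $\epsilon(g)=+1$, so $\frac{p_1}2(\Leech\otimes\bR)|_{\langle g\rangle} = \frac{o(g)}{\ell(g)}t^2$), then compare with the GPRV data --- is exactly the skeleton of the paper's proof. But the step you defer as ``the main obstacle'' is the entire mathematical content of the theorem, and your proposal never carries it out: you only write that you ``expect to obtain'' $\alpha|_{\langle g\rangle} = -\frac{o(g)}{\ell(g)}\,t^2$. Since the theorem is precisely an assertion about this sign and normalization, leaving the translation between GPRV's conventions and the $t^2$ normalization as an expectation is a genuine gap, not a routine bookkeeping check.

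The paper's mechanism for closing that gap is concrete and short. The interface with \cite{GPRV} is taken to be homological rather than analytic: the anomaly $\alpha$ is characterized by its pairings with the explicit bar-complex $3$-cycles $\gamma_g = \sum_{i=0}^{o(g)-1} g\otimes g^i\otimes g$ (images of lens-space fundamental classes), namely $\langle\alpha,\gamma_g\rangle = \exp(-2\pi i/\ell(g))$. One then has to compute what $t^2$ pairs to against $\gamma_g$ under the Bockstein identification $\H^4(\langle g\rangle;\bZ)\cong\H^3(\langle g\rangle;\bR/\bZ)$. The paper does this at the cochain level: with $\tau\in\H^1(\langle g\rangle;\bR/\bZ)$ the homomorphism $g\mapsto 1/o(g)$ and $t=\beta(\tau)$ represented by the explicit ``carrying'' cocycle, the Bockstein image of $t^2$ is $\tau\cup t$, and the evaluation $\sum_{i}[\tau\cup t](g\otimes g^i\otimes g) = 1/o(g)$ (only the $i=o(g)-1$ term contributes). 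Hence $\frac{o(g)}{\ell(g)}t^2$ pairs to $\exp(+2\pi i/\ell(g))$, the inverse of GPRV's value, which is where the minus sign comes from. Without this computation (or an equivalent one), neither the global sign nor the claim that GPRV's multiplier phase equals $-\frac{o(g)}{\ell(g)}$ in your $t^2$ units is established. Your observation that it suffices to check the single class $12\rB$, because restriction to $\langle 12\rB\rangle$ is an isomorphism on $\H^4(M_{24};\bZ)$, is correct and would mildly streamline the paper's argument, but it does not remove the need for this cocycle-level comparison on that one class.
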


\begin{proof}
Given a finite group $G$ and $g\in G$ of order $o(g)$, consider the $3$-cycle
$$ \gamma_g = \sum_{i = 0}^{o(g)-1} g \otimes g^i \otimes g
$$
in the bar complex for $G$.  (If we consider the $G$-bundle on a 3d lens space $\SU(2)/o(g)$, whose monodromy around the nontrivial loop is $g$, then the homology class of $\gamma_g$ is the image under the classifying map $\SU(2)/o(g) \to BG$ of the fundamental class.)   
The anomaly $\alpha \in \H^3(M_{24};\rU(1))$ of~\cite{GPRV} is characterized by the property that for every $g\in M_{24}$, the pairing $\H^3(M_{24};\rU(1)) \otimes \H_3(M_{24}) \to \rU(1)$ takes $\alpha \otimes \gamma_g$ to $\exp(-2\pi i / \ell(g))$. In \cite{GPRV}, $\ell(g)$ is defined as the length of the shortest cycle in the degree-$24$ permutation representation --- the notation is consistent with the $\ell(g)$ in \S\ref{sec:six}, since the cycle type of the permutation and the Frame shape of its permutation matrix coincide.

Let $\tau \in \H^1(\langle g\rangle;\bR/\bZ) = \hom(\langle g\rangle;\bR/\bZ)$ denote the homomorphism sending $g \mapsto 1/o(g) + \bZ$, let $\beta : \H^k(G;\bR/\bZ) \to \H^{k+1}(G;\bZ)$ denote the Bockstein, and let $t = \beta(\tau) \in \H^2(\langle g\rangle;\bZ)$. Then $t$ can be represented by the cocycle 
$$ t \left(g^i\otimes g^j\right) = \begin{cases} 0, & i+j<o(g) \\ 1, & i+j \geq o(g) \end{cases}, \qquad i,j \in \{0,\dots,n-1\}.$$

Under the Bockstein identification $\H^4(\langle g\rangle;\bZ) \cong \H^3(\langle g \rangle;\bR/\bZ)$, the cocycle $t^2$ is carried to $\tau \cup t$, where $\cup : \H^1(\langle g\rangle;\bR/\bZ) \otimes \H^2(\langle g\rangle;\bZ) \to \H^3(\langle g \rangle;\bR/\bZ)$ denotes the cup product.
We calculate:
\[
\sum_{i=0}^{o(g)-1} \left[ \tau \cup t\right] \left(g \otimes g^i \otimes g\right) = \sum_{i=0}^{o(g)-1} \tau(g) \cdot t \left( g^i \otimes g \right) = \frac1{o(g)} \cdot 1
\]
since only the $i=o(g)-1$ term provides a non-zero value to $t\left( g^i \otimes g \right)$.
Since every $g\in M_{24}$ is balanced with $\epsilon(g) = +1$, the Theorem follows from part~(\ref{restrictions theorem balanced case}) of Theorem~\ref{restrictions theorem}.
\end{proof}

The papers \cite{UmbralDtype,UmbralAtype} construct super vertex algebras that explain some but not all of the umbral moonshine phenomena for the Niemeier lattices $L$ of type $A_3^8$, $A_4^6$, $A_6^4$, $A_{12}^2$, $D_6^4$, $D_8^3$, $D_{12}^2$, and $D_{24}$. 
These super vertex algebras are all of the following type. 
Let $U'_L = U_L$ when $L$ is of type $A_6^4$, $A_{12}^2$, $D_6^4$, $D_8^3$, $D_{12}^2$, and $D_{24}$, and let $U'_L$ denote the unique (up to conjugacy) maximal subgroup of $U_L$ isomorphic to $\mathrm{SL}_2(7)$ when $L = A_3^8$ or to $S_3 \times 4$ when $L = A_4^6$.
Two finite-dimensional complex representations, called in those papers $\mathfrak{b}^+$ and $\mathfrak{a}^+$, of $U'_L$ are selected. Specifically, they take:
  $$\begin{array}{rrrr} 
  L & U'_L & \mathfrak{b}^{+} & \mathfrak{a}^{+} \\ \hline \\[-10pt]
  A_3^8 & \mathrm{SL}_2(7) & \bC^4 & \bC^3 \\[2pt]
  A_4^6 & S_3 \times (\bZ/4) & (\bR^2 \boxtimes -i) \oplus (\mathrm{sign} \boxtimes i) & \mathrm{triv} \boxtimes (-1 \oplus i)  \\[2pt]
  A_6^4   &  2A_4  &  \bC^2 & \bC^1 \\[2pt]
  A_{12}^2  &  \bZ/4    &  i \oplus -i    &  -1 \\[2pt]
  D_6^4  &   S_4   &    \bR^3 & \bR^2 \\[2pt]
  D_8^3   &  S_3  &    \bR^2 &          \mathrm{sign} \\[2pt]
  D_{12}^2  &  S_2   &    \mathrm{sign} \oplus \mathrm{sign} &    \mathrm{triv} \\[2pt]
  D_{24}   &   \mathrm{triv} &     \mathrm{triv} &             \mathrm{triv} 
  \end{array}$$
A free field theory, also called a ``$\beta \gamma b c$ system'' (see for example \cite[Chapters 11 and 12]{MR2082709}), is then built from these representations: it consists of free bosons valued in $\mathfrak{b}^+ \oplus \mathfrak{b}^-$ and free fermions valued in $\mathfrak{a}^+ \oplus \mathfrak{a}^-$, where $\mathfrak{b}^- = (\mathfrak{b}^+)^*$ and $\mathfrak{a}^- = (\mathfrak{a}^+)^*$.  A physical argument shows that the anomaly of the $G$-action on such a system is $c_2(\mathfrak{b}^+) - c_2(\mathfrak{a}^+)$, provided that $c_1(\mathfrak{b}^+) = c_1(\mathfrak{a}^+)$.  (The field theories of \cite{UmbralDtype,UmbralAtype} also have some auxiliary free fermions, valued in a vector space called $\mathfrak{e}$, on which $G$ acts trivially. These do not affect the anomaly.) 

  We briefly explain the names for representations in the table.  
  By ``$\mathrm{sign}$'' and ``$\mathrm{triv}$'' we mean the sign and trivial representations of symmetric groups.  
  The $-1$ and $\pm i$ in the $A_4^6$ and $A_{12}^2$ rows denote the one-dimensional representations of $\bZ/4$ in which the generator acts with that eigenvalue. In the $\mathfrak{b}^+$ column, the representation $\bR^{n-1}$ of $S_n$ is the nontrivial submodule of the permutation representation.  In the $\mathfrak{a}^+$ column, the representation $\bR^2$ is the pullback of this representation of $S_3$ along the surjective homomorphism $S_4 \to S_3$ (the ``resolvent cubic'' of Galois theory).  The $\bC^n$s in the $A_3^8$ and $A_4^6$ rows are irreducible complex $n$-dimensional representations, that are specified up to simultaneous complex conjugation as follows.
  For $L = A_3^8$, these are chosen so that, if an order-7 element of $U'_L$ acts on $\mathfrak{a}^+$ with eigenvalue $\lambda$, then it acts on $\mathfrak{b}^+$ with trace $-\bar\lambda$.
  For $L = A_6^4$, these are chosen so that, if an order-3 element of $U_L$ acts on $\mathfrak{a}^+$ with eigenvalue $\lambda$, then it acts on $\mathfrak{b}^+$ with trace $-\bar\lambda$.

\begin{theorem} \label{thm umbral consistency check}
  For $L$ of type $A_6^4$, $A_{12}^2$, $D_6^4$, $D_8^3$, $D_{12}^2$, and $D_{24}$, for the $U_L$-representations $\mathfrak{b}^+$ and $\mathfrak{a}^+$ in \cite{UmbralDtype,UmbralAtype}, we have
  $c_1(\mathfrak{b}^+) = c_1(\mathfrak{a}^+)$ and
   $-\frac{p_1}2(\Leech \otimes \bR)|_{U_L} = c_2(\mathfrak{b}^+) - c_2(\mathfrak{a}^+)$.
\end{theorem}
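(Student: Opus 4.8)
The plan is to prove the two assertions --- the degree-one identity $c_1(\mathfrak{b}^+) = c_1(\mathfrak{a}^+)$ and the degree-two identity $-\frac{p_1}2(\Leech \otimes \bR)|_{U_L} = c_2(\mathfrak{b}^+) - c_2(\mathfrak{a}^+)$ --- by reducing both sides to restrictions onto a family of subgroups of $U_L$ that jointly detect $\H^4(U_L;\bZ)$, and then matching after restriction. The first identity is purely a statement about determinant characters: writing each of $\mathfrak{b}^+$, $\mathfrak{a}^+$ as a sum of the one-dimensional constituents listed in the table, I would compute $\det \mathfrak{b}^+$ and $\det \mathfrak{a}^+$ as elements of $\hom(U_L, \rU(1)) \cong \H^2(U_L;\bZ)$ and check agreement; this is a finite character-theoretic verification performed type by type. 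In the real ($D$-series) cases the relevant classes are $2$-torsion and the check is immediate, while in the complex cases one must use the normalizations of $\mathfrak{b}^+$ and $\mathfrak{a}^+$ recorded after the table to fix the representations up to simultaneous complex conjugation before comparing determinants.

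For the five types $A_{12}^2$, $D_6^4$, $D_8^3$, $D_{12}^2$, $D_{24}$ the group $\H^4(U_L;\bZ)$ is detected on cyclic subgroups (as recalled above, the only umbral exceptions are $A_2^{12}$, $A_3^8$, and $A_6^4$). So it suffices to check the degree-two identity after restriction to each cyclic $\langle g\rangle \subset U_L$. On the left, I would use the holy-construction embedding $U_L \hookrightarrow \Co_0$ to read off the Frame shape of $g$ and apply Theorem~\ref{restrictions theorem}, obtaining $-\frac{\epsilon(g)\, o(g)}{\ell(g)} t^2$ or $0$. On the right, I restrict $\mathfrak{b}^+$ and $\mathfrak{a}^+$ to $\langle g\rangle$, write each as a sum of characters $g \mapsto \zeta^{k}$, and evaluate $c_2$ by the splitting principle: $c_2$ of a sum of lines with exponents $k_1, \dots, k_m$ is $e_2(k_1,\dots,k_m)\, t^2$, the second elementary symmetric function of the exponents. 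Matching the two resulting integers modulo $o(g)$ for every $g$ settles these cases; $D_{24}$ is vacuous since $U_L$ is trivial, $D_{12}^2$ and $A_{12}^2$ involve at most the cyclic groups inside $\bZ/2$ and $\bZ/4$, and $D_8^3$ ($S_3$), $D_6^4$ ($S_4$) involve only finitely many cyclic subgroups.

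The remaining and genuinely harder case is $A_6^4$, where $U_L = 2A_4 \cong \mathrm{SL}_2(3)$ and $\H^4(2A_4;\bZ)$ is \emph{not} detected on cyclic subgroups. Here I would instead invoke transfer-restriction (Lemma~\ref{transfer restriction}) prime by prime: the Sylow subgroups are the quaternion group $Q_8$ at $p=2$ and a cyclic $\bZ/3$ at $p=3$, so $\H^4(2A_4;\bZ)$ injects into $\H^4(Q_8;\bZ)_{(2)} \oplus \H^4(\bZ/3;\bZ)_{(3)} \cong \bZ/8 \oplus \bZ/3$. The $\bZ/3$ summand is handled exactly as above via Theorem~\ref{restrictions theorem}. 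The $\bZ/8$ summand is where the work lies: $Q_8$ is a McKay subgroup of $\SU(2)$, so $\H^4(Q_8;\bZ) \cong \bZ/8$ is generated by $c_2$ of its faithful two-dimensional representation, and I would compute the restrictions $\mathfrak{b}^+|_{Q_8}$, $\mathfrak{a}^+|_{Q_8}$, and $\Leech \otimes \bR|_{Q_8}$ by character theory and evaluate all three Chern classes in $\bZ/8$ by a total-Chern-class bookkeeping entirely parallel to Lemma~\ref{restricting to 2D8}. To pin down $\frac{p_1}2(\Leech)|_{Q_8}$ itself, rather than only its double $-c_2(\Leech \otimes \bC)$, I would factor $\Leech \otimes \bR|_{Q_8}$ through $\SU(12) \hookrightarrow \Spin(24)$ as in Section~\ref{sec:six} and use that the restriction of $\frac{p_1}2$ to $\SU(12)$ is $-c_2$.

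I expect the $A_6^4$ computation on $Q_8$ to be the main obstacle, for two reasons. First, because each cyclic restriction map out of $\H^4(Q_8;\bZ) \cong \bZ/8$ has a kernel of order two, a naive cyclic-only computation would lose a factor of two; the $\SU(12)$ factorization is precisely what removes that ambiguity, and one must verify that the eigenvalue selection is compatible with the chosen orientation. Second, one must align the representation-theoretic normalizations of $\mathfrak{b}^+$ and $\mathfrak{a}^+$ (fixed only up to simultaneous complex conjugation by the conditions stated after the table) with the specific embedding $2A_4 \hookrightarrow \Co_0$ from the holy construction, so that both sides are expressed in a common generator of $\H^4(2A_4;\bZ)$. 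Once the bookkeeping on $Q_8$ and on $\bZ/3$ is in place, the injectivity supplied by transfer-restriction forces the global equality on $2A_4$, completing the proof.
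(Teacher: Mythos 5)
Your overall architecture matches the paper's: verify $c_1(\mathfrak{b}^+)=c_1(\mathfrak{a}^+)$ by character theory, handle $A_{12}^2$, $D_6^4$, $D_8^3$, $D_{12}^2$, $D_{24}$ by restricting to cyclic subgroups and comparing Theorem~\ref{restrictions theorem} with splitting-principle computations of $c_2$, and reduce $A_6^4$ to the subgroups $\bZ/3$ and $Q_8$ of $U_L\cong 2A_4$, with the $Q_8$ computation as the crux. But your tool for that crux fails: you propose to pin down $\frac{p_1}2(\Leech\otimes\bR)|_{Q_8}$ by factoring $\Leech\otimes\bR|_{Q_8}$ through $\SU(12)\hookrightarrow\Spin(24)$, and \emph{no such factorization exists}. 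Indeed $\Leech\otimes\bR|_{U_L}\cong\Perm_8^{\oplus 3}\otimes\bR$ and $\Perm_8|_{Q_8}$ is the regular representation, so $\Leech\otimes\bR|_{Q_8}\cong 3\,\bR[Q_8]\cong 3\cdot\unit\oplus 3X\oplus 3Y\oplus 3Z\oplus 3W$, where $X,Y,Z$ are the nontrivial one-dimensional representations (all of real type) and $W$ is the irreducible $4$-dimensional real form of the quaternionic representation $V$. A real representation admits an invariant complex structure only if every real-type irreducible constituent occurs with even multiplicity; here $\unit,X,Y,Z$ each occur with multiplicity $3$, so $\Leech\otimes\bR|_{Q_8}$ does not factor through $\rU(12)$ at all, let alone $\SU(12)$. (The eigenvalue-selection argument of Section~\ref{sec:six} is special to cyclic subgroups.) Without that step, your ``total-Chern-class bookkeeping parallel to Lemma~\ref{restricting to 2D8}'' only computes $c_2(\Leech\otimes\bC)|_{Q_8}=-2\frac{p_1}2(\Leech\otimes\bR)|_{Q_8}$, which determines $\frac{p_1}2|_{Q_8}\in\H^4(Q_8;\bZ)\cong\bZ/8$ only up to the element of order $2$. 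That ambiguity is exactly what is at stake: the two candidates for $-\frac{p_1}2|_{Q_8}$ are $c_2(V)$ and $5\,c_2(V)$, i.e.\ precisely the difference between the theorem's assertion and the ``$\epsilon=5$''-type discrepancy the paper encounters for $A_3^8$. So the transfer-restriction injection does not ``force the global equality''; it leaves unresolved the one bit of information your method cannot see.

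The paper resolves this with real (spin) technology rather than complex factorizations. It splits $\Perm_8|_{Q_8}=W\oplus(X\oplus Y\oplus Z)\oplus\unit$ into spin subrepresentations and uses additivity of $\frac{p_1}2$. For $W$, which genuinely factors through $\SU(2)$, one gets $\frac{p_1}2(W)=-c_2(V)$. The subtle summand is $X\oplus Y\oplus Z:Q_8\to\SO(3)$, whose spin lift is $V:Q_8\to\SU(2)\cong\Spin(3)$: the stable class $\frac{p_1}2$ restricted to $B\Spin(3)$ is \emph{twice} a generator, $\frac{p_1}2=-2c_2$ there (unlike $B\Spin(n)$ for $n\geq 5$), so $\frac{p_1}2(X\oplus Y\oplus Z)=-2c_2(V)$, not $-c_2(V)$. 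Summing, $\frac{p_1}2(\Perm_8\otimes\bR)|_{Q_8}=-3c_2(V)$, hence $\frac{p_1}2(\Leech\otimes\bR)|_{Q_8}=-9c_2(V)=-c_2(V)=-c_2(\mathfrak{b}^+)|_{Q_8}$, as the theorem requires. Any repair of your argument must confront this $\Spin(3)$ subtlety (or equivalent input, e.g.\ supercohomology): since $3\,\bR[Q_8]$ cannot be decomposed into real summands each admitting an invariant complex structure, no argument that only ever applies ``$\frac{p_1}2=-c_2$ on $\SU(m)$'' can compute the left-hand side on $Q_8$.
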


\begin{proof}
For $L$ of type $A_{12}^2$, $D_6^4$, $D_8^3$, $D_{12}^2$, and $D_{24}$, classes in $\H^4(U_L;\bZ)$ are determined by their restrictions to cyclic subgroups.  For all umbral groups, $\Leech \otimes \bR|_{U_L}$ is a permutation representation of $U_L$ on the nodes of the Dynkin diagram for the root system of $L$, and the Frame shape of an element is the cycle type of this permutation.  For a given $U_L$ one can therefore check the Theorem by proving that $c_2(\mathfrak{a}^+) - c_2(\mathfrak{b}^+)$ restricts to $\frac{o(g)}{\ell(g)} t^2$ for every $g \in U_L$, by Theorem \ref{restrictions theorem}.

We will describe the case $L = D_6^4$, where $U_L = S_4$.  The other cases from $\{A_{12}^2,D_6^4,D_8^3,D_{12}^2,D_{24}\}$ can be handled similarly.  The following table lists the nontrivial conjugacy classes of $S_4$ in terms of their cycle structures on the defining degree-$4$ permutation, their Frame shapes as elements of $\Co_0$, and their eigenvalues in $\mathfrak{b}^+$ and $\mathfrak{a}^+$. For each $g\in S_4$, it then lists the values of 
  $\frac{p_1}2(\Leech \otimes \bR)|_{\langle g\rangle}$, $c_2(\mathfrak{b}^+)|_{\langle g \rangle}$, and $c_2(\mathfrak{a}^+)|_{\langle g\rangle}$ as multiples of the canonical generator $t^2$ of $\H^4(\langle g\rangle;\bZ)$; $\frac{p_1}2$ is computed using Theorem~\ref{restrictions theorem}, and the $c_2$s are immediate:
  $$ \begin{array}{c|ccc|ccc}
   S_4 & \Leech & \mathfrak{b}^+ & \mathfrak{a}^+ & \frac{p_1}2(\Leech) & c_2(\mathfrak{b}^+) & c_2(\mathfrak{a}^+) \\[2pt] 
   \hline &&&&&& \\[-10pt]
   1^2 2^1 
   & 1^8 2^8 & -1 \oplus 1 \oplus 1 & -1 \oplus 1
   & 0 & 0 & 0
   \\
   2^2 
   & 2^{12} & -1 \oplus -1 \oplus 1 & 1 \oplus 1
   & 1 & 1 & 0
   \\
   1^1 3^1 
   & 1^6 3^6 & \lambda \oplus \bar\lambda \oplus 1 & \lambda \oplus \bar\lambda
   & 0 & -1 & -1
   \\
   4^1 
   & 4^6 & i \oplus -1 \oplus -i & -1 \oplus 1
   & 1 & -1 & 0
  \end{array} $$
  In the above table, $\lambda$ denotes a cube root of unity. For all $g\in S_4$, we find that $-\frac{p_1}2(\Leech \otimes \bR) = c_2(\mathfrak{b}^+) - c_2(\mathfrak{a}^+) \mod \operatorname{order}(g)$.
  
The only remaining case is $L = A_6^4$, which we study for the remainder of the proof. This is perhaps the most interesting case, since it is one of the three Niemeier lattices for which classes in $\H^4(U_L;\bZ)$ are not determined by their restrictions to cyclic subgroups \cite{CLW}. 

  We first verify that $c_1(\mathfrak{b}^+) = c_1(\mathfrak{a}^+)$. The characters of the two representations are:
  $$ \begin{array}{r|rrrrrrr}
  & 1\rA & 2\rA & 4\rA & 3\rA & 6\rA & 3\rB & 6\rB \\
   \hline \\[-10pt]
  \mathfrak{b}^+ & 2 & -2 & 0 & -\bar\lambda & \lambda & -\lambda & \bar\lambda \\
  \mathfrak{a}^+ & 1 & 1 & 1 & \lambda & \bar\lambda & \bar\lambda & \lambda 
  \end{array}$$
  with $\lambda = \exp(2\pi i /3)$. In particular, class $4\rA$ acts on $\mathfrak{b}^+$ with eigenvalues $i \oplus -i$, hence with determinant $1$; class $3\rA$ acts on $\mathfrak{a}^+$ with eigenvalue $\lambda$ and on $\mathfrak{b}^+$ with eigenvalues $1 \oplus \lambda$.  Also, note that $c_2(\mathfrak{a}^+) = 0$, as $\mathfrak{a}^+$ is one-dimensional --- we are left with proving
\begin{equation}
\label{eq:vercingetorix}
c_2(\mathfrak{b}^+) = -\frac{p_1}{2}(\Leech \otimes \bR\vert_{U_L})
\end{equation}

The group $U_L \cong 2A_4$ is the McKay correspondent of $E_6$, and it has a unique faithful representation into $\SU(2)$, let us denote it by $V$.  Meanwhile $\mathfrak{b}^+$ is one of the other two $2$-dimensional representations of $2A_4$.  As for any finite subgroup of $\SU(2)$, $\H^4(U_L;\bZ)$ is generated by $c_2(V)$ with order $|U_L| = 24$.

We analyze $\Leech \otimes \bR|_{U_L}$ by thinking of it as a permutation representation on the nodes of the Dynkin diagram. This action has three orbits: the nodes at the edges of the $A_6$-components, the nodes at distance one from the edges, and the nodes at distance two from the edges. These orbits are abstractly isomorphic as $U_L$-sets; we will refer to this degree-$8$ permutation representation as $\Perm_8$, so that $\Leech \otimes \bR|_{U_L} \cong \Perm_8^{\oplus 3} \otimes \bR$. Since $\Perm_8 \otimes \bR$ is a Spin representation, we have:
$$ \frac{p_1}{2}(\Leech \otimes \bR\vert_{U_L}) = 3 \frac{p_1}2(\Perm_8 \otimes \bR). $$

The restriction maps to the cyclic subgroup $\bZ/3 \subset U_L$ of order $3$ and to the quaternion subgroup $Q_8 \subset U_L$ of order $8$ give an isomorphism $\H^4(U_L;\bZ) \to \H^4(\bZ/3;\bZ) \oplus \H^4(Q_8;\bZ) \cong (\bZ/3) \oplus (\bZ/8)$.  We will prove \eqref{eq:vercingetorix} by proving 
\begin{equation}
\label{eq:viriathus}
c_2(\mathfrak{b}^+\vert_{\bZ/3}) = -3\frac{p_1}{2}(\Perm_8 \otimes \bR\vert_{\bZ/3}) = 0 \text{ and } c_2(\mathfrak{b}^+\vert_{Q_8}) = -3\frac{p_1}{2}(\Perm_8 \otimes \bR\vert_{Q_8}).
\end{equation}
For the left equation in \eqref{eq:viriathus}, note that $\mathfrak{b}^+\vert_{\bZ/3}$ splits as the sum of two one-dimensional representations, one of which is trivial, and so $c_2(\mathfrak{b}^+|_{\bZ/3}) = 0$.

We turn to the right equation in \eqref{eq:viriathus}. We have an isomorphism $\mathfrak{b}^+\vert_{Q_8} \cong V\vert_{Q_8}$, since they are both irreducible two-dimensional representations. Let $W$ denote the underlying four-dimensional real representation of $V$ and let $X$, $Y$, and $Z \cong X \otimes Y$ denote the three non-trivial one-dimensional real representations of $Q_8$. $\Perm_8\vert_{Q_8}$ is isomorphic to the regular representation $\bZ[Q_8]$, which over $\bR$ decomposes as $W \oplus X \oplus Y \oplus Z \oplus 1$.  The real representations $W$ and $X \oplus Y \oplus Z$ are each Spin, so $\frac{p_1}2(W \oplus X \oplus Y \oplus Z) = \frac{p_1}2(W) + \frac{p_1}2(X \oplus Y \oplus Z)$.

To compute $\frac{p_1}2(W)$ we can use the observation that the action $W : Q_8 \to \SO(4)$ factors through $V : Q_8 \to \SU(2)$, and so $$\frac{p_1}2(W) = -c_2(V).$$
  
To compute $\frac{p_1}2(X \oplus Y \oplus Z)$, note that $X \oplus Y \oplus Z : Q_8 \to \SO(3)$ is nothing but the image of $V : Q_8 \to \SU(2) \cong \Spin(3)$ under the canonical map $\Spin(3) \to \SO(3)$. The group $\Spin(3)$ is unusual among Spin groups in the following way. The stable class $\frac{p_1}2 \in \H^4(B\Spin(3);\bZ)$, pulled back from the generator of $\H^4(B\Spin(\infty);\bZ)$, does not generate $\H^4(B\Spin(3);\bZ) \cong \bZ$, whereas $\frac{p_1}2$ generates $\H^4(B\Spin(n);\bZ)$ for $n\geq 5$. Rather, $\frac{p_1}2 \in \H^4(B\Spin(3);\bZ)$ is twice the generator. To see this, note that $\H^4(B\Spin(4);\bZ) \cong \bZ^2$ and that the restriction maps $\H^4(B\Spin(5);\bZ) \to \H^4(B\Spin(4);\bZ) \to \H^4(B\Spin(3);\bZ)$ are the diagonal embedding followed by addition. On the other hand, think of $\Spin(3)$ as $\SU(2)$: $\H^4(B\SU(2);\bZ)$ is generated by $c_2$. After checking signs, one finds that $\frac{p_1}2 = -2 c_2$ as classes in $\H^4(B\Spin(3);\bZ) = \H^4(B\SU(2);\bZ)$. Restricting to $Q_8$ gives $$\frac{p_1}2(X \oplus Y \oplus Z) = -2 c_2(V).$$

All together, we have
$$ \frac{p_1}2(W) + \frac{p_1}2(X \oplus Y \oplus Z) = -3 c_2(V) \in \H^4(Q_8; \bZ).$$
Multiplying both sides by $-3$ gives \eqref{eq:viriathus} as desired.
\end{proof}

We conclude with some calculations that show (consistent with calculations in \cite{CHVZ}) that the anomaly $\alpha_L$ does not agree with $-\frac{p_1}{2}$ for $L$ of type $A_3^8$ or $A_4^6$.

\medskip

For $L = A_4^6$, we calculate the restrictions of $\frac{p_1}2(\Leech \otimes \bR)$ and $c_2(\mathfrak{b}^+) - c_2(\mathfrak{a}^+)$ to the elements $3\rA$ and $4\rA$ in $U'_L = S_3 \times 4$:
  $$ \begin{array}{cc|ccc|ccc}
  \text{Name} & S_3 \times 4 & \Leech & \mathfrak{b}^+ & \mathfrak{a}^+ & \frac{p_1}2(\Leech) & c_2(\mathfrak{b}^+) & c_2(\mathfrak{a}^+) \\[2pt] 
   \hline &&&&&& \\[-10pt]
  3\rA & (3^1, 0) & 3^8 & \lambda \oplus \bar\lambda \oplus 1 & 1 \oplus 1 & 1 & -1 & 0
  \\
  4\rA & (1^3, 1) & 4^6 & -i \oplus -i \oplus i & -1 \oplus i & 1 & -1 & 2
  \end{array} $$
Here $\lambda = \exp(2\pi i /3)$. We have recorded the eigenvalues of each element in $\mathfrak{b}^+$ and $\mathfrak{a}^+$ and their Frame shapes in the permutation representation on the nodes of the $A_4^6$ Dynkin diagram (equivalently the Leech representation). We also indicate each element as a pair in $S_3 \times \bZ/4$, where the class in $S_3$ is indicated by its Frame shape and we write $\bZ/4$ additively. The calculation of $\frac{p_1}2(\Leech)$ is from Theorem~\ref{restrictions theorem} and the calculations of the $c_2$s are routine; we record the values as multiples of the square generator of $\H^4(\bZ/3;\bZ)$ or $\H^4(\bZ/4;\bZ)$. In particular, whereas when restricting to element $3\rA$, we do find $-\frac{p_1}2(\Leech \otimes \bR) = c_2(\mathfrak{b}^+) - c_2(\mathfrak{a}^+)$ as in Theorem~\ref{thm umbral consistency check}, when restricting to $4\rA$ we find the opposite sign.
It follows that the restrictions  to element $12\rA = (3^1,1)$ of $\frac{p_1}2(\Leech \otimes \bR)$ and $c_2(\mathfrak{b}^+) - c_2(\mathfrak{a}^+)$ differ by a factor of $\epsilon(L) = 5$.

\medskip

The final case studied in \cite{UmbralAtype} and not covered by Theorem~\ref{thm umbral consistency check} is $L = A_3^8$.  The calculations in~\cite{CHVZ} suggest that  \eqref{eq:not-always} should hold with $\epsilon(L) = 1$, at least after restricting to cyclic subgroups.  Note, however, that the Niemeier lattice $L$ of type $A_3^8$ is one of the three Niemeier lattices for which classes in $\H^4(U_L;\bZ)$ are not determined by their restrictions to cyclic subgroups \cite{CLW}; this remains true for the maximal subgroup $U'_L \cong \mathrm{SL}(2,7)$ studied in \cite{UmbralAtype}. 

It is easy to check that $c_1(\mathfrak{a}^+) = c_1(\mathfrak{b}^+)$, so to test \eqref{eq:not-always}, we must calculate 
\begin{equation}
\label{eq:must-calc}
\frac{p_1}2(\Leech\otimes \bR)|_{U'_L} \text{ and } c_2(\mathfrak{b}^+) - c_2(\mathfrak{a}^+).
\end{equation}
Since $\H^4(\mathrm{SL}(2,7);\bZ) \cong \bZ/48$ has only $2$- and $3$-primary torsion, we may compare these classes by comparing their restrictions to the $2$- and $3$-Sylow subgroups of $\SL(2,7)$.  The $3$-Sylow is cyclic of order 3, its generator acts on $\Leech \otimes \bR$ with Frame shape $1^6 3^6$, and on $\mathfrak{b}^+$ and $\mathfrak{a}^+$ with Frame shapes $3^1$ and $1^1 3^1$ respectively; thus the $3$-primary parts of \eqref{eq:must-calc} both vanish.  It remains to compare the $2$-primary parts.

The $2$-Sylow in $\SL(2,7)$ is isomorphic to $2D_8$, let us index its complex representations just as in \S\ref{sec:second-chern-classes}.  As with all umbral groups, the action of $U'_L \cong \mathrm{SL}(2,7)$ on $\Leech\otimes \bR$ is a permutation representation on the nodes of the Dynkin diagram; after restricting to the $2$-Sylow $2D_8 \subset \mathrm{SL}(2,7)$, it is the sum of the regular representation of $2D_8$ with the regular representation of $D_8$. This $24$-dimensional real representation is the underlying real representation of a complex representation $V$ that splits over $2D_8$ as
\[
V = V_0 \oplus (V_1 \oplus V_2 \oplus V_3) \oplus 2V_4 \oplus V_5 \oplus V_6
\]
and so $\frac{p_1}2(\Leech \otimes \bR|_{2D_8}) = -c_2(V).$  The representations $\mathfrak{b}^+$ and $\mathfrak{a}^+$ decompose as
$$ \mathfrak{b}^+|_{2D_8} = V_5 \oplus V_6 
, \quad\quad
\mathfrak{a}^+|_{2D_8} = V_1 \oplus V_2 \oplus V_3.$$
As observed in the proof of Lemma~\ref{restricting to 2D8}, $c_2(V_1 \oplus V_2 \oplus V_3)$ has order $2$ in $\H^4(2D_8;\bZ)$. Thus to compare $\frac{p_1}2(\Leech \otimes \bR|_{2D_8})$ with $c_2(\mathfrak{b}^+) - c_2(\mathfrak{a}^+)$, it suffices to compare $\pm c_2(2V_4 \oplus V_5 \oplus V_6)$ with $c_2(V_5 \oplus V_6)$. According to Lemma~\ref{restricting to 2D8},
\[
c_2(2V_4 \oplus V_5 \oplus V_6) = 2c_2(V_6), \quad\quad c_2(V_5 \oplus V_6) = 10c_2(V_6).
\]
which certainly differ, even up to sign. Rather, we find that, for $L = A_3^8$,
$$ \alpha_L|_{U'_L} = 5 \frac{p_1}2(\Leech \otimes \bR)|_{U'_L},$$
suggesting that $\epsilon(A_3^8) = 5$. (The above equation holds whether $c_2(V_1 \oplus V_2 \oplus V_3) = 0$ or $c_2(V_1 \oplus V_2 \oplus V_3) = 8 c_2(V_6)$.)

But restrictions to cyclic groups can only determine a class in $\H^4(2D_8;\bZ) \cong \bZ/16$ modulo $8$, and so we confirm the calculation of \cite{CHVZ} that, for $L = A_3^8$, the multipliers in umbral moonshine agree with those that would be given if the anomaly were $+\frac{p_1}2(\Leech \otimes\bR)|_{U_L}$, i.e.\ if we had $\epsilon(L) = 1$.

To conclude, we remark that these cohomological methods do explain why in the case $L = A_3^8$, the authors of \cite{UmbralAtype} were unable to find a ``free field'' realization of the entire umbral group $U_L \cong 2^4:\mathrm{GL}(3,2)$ reproducing the umbral moonshine functions. Indeed, all Chern classes in the 2-primary part of $\H^4(U_L;\bZ)$ have order four or less, but the previous calculations show that the anomaly for the $A_3^8$ moonshine of \cite{UmbralAtype} has order $8$.


\newcommand{\etalchar}[1]{$^{#1}$}

\end{document}